\documentclass[11pt]{amsart}
\usepackage{amsmath,amssymb}
\usepackage{geometry}
\usepackage{enumitem}
\usepackage[hidelinks]{hyperref}
\usepackage{tikz}

\newtheorem{theorem}{Theorem}[section]
\newtheorem{lemma}[theorem]{Lemma}
\newtheorem{proposition}[theorem]{Proposition}
\newtheorem{corollary}[theorem]{Corollary}

\theoremstyle{definition}
\newtheorem{definition}[theorem]{Definition}
\newtheorem{remark}[theorem]{Remark}
\newtheorem{example}[theorem]{Example}

\newcommand{\K}{\mathbb K}
\newcommand{\cC}{\mathcal C}
\newcommand{\reg}{\operatorname{reg}}
\newcommand{\pd}{\operatorname{pd}}
\newcommand{\depth}{\operatorname{depth}}
\newcommand{\height}{\operatorname{ht}}
\newcommand{\bight}{\operatorname{bight}}
\newcommand{\Min}{\operatorname{Min}}
\newcommand{\Ass}{\operatorname{Ass}}
\newcommand{\N}{N}
\newcommand{\adeg}{\operatorname{adeg}}
\newcommand{\br}[2]{[\,#1,#2\,]}

\begin{document}

\title[Binomial edge ideals of bipartite complements of even cycles]
{Binomial Edge Ideals of Bipartite Complements of Even Cycles}

	\author[S.A. Rather]{Shahnawaz Ahmad Rather}
\address{Department of Mathematics \\
	Government Degree College  \\
	Baramulla \\
	Jammu and Kashmir 193101, India}
\email{nawaaz315@gmail.com}

\author{S. Pirzada}
\address{Department of Mathematics \\
	University of Kashmir, Hazratbal, \\
	Jammu and Kashmir 190006, India}
\email{pirzadasd@kashmiruniversity.net}
\author{M. Aijaz}
\address{Department of Mathematics \\
	Government Degree College for Women  \\
	Baramulla \\
	Jammu and Kashmir 193101, India}
\email{ahaijaz99@gmail.com}

\begin{abstract}
	Let $G_n$ be the bipartite complement of the even cycle $C_{2n}$,
	and let $J_{G_n}$ be its binomial edge ideal. For $n\geq5$, we study
	the interaction between the combinatorial structure of $G_n$ and the
	algebraic invariants of $S_{G_n}/J_{G_n}$. Our main combinatorial
	ingredient is a classification of the subsets of $V(G_n)$ having the
	cut point property, obtained through an analysis of disconnected
	induced subgraphs of $G_n$. We use this classification to describe the minimal primes and to study
	dimension and degree-theoretic invariants of $J_{G_n}$ and
	$S_{G_n}/J_{G_n}$. We also study local Vasconcelos numbers,
	graded Betti numbers and the Hilbert series, and obtain information
	on projective dimension and depth. Finally, we analyze induced paths
	in $G_n$ and derive corresponding bounds for the Castelnuovo--Mumford
	regularity.
\end{abstract}
\subjclass[2020]{13C15, 13D02, 05E40}
\keywords{Binomial edge ideal, bipartite complement, cut point property,
	minimal prime, projective dimension, regularity, Vasconcelos number}

\maketitle
\section{Introduction}\label{sec:introduction}

Let $G$ be a finite simple graph on the vertex set
$[m]=\{1,\ldots,m\}$, and let
\[
S_G=\K[x_1,\ldots,x_m,y_1,\ldots,y_m].
\]
The \emph{binomial edge ideal} of $G$ is
\[
J_G=(x_i y_j-x_j y_i:\{i,j\}\in E(G),\ i<j).
\]
Binomial edge ideals were introduced independently by Herzog, Hibi,
Hreinsd\'ottir, Kahle, and Rauh~\cite{HerzogEtAl} and by
Ohtani~\cite{Ohtani}. A central problem is to relate the combinatorial
structure of $G$ to the algebraic and homological properties of
$S_G/J_G$.

The minimal primes of $J_G$ are indexed by subsets of $V(G)$ having
the cut point property~\cite{HerzogEtAl}. Thus their study leads
naturally to understanding how vertex deletion affects the connected
components of $G$. A useful setting for this problem is provided by
structured families of graphs. For instance, Kumar, Pomeroy, and
Tran studied binomial edge ideals of crown graphs~\cite{Crown}, while
Hibi and Saeedi Madani investigated Cohen--Macaulayness, unmixedness,
dimension, and depth for Cameron--Walker graphs~\cite{CameronWalker}.
Betti numbers and regularity for several classes of graphs were
studied by Zafar and Zahid~\cite{Cycles}. Related work on
$v$-numbers includes that of Dey, Jayanthan, and Saha, who studied
this invariant for several classes of graphs, including paths and
cycles~\cite{VNumberClasses}.

In this paper we consider the bipartite complement of an even cycle.
Let
\[
X=\{u_0,\ldots,u_{n-1}\},
~
Y=\{v_0,\ldots,v_{n-1}\},
\]
and let $F_n\cong C_{2n}$ be the cycle with
\[
E(F_n)
=
\bigl\{
\{u_i,v_i\},\{u_i,v_{i-1}\}:i\in\mathbb Z_n
\bigr\},
\]
where $\mathbb Z_n=\mathbb Z/n\mathbb Z$, and all indices are read
modulo $n$.
We define
\[
G_n=K_{X,Y}\setminus E(F_n).
\]
Thus $G_n$ is obtained from $K_{n,n}$ by deleting two perfect
matchings whose union is a single even cycle. Throughout the paper
we assume $n\geq5$.

The cyclic arrangement of the missing edges makes the cut point
structure of $G_n$ substantially richer than that of the crown
graph. Our main combinatorial step is to analyze disconnected induced
subgraphs of $G_n$ through the deleted cycle $F_n$. Since every vertex
has degree two in $F_n$, such disconnections force a small number of
rigid configurations. This leads to a complete classification of the
cut point sets of $G_n$ into seven mutually exclusive types.

This classification is the organizing result of the paper. It yields
a description of all minimal primes of $J_{G_n}$ and allows us to
determine the dimensions and heights of the corresponding
minimal-prime quotients, the height and big height of $J_{G_n}$,
and the dimension, multiplicity, and arithmetic degree of
$S_{G_n}/J_{G_n}$. We also determine the
local Vasconcelos numbers, using separators and connected domination
in suitable induced subgraphs.

We further investigate the graded and homological structure of
$S_{G_n}/J_{G_n}$. The bipartite structure of $G_n$ restricts the
linear strand, while induced subgraphs on four vertices control
$\beta_{2,4}$. We also obtain a finite expression for the Hilbert
series and bounds for projective dimension and depth. Finally, we
study longest induced paths in $G_n$ and use them to obtain bounds
for the Castelnuovo--Mumford regularity.

The paper is organized as follows.
Section~\ref{sec:preliminaries} recalls the required preliminaries.
Section~\ref{sec:family} introduces $G_n$ and studies connectivity
of its induced subgraphs.
Section~\ref{sec:cutpoints} classifies the sets with the cut point
property and derives the resulting dimension, height, multiplicity,
and arithmetic-degree formulas.
Section~\ref{sec:graded} studies graded Betti numbers, the Hilbert
series, projective dimension, and depth.
Section~\ref{sec:vnumber} concerns connected domination and local
$v$-numbers.
Finally, Section~\ref{sec:regularity} studies induced paths and
regularity.

\section{Preliminaries}\label{sec:preliminaries}

Throughout the paper, all graphs are finite and simple, and $\K$ is
an arbitrary field. We recall the notation and basic facts used below.

\subsection{Graph-theoretic notation}

A graph $G$ is \emph{bipartite} if \( V(G)=X\sqcup Y \)
and every edge has one endpoint in $X$ and the other in $Y$. The pair
$(X,Y)$ is called a bipartition of $G$.

For $A\subseteq V(G)$, we denote by $G[A]$ the induced subgraph of
$G$ on $A$, and write \( G\setminus A=G[V(G)\setminus A]. \)
For a vertex $w$, we abbreviate $G\setminus\{w\}$ to $G\setminus w$.
The open neighbourhood and degree of $w$ are
\[
N_G(w)=\{z\in V(G):\{w,z\}\in E(G)\},
~
\deg_G(w)=|N_G(w)|.
\]
A vertex $w$ is a \emph{cut vertex} if $G\setminus w$ has more
connected components than $G$.

We denote by $K_r$, $P_r$, and $C_r$ the complete graph, path, and
cycle on $r$ vertices, respectively; thus $P_r$ has length $r-1$.
The disjoint union of graphs is denoted by $\sqcup$, so, for example,
$2P_3=P_3\sqcup P_3$.

For disjoint sets $X$ and $Y$, let $K_{X,Y}$ denote the complete
bipartite graph with bipartition $X\sqcup Y$. A \emph{matching} is a
set of pairwise vertex-disjoint edges, and it is \emph{perfect} if
every vertex is incident with exactly one edge of the matching.

If $F$ is a bipartite graph with specified bipartition $X\sqcup Y$,
its \emph{bipartite complement} is the graph on $X\sqcup Y$ with
edge set \( E(K_{X,Y})\setminus E(F). \)

A vertex is \emph{simplicial} if its neighbours form a clique; a
vertex that is not simplicial is called \emph{internal}.

\subsection{Binomial edge ideals and cut point sets}

For $T\subseteq V(G)$, put \( \overline T=V(G)\setminus T, \)
and let $c_G(T)$ denote the number of connected components of
$G[\overline T]$, with $c_G(V(G))=0$. Let
\( G_1,\ldots,G_{c_G(T)} \)
be these components, and let $\widetilde G_j$ denote the complete
graph on $V(G_j)$. Set
\[
P_T(G)
=
(x_i,y_i:i\in T)
+
\sum_{j=1}^{c_G(T)}J_{\widetilde G_j}.
\]
Each $P_T(G)$ is a prime ideal containing $J_G$, and
\cite[Theorem~3.2]{HerzogEtAl} gives
\[
J_G=\bigcap_{T\subseteq V(G)}P_T(G).
\]
In particular, $J_G$ is radical.

A subset $T\subseteq V(G)$ has the \emph{cut point property} if
$T=\varnothing$, or if every $i\in T$ is a cut vertex of
\( G[\overline T\cup\{i\}]. \)
Equivalently, a nonempty set $T$ has the cut point property if and
only if
\[
c_G(T\setminus\{i\})<c_G(T)
~\text{for every }i\in T.
\]
We write
\[
\cC(G)
=
\{T\subseteq V(G):T\text{ has the cut point property}\}.
\]

By \cite[Corollary~3.9]{HerzogEtAl}, applied to the connected
components when necessary, the minimal primes of $J_G$ are precisely
the ideals $P_T(G)$ with $T\in\cC(G)$. Hence
\[
J_G=\bigcap_{T\in\cC(G)}P_T(G)
\]
is an irredundant prime decomposition, and
\[
\Ass(S_G/J_G)
=
\Min(S_G/J_G)
=
\{P_T(G):T\in\cC(G)\}.
\]

Moreover, \cite[Lemma~3.1]{HerzogEtAl} gives
\begin{align}
	\height P_T(G)
	&=
	|V(G)|+|T|-c_G(T),
	\label{eq:height-prime}\\
	\dim(S_G/P_T(G))
	&=
	|V(G)|-|T|+c_G(T).
	\label{eq:dimension-prime}
\end{align}
Consequently,
\[
\dim(S_G/J_G)
=
\max_{T\in\cC(G)}
\bigl\{|V(G)|-|T|+c_G(T)\bigr\}.
\]

The following reformulation of the cut point property will be used
repeatedly.

\begin{lemma}\label{lem:component-criterion}
	Let $T\subseteq V(G)$ be nonempty, and let
	$H_1,\ldots,H_r$ be the connected components of $G[\overline T]$.
	Then $T\in\cC(G)$ if and only if every $v\in T$ has neighbours in at
	least two of $H_1,\ldots,H_r$.
\end{lemma}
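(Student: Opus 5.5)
The plan is to compare, for each $v\in T$, the components of $G[\overline T\cup\{v\}]$ with those of $G[\overline T]$, and then apply the equivalent form of the cut point property: a nonempty $T$ lies in $\cC(G)$ if and only if $c_G(T\setminus\{v\})<c_G(T)$ for every $v\in T$. Since $G[\overline{T\setminus\{v\}}]=G[\overline T\cup\{v\}]$, it suffices to prove the following counting identity for a fixed $v\in T$. Let $k$ be the number of indices $j$ with $N_G(v)\cap V(H_j)\neq\varnothing$. Then
\[
c_G(T\setminus\{v\})=r-k+1 .
\]

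To prove the identity, look at how adding $v$ back changes the components. The graph $G[\overline T\cup\{v\}]$ has vertex set $V(H_1)\cup\cdots\cup V(H_r)\cup\{v\}$, and every edge of it not involving $v$ already lies inside some $H_j$. The only new edges join $v$ to neighbours of $v$ in $\overline T$. The $k$ components $H_j$ that meet $N_G(v)$ therefore merge with $v$ into a single connected component. The remaining $r-k$ components are untouched, because no edge joins them to $v$ or to each other. This gives $r-k+1$ components; the formula also covers $k=0$, where $v$ becomes an isolated vertex. One small point is the convention $c_G(V(G))=0$: if $T=V(G)$ then $r=0$ and $k=0$, the formula gives $1$, and that is correct since $G[\{v\}]$ has one component.

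With the identity in hand, the condition $c_G(T\setminus\{v\})<c_G(T)=r$ becomes $r-k+1<r$, that is, $k\ge 2$. This says exactly that $v$ has neighbours in at least two of $H_1,\ldots,H_r$. Requiring it for every $v\in T$ gives both directions of Lemma~\ref{lem:component-criterion} at once.

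There is no real obstacle here. The only step needing care is the merging claim: one must check that the components touching $N_G(v)$ fuse into one component through $v$, and that no other component joins them, because all edges between vertices of $\overline T$ are already accounted for inside the $H_j$.
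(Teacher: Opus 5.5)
Your proof is correct and is essentially the paper's argument: restoring $v$ merges the $k$ components meeting $N_G(v)$ into one component, so $c_G(T\setminus\{v\})=c_G(T)-k+1$. The strict inequality $c_G(T\setminus\{v\})<c_G(T)$ then reduces to $k\geq 2$. Your unified handling of $k=0$ and your explicit check of the convention $c_G(V(G))=0$ are small but welcome tidy-ups of the paper's case split.
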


\begin{proof}
	Fix $v\in T$, and suppose that $v$ has neighbours in exactly $s$ of
	the components of $G[\overline T]$. If $s\geq1$, restoring $v$
	merges these $s$ components into one, and therefore
	\[
	c_G(T\setminus\{v\})=c_G(T)-s+1.
	\]
	If $s=0$, restoring $v$ creates a new isolated component. Hence
	\( c_G(T)>c_G(T\setminus\{v\}) \)
	if and only if $s\geq2$. Applying this to every $v\in T$ proves the
	assertion.
\end{proof}

We will also use the column multigrading on $S_G$ defined by
\[
\deg x_i=\deg y_i=e_i\in\mathbb N^{V(G)}.
\]
For $\alpha\in\mathbb N^{V(G)}$, we write
\[
\operatorname{supp}(\alpha)
=
\{i\in V(G):\alpha_i>0\}.
\]

\subsection{Algebraic invariants}

Let $R=S_G/J_G$, and write its minimal graded free resolution as
\[
0\longrightarrow
\bigoplus_jS_G(-j)^{\beta_{p,j}(R)}
\longrightarrow\cdots\longrightarrow
\bigoplus_jS_G(-j)^{\beta_{0,j}(R)}
\longrightarrow R\longrightarrow0.
\]
The integers $\beta_{i,j}(R)$ are the graded Betti numbers. We use
\[
\pd R
=
\max\{i:\beta_{i,j}(R)\neq0\text{ for some }j\},
\qquad
\reg R
=
\max\{j-i:\beta_{i,j}(R)\neq0\}.
\]
Depth is taken at the homogeneous maximal ideal. Since
$\dim S_G=2|V(G)|$, the Auslander--Buchsbaum formula gives
\[
\depth R+\pd R=2|V(G)|.
\]
Unless stated otherwise, $\pd$, $\depth$, and $\reg$ refer to the
quotient $S_G/J_G$; when the ideal itself is considered, we write
$\reg(J_G)$ explicitly.

For an ideal $I\subseteq S_G$, its height and big height are
\[
\height(I)
=
\min\{\height(P):P\in\Min(I)\},
~
\bight(I)
=
\max\{\height(P):P\in\Min(I)\}.
\]
An ideal is \emph{unmixed} if all its associated primes have the same
height.

For a standard graded finitely generated $\K$-algebra
$R=\bigoplus_{d\geq0}R_d$, its Hilbert series is
\[
H_R(t)
=
\sum_{d\geq0}\dim_{\K}(R_d)t^d.
\]
If $d=\dim R$, then the Hilbert polynomial of $R$ has leading term
\[
\frac{e(R)}{(d-1)!}q^{d-1},
\]
where $e(R)$ denotes the multiplicity of $R$.

For the reduced ring $R=S_G/J_G$, its arithmetic degree is
\[
\adeg(R)
=
\sum_{P\in\Min(J_G)}e(S_G/P).
\]
\section{The graph \texorpdfstring{$G_n$}{G\_n} and its induced subgraphs}
\label{sec:family}

Fix an integer $n\geq2$.  Let \( X=\{u_0,u_1,\ldots,u_{n-1}\} \) and \( Y=\{v_0,v_1,\ldots,v_{n-1}\} \)
where all subscripts are taken modulo $n$.  Let $F_n$ be the cycle on
$X\sqcup Y$ whose edge set is \(  E(F_n)=\bigl\{\{u_i,v_i\},\{u_i,v_{i-1}\}:i\in\mathbb Z_n\bigr\}. \)
Thus $F_n\cong C_{2n}$.  We define \( G_n=K_{X,Y}\setminus E(F_n). \)
Equivalently,
\begin{equation}\label{eq:adjacency}
 \{u_i,v_j\}\in E(G_n)
 ~\text{if and only if}~
 j\notin\{i,i-1\}.
\end{equation}
In particular, $G_n$ is an $(n-2)$-regular bipartite graph on $2n$
vertices with \(  |E(G_n)|=n(n-2). \)

\begin{remark}
	The small cases are exceptional:
	$G_2$ is edgeless, $G_3\cong3K_2$, and $G_4\cong C_8$.
\end{remark}
Unless otherwise stated, we assume throughout the remainder
of this paper that $n\geq5$ and write $S=S_{G_n}$ and
$J=J_{G_n}$. All indices on $u_i$ and $v_i$ are read modulo $n$.

\subsection{Connectivity of induced subgraphs}\label{sec:connectivity}
For $U\subseteq V(G_n)$, write \( U_X=U\cap X,~ U_Y=U\cap Y, \)
and put $p=|U_X|$ and $q=|U_Y|$.  The graph $G_n[U]$ is the bipartite
complement of $F_n[U]$ inside $K_{p,q}$.

\begin{lemma}\label{lem:induced-connectivity}
Let $U\subsetneq V(G_n)$ with $|U_X|\geq3$ and $|U_Y|\geq3$.  If
$G_n[U]$ is disconnected, then \( |U_X|=|U_Y|=3 \) and \( F_n[U]\cong P_6 \) or 
\( F_n[U]\cong 2P_3. \)
In particular, if $|U_X|,|U_Y|\geq3$ and $|U|\geq7$, then $G_n[U]$
is connected.
\end{lemma}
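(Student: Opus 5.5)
Write $p=|U_X|$, $q=|U_Y|$. The argument rests on one fact: $G_n[U]$ is the bipartite complement of $F_n[U]$, and every vertex of $F_n[U]$ has degree at most $2$. Hence every $u\in U_X$ is adjacent in $G_n[U]$ to at least $q-2\geq1$ vertices of $U_Y$, and symmetrically every $v\in U_Y$ has at least $p-2\ge1$ neighbours in $U_X$. So no component of $G_n[U]$ lies inside one side, and every component meets both $U_X$ and $U_Y$.

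Suppose $G_n[U]$ is disconnected, and let $A$ be one component, with $A_X=A\cap U_X$, $A_Y=A\cap U_Y$. Put $B=U\setminus A$. By the first paragraph, $A_X,A_Y,B_X,B_Y$ are all nonempty. There are no $G_n$-edges between $A_X$ and $B_Y$ or between $B_X$ and $A_Y$. Therefore every pair in $A_X\times B_Y$ and in $B_X\times A_Y$ is an edge of $F_n$, so $F_n[U]$ contains the complete bipartite graphs $K_{A_X,B_Y}$ and $K_{B_X,A_Y}$.

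Since $F_n$ is a cycle of length $2n\geq10$, it contains no $C_4$. So $K_{a,b}\subseteq F_n$ forces $\min(a,b)=1$ and $\max(a,b)\le2$. This gives $|A_X|+|B_Y|\le3$ and $|B_X|+|A_Y|\le3$. Adding, $p+q\le6$, and with $p,q\geq3$ we get $p=q=3$. Moreover each of the two complete bipartite pieces is a $K_{1,2}=P_3$. These two $P_3$'s are vertex-disjoint, and they cover all six vertices of $U$ and use four edges of $F_n[U]$.

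It remains to identify $F_n[U]$. It is an induced subgraph of a cycle on $6<2n$ vertices, so it is a linear forest. It contains the spanning subgraph $2P_3$, and any extra edges must join the two paths. An extra edge cannot join two internal vertices, since those already have degree $2$. So extra edges join endpoints, and any extra edge is one of the four $X$–$Y$ endpoint pairs. One extra edge yields $P_6$. Two extra edges would create a $6$-cycle or give some vertex degree $3$, which is impossible since $F_n[U]$ is acyclic with maximum degree $2$. Hence $F_n[U]\cong 2P_3$ or $P_6$.

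The final statement follows directly: if $|U|\ge7$ then $p+q\ge7$, contradicting $p+q\le6$. The main obstacle is the $K_{2,2}$-freeness bookkeeping, which is where $n\ge5$ (so that $2n>4$, and $2n>6$ for acyclicity) is used.
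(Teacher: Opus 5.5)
Your proof is correct. It rests on the same facts as the paper's proof: cross pairs between components are edges of $F_n$, $F_n$ has maximum degree two and no $C_4$, and proper induced subgraphs of $F_n$ are linear forests. The difference is in how you split the components.

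\textbf{How the routes differ.} The paper works with all components $H_1,\ldots,H_r$. It first bounds $|X_i|,|Y_i|\le 2$. It then excludes $r\ge3$ by exhibiting a $6$-cycle, which is where it invokes $U\subsetneq V(G_n)$. Next it excludes $p=4$ via a $K_{2,2}$, and finally it fixes the distribution $(1,1),(2,2)$. You instead split $U$ into one component $A$ and the union $B$ of the others. This gives only two complete bipartite subgraphs, $K_{A_X,B_Y}$ and $K_{B_X,A_Y}$, each of which must be $K_{1,1}$ or $K_{1,2}$. The single inequality $p+q\le 6$ then follows at once. The case $r\ge 3$ never needs separate treatment: it reappears as two extra endpoint edges, which your final step excludes.

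\textbf{What each route buys.} Your bound $p+q\le6$ uses only that $F_n$ is $C_4$-free with maximum degree two, not that $U$ is proper. So your argument also shows directly that $G_n$ itself is connected, which the paper checks separately in Proposition~\ref{prop:vertex-connectivity}. The paper's route makes explicit that there are exactly two components, with part sizes $(1,1)$ and $(2,2)$. The same labelled configuration reappears in Proposition~\ref{prop:mixed-cutsets}(iii). You can recover it from your own equations. Since $|A_X|+|B_Y|=3=|A_Y|+|B_Y|$, every component $A$ has $|A_X|=|A_Y|$. Three components of type $(1,1)$ would produce two disjoint extra edges, which you ruled out.

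\textbf{Wording fix.} Your degree argument shows that an extra edge cannot be incident with an internal vertex at all. Say that, rather than only that it cannot join two internal vertices, since the stronger statement is what you need to conclude that extra edges join endpoints.
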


\begin{proof}
Put $H=G_n[U]$, $p=|U_X|$, and $q=|U_Y|$, and suppose
that $H$ is disconnected.  Since every
vertex of $F_n$ has degree two, every vertex of $U_X$ is adjacent in
$H$ to at least $q-2\geq1$ vertices of $U_Y$.  Similarly, every vertex
of $U_Y$ is adjacent in $H$ to at least $p-2\geq1$ vertices of $U_X$.
Thus $H$ has no isolated vertices, and every component of $H$ meets
both $U_X$ and $U_Y$.

Let $H_1,\ldots,H_r$ be the connected components of $H$, and set
\[
 X_i=V(H_i)\cap U_X,~ Y_i=V(H_i)\cap U_Y.
\]
If $i\neq j$, then there are no edges of $G_n$ between $X_i$ and
$Y_j$.  Hence every pair in $X_i\times Y_j$ is an edge of $F_n$.
Since every vertex has degree two in $F_n$, it follows that \( |X_i|\leq2 \) and \( |Y_i|\leq2 \) for every \( i \). We first show that $r=2$.  Suppose $r\geq3$.  For $x\in X_1$, all
vertices belonging to $Y_2\cup\cdots\cup Y_r$ are neighbours of $x$ in
$F_n$.  Therefore \( |Y_2|+\cdots+|Y_r|\leq2. \)
Since each $Y_i$ is nonempty and $r\geq3$, we obtain $r=3$ and
$|Y_2|=|Y_3|=1$.  Applying the same argument with the other components
shows that $|Y_1|=1$.  By symmetry, $|X_1|=|X_2|=|X_3|=1$.  The edges
of $F_n$ between distinct components then form
$K_{3,3}$ minus a perfect matching, which is a $6$-cycle.  This is
impossible because $U\subsetneq V(F_n)$ and every proper induced
subgraph of the cycle $F_n$ is a disjoint union of paths.  Hence $r=2$.

Since $r=2$ and each $X_i$ and $Y_i$ has at most two
vertices, we have $3\leq p,q\leq4$. Suppose now that $p=4$.  Since $|X_1|,|X_2|\leq2$, we must have
$|X_1|=|X_2|=2$.  Because $q\geq3$, one of $Y_1,Y_2$ has two vertices.
The edges of $F_n$ between that two-element set and the two vertices
of the opposite $X$-component form a copy of $K_{2,2}$, contradicting
the fact that $F_n$ has no $4$-cycle.  Thus $p\neq4$, and the same
argument gives $q\neq4$.  Consequently, $p=q=3$.

The distributions of the two parts among the two components must be \( (|X_1|,|Y_1|)=(1,1) \), \( (|X_2|,|Y_2|)=(2,2) \), after possibly interchanging $H_1$ and $H_2$.  Indeed, if the
one-element $X$-part and the one-element $Y$-part occurred in different
components, the cross edges of $F_n$ would contain a $K_{2,2}$.

Write \( X_1=\{a\} \), \( Y_1=\{b\} \), \( X_2=\{c,d\}, \) and \( Y_2=\{e,f\}. \) 
The edges of $F_n$ between the two components of $H$ are
$ae,af,bc,bd$, and hence form the two paths $e-a-f$ and $c-b-d$.
The vertices $a$ and $b$ already have degree two in $F_n[U]$.
Consequently, every additional edge must join a vertex of
$\{c,d\}$ to a vertex of $\{e,f\}$.

Since $F_n$ has maximum degree two, any two additional edges
would have to be disjoint. Together with the four forced edges,
they would form a $6$-cycle, which is impossible in a proper
induced subgraph of $F_n$. Thus there is at most one additional
edge. With no additional edge we obtain $F_n[U]\cong 2P_3$,
and with one additional edge we obtain $F_n[U]\cong P_6$;
see Figure~\ref{fig:induced-connectivity}.
Finally, if $|U_X|,|U_Y|\geq3$ and $|U|\geq7$, then
$G_n[U]$ cannot be disconnected, since the preceding
argument would give $|U|=3+3=6$. In Figure \ref{fig:induced-connectivity}, the upper and lower rows
represent $U_X=\{a,c,d\}$ and $U_Y=\{b,e,f\}$, respectively.
The components of $G_n[U]$ have vertex sets $\{a,b\}$ and
$\{c,d,e,f\}$. The solid edges are forced edges of $F_n$ between
these components. In panel~(b), the additional dashed edge $ce$
joins the two paths into $f-a-e-c-b-d$.
All displayed edges belong to $F_n$, not to $G_n$.
\end{proof}

\begin{figure}[htb]
	\centering
	\begin{tikzpicture}[
		vertex/.style={
			circle,
			draw,
			fill=white,
			inner sep=0pt,
			minimum size=5pt
		},
		every label/.style={font=\small},
		forced/.style={thick},
		additional/.style={thick,dashed}
		]
		
		\begin{scope}
			\node[vertex,label=above:$a$] (a) at (0,1.6) {};
			\node[vertex,label=above:$c$] (c) at (1.4,1.6) {};
			\node[vertex,label=above:$d$] (d) at (2.8,1.6) {};
			
			\node[vertex,label=below:$e$] (e) at (0,0) {};
			\node[vertex,label=below:$f$] (f) at (1.4,0) {};
			\node[vertex,label=below:$b$] (b) at (2.8,0) {};
			
			\draw[forced] (a)--(e);
			\draw[forced] (a)--(f);
			\draw[forced] (b)--(c);
			\draw[forced] (b)--(d);
			
			\node at (1.4,-0.85) {$(a)\quad F_n[U]\cong 2P_3$};
		\end{scope}
		
		\begin{scope}[xshift=5.5cm]
			\node[vertex,label=above:$a$] (a) at (0,1.6) {};
			\node[vertex,label=above:$c$] (c) at (1.4,1.6) {};
			\node[vertex,label=above:$d$] (d) at (2.8,1.6) {};
			
			\node[vertex,label=below:$f$] (f) at (0,0) {};
			\node[vertex,label=below:$e$] (e) at (1.4,0) {};
			\node[vertex,label=below:$b$] (b) at (2.8,0) {};
			
			\draw[forced] (a)--(e);
			\draw[forced] (a)--(f);
			\draw[forced] (b)--(c);
			\draw[forced] (b)--(d);
			\draw[additional] (c)--(e);
			
			\node at (1.4,-0.85) {$(b)\quad F_n[U]\cong P_6$};
		\end{scope}
		
	\end{tikzpicture}
	\caption{The two possible configurations of $F_n[U]$ in
		Lemma~\ref{lem:induced-connectivity}. }
	\label{fig:induced-connectivity}
\end{figure}

We now use Lemma~\ref{lem:induced-connectivity} to
determine the vertex connectivity of $G_n$.

\begin{proposition}\label{prop:vertex-connectivity}
	The vertex connectivity of $G_n$ is $\kappa(G_n)=n-2$.
\end{proposition}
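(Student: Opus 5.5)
The plan is to prove the two inequalities separately. For the upper bound $\kappa(G_n)\le n-2$, we use that $G_n$ is $(n-2)$-regular. Take any vertex $w$; the neighbourhood $N_{G_n}(w)$ has $n-2$ elements, and deleting it isolates $w$. The remaining graph still contains other vertices, for instance the other vertices on the same side as $w$. Hence $N_{G_n}(w)$ is a vertex cut of size $n-2$.

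For the lower bound, let $T$ be a vertex set with $|T|\le n-3$ and put $U=V(G_n)\setminus T$. We must show that $G_n[U]$ is connected. Since $|T|\le n-3$, we have $|U_X|\geq n-|T_X|$ and $|U_Y|\ge n-|T_Y|$. We split according to how many vertices of each side survive.

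\emph{Main case.} Suppose $|U_X|\ge3$ and $|U_Y|\ge3$. Here $|U|\ge 2n-(n-3)=n+3\ge 8\ge7$ because $n\ge5$. Lemma~\ref{lem:induced-connectivity} then gives that $G_n[U]$ is connected.

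\emph{Degenerate case.} Suppose instead that one side, say $U_X$, has at most $2$ vertices. Then $|T_X|\ge n-2>n-3\ge|T|$, which is impossible. So this case never occurs, and the same holds with the roles of $X$ and $Y$ swapped. We also need $U\subsetneq V(G_n)$ when applying the lemma; if $T=\varnothing$, we first apply the lemma to $V(G_n)$ minus one vertex. That subgraph is connected, and the deleted vertex has $n-2\ge3$ neighbours in it, so $G_n$ itself is connected.

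Finally, we check that $G_n$ is not complete, so that $\kappa$ is defined via vertex cuts. This is clear since $G_n$ is bipartite on $2n\ge10$ vertices. Together the two bounds give $\kappa(G_n)=n-2$.

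The only delicate point is the bookkeeping showing that a deletion set of size at most $n-3$ leaves at least three vertices on each side and at least seven vertices in total, so that Lemma~\ref{lem:induced-connectivity} applies. I expect this to be routine once $n\ge5$ is used.
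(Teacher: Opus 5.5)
Your proposal is correct and follows essentially the same route as the paper: the upper bound comes from deleting the $(n-2)$-element neighbourhood of a vertex, and the lower bound from noting that $|T|\le n-3$ leaves at least three vertices on each side and at least $n+3\ge 8$ in total, so Lemma~\ref{lem:induced-connectivity} applies. The only difference is cosmetic and concerns the case $T=\varnothing$: you get connectivity of $G_n$ by applying the lemma to $G_n$ minus one vertex and reattaching it, whereas the paper argues directly that any two vertices of $X$ have at least $n-4\ge1$ common neighbours.
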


\begin{proof}
	Let $T\subseteq V(G_n)$ with $|T|\leq n-3$, and put
	$U=V(G_n)\setminus T$. Then \( |U_X|\geq3 \), \( |U_Y|\geq3 \) and \( |U|\geq n+3\geq8. \) If $T=\varnothing$, then $G_n[U]=G_n$ is connected:
	any two vertices of $X$ have at least $n-4\geq1$
	common neighbours in $Y$, and every vertex of $Y$
	has positive degree. If $T\neq\varnothing$, connectivity
	follows from Lemma~\ref{lem:induced-connectivity}.
	Thus removing fewer than $n-2$ vertices cannot
	disconnect $G_n$, and $\kappa(G_n)\geq n-2$.
	
	Conversely, $|\N_{G_n}(u_i)|=n-2$ for every
	$i\in\mathbb Z_n$. Deleting $\N_{G_n}(u_i)$ isolates
	$u_i$ while leaving other vertices in the graph.
	Hence $\kappa(G_n)\leq n-2$, proving the equality.
\end{proof}

\section{Cut point sets and algebraic consequences}
\label{sec:cutpoints}

In this section, we give a complete description of $\cC(G_n)$.
We first characterize the nonempty sets with the cut point
property that are contained in one bipartition class, and then
consider those meeting both classes. Combining these results
yields the classification in
Theorem~\ref{thm:cut-point-classification}.

Throughout this section, $F_n[U]$ denotes the subgraph of the
deleted cycle induced by $U$.

\begin{proposition}\label{prop:one-sided-cutsets}
	Let $\varnothing\neq T\subseteq Y$, and put
	$Y'=Y\setminus T$. Then $T\in\cC(G_n)$ if and only if
	one of the following holds:
	\begin{enumerate}[label=\textup{(\roman*)}]
		\item $Y'=\varnothing$;
		\item $Y'=\{v_i\}$ for some $i\in\mathbb Z_n$;
		\item $Y'=\{v_i,v_{i+1}\}$ for some $i\in\mathbb Z_n$.
	\end{enumerate}
	The analogous statement holds for
	$\varnothing\neq T\subseteq X$ after interchanging
	$X$ and $Y$.
\end{proposition}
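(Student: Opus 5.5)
We use the component criterion of Lemma~\ref{lem:component-criterion} throughout. The surviving graph is $H=G_n[X\cup Y']$. Every vertex of $T\subseteq Y$ has all its neighbours in $X$, so we must understand how the components of $H$ partition $X$.

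\emph{The three configurations have the cut point property.}
\begin{enumerate}[label=\textup{(\roman*)}]
\item If $Y'=\varnothing$, then $H$ consists of the $n$ isolated vertices of $X$. Each $v_j\in T$ has $n-2\geq3$ neighbours in $X$, and these lie in distinct components.
\item If $Y'=\{v_i\}$, the components of $H$ are the star on $\{v_i\}\cup(X\setminus\{u_i,u_{i+1}\})$ together with the isolated vertices $u_i$ and $u_{i+1}$. Take $v_j\in T$, so $j\neq i$. Then $v_j$ is adjacent to all of $X$ except $u_j,u_{j+1}$. Since $j\neq i$, at least one of $u_i,u_{i+1}$ is a neighbour of $v_j$. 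Moreover $v_j$ has neighbours in the star, because $|X\setminus\{u_i,u_{i+1},u_j,u_{j+1}\}|\geq n-4\geq1$. So $v_j$ meets at least two components.
\item If $Y'=\{v_i,v_{i+1}\}$, the two stars share the vertices of $X\setminus\{u_i,u_{i+1},u_{i+2}\}$, so they form one component. The remaining components are the isolated vertex $u_{i+1}$ and the vertices $u_i,u_{i+2}$. Each of $u_i$ and $u_{i+2}$ is adjacent to exactly one of $v_i,v_{i+1}$, so both lie in the big component. Hence $H$ has exactly two components, the big one and $\{u_{i+1}\}$. A vertex $v_j\in T$ has $j\notin\{i,i+1\}$, so $u_{i+1}\notin\{u_j,u_{j+1}\}$ and $v_j$ is adjacent to $u_{i+1}$. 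It is also adjacent to the big component, since $n-2\geq 3$ neighbours cannot all equal $u_{i+1}$.
\end{enumerate}

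\emph{No other $T$ has the cut point property.} Suppose $|Y'|\geq2$ and $Y'$ is not of the form $\{v_i,v_{i+1}\}$. We show that $H$ is connected. Then every $v\in T$ sees only one component, so $T\notin\cC(G_n)$. When $|Y'|\geq3$ and $|X|=n\geq5$, Lemma~\ref{lem:induced-connectivity} gives connectivity directly, provided $U=X\cup Y'\neq V(G_n)$, which holds since $T\neq\varnothing$. The bound $|U|\geq 8\geq7$ is automatic. If $Y'=\{v_i,v_j\}$ with $v_i,v_j$ non-consecutive, then the sets $\{u_i,u_{i+1}\}$ and $\{u_j,u_{j+1}\}$ are disjoint. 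In this case every $u\in X$ is adjacent to at least one of $v_i,v_j$, and the two stars share $n-4\geq1$ vertices of $X$, so $H$ is connected.

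The case $Y'=\{v_i,v_{i+1}\}$ is the only delicate point. It must not be mistaken for connected: the vertex $u_{i+1}$ is the shared $F_n$-neighbour of $v_i$ and $v_{i+1}$, so it stays isolated. The statement for $T\subseteq X$ follows by the symmetry $u_i\leftrightarrow v_{-i}$, which interchanges $X$ and $Y$ and preserves $E(F_n)$.
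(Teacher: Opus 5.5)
Your proof is correct and follows essentially the same route as the paper: you verify the three configurations directly with the component criterion, and for necessity you bound $|Y'|\leq 2$ and show that a non-consecutive pair leaves $G_n[X\cup Y']$ connected. The only difference is that you invoke Lemma~\ref{lem:induced-connectivity} directly for $|Y'|\geq3$, whereas the paper goes through $\kappa(G_n)=n-2$ from Proposition~\ref{prop:vertex-connectivity}; that proposition itself rests on the same lemma.
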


\begin{proof}
	Put $U=V(G_n)\setminus T=X\cup Y'$ and $q=|Y'|$.
	Suppose first that $T\in\cC(G_n)$.
	Since $T\neq\varnothing$, the graph $G_n[U]$ is
	disconnected. Proposition~\ref{prop:vertex-connectivity}
	therefore gives $|T|\geq n-2$, and hence
	$q=n-|T|\leq2$.
	
	If $q=0$ or $q=1$, then condition \textup{(i)} or
	\textup{(ii)} holds. Suppose $q=2$, and write
	$Y'=\{v_i,v_j\}$.
	The vertices $v_i$ and $v_j$ have at least $n-4\geq1$
	common neighbours in $X$ with respect to $G_n$.
	Thus they and all their $G_n$-neighbours belong to
	one connected component. The remaining vertices
	are precisely the isolated vertices in
	$\N_{F_n}(v_i)\cap\N_{F_n}(v_j)$.
	Since $G_n[U]$ is disconnected, this intersection
	is nonempty. Using
	$\N_{F_n}(v_k)=\{u_k,u_{k+1}\}$, we obtain
	$j=i+1$ or $i=j+1$ modulo $n$.
	Consequently, condition \textup{(iii)} holds.
	
	Conversely, suppose that one of the three conditions
	holds. We verify the component criterion in
	Lemma~\ref{lem:component-criterion}.
	If $Y'=\varnothing$, then $G_n[U]$ consists of the
	$n$ isolated vertices of $X$.
	Every vertex of $T=Y$ has $n-2\geq3$ neighbours
	in distinct components. Hence $T\in\cC(G_n)$.  If $Y'=\{v_i\}$, then $G_n[U]$ consists of a star
	with centre $v_i$, together with the two isolated
	vertices $u_i$ and $u_{i+1}$.
	Let $v_j\in T$.
	The vertices $v_i$ and $v_j$ have a common
	$G_n$-neighbour in $X$, so $v_j$ has a neighbour
	in the star component. It is also adjacent in
	$G_n$ to at least one of $u_i,u_{i+1}$.
	Otherwise,
	\[
	\N_{F_n}(v_j)=\{u_i,u_{i+1}\}
	=\N_{F_n}(v_i),
	\]
	contradicting the distinctness of the
	$F_n$-neighbourhoods of distinct vertices of $Y$.
	Thus every vertex of $T$ meets at least two
	components, and $T\in\cC(G_n)$.
	
	Finally, suppose $Y'=\{v_i,v_{i+1}\}$.
	Their unique common $F_n$-neighbour is $u_{i+1}$.
	As observed above, $G_n[U]$ therefore has exactly
	two components: the isolated vertex $u_{i+1}$ and
	one component containing all remaining vertices.
	Every vertex of $T$ is adjacent in $G_n$ to
	$u_{i+1}$, since
	$\N_{F_n}(u_{i+1})=\{v_i,v_{i+1}\}$.
	It also has a neighbour in the other component:
	it has $n-2\geq3$ neighbours in $X$, and only
	one vertex of $X$ lies outside that component.
	Hence $T\in\cC(G_n)$. The assertion for $T\subseteq X$ follows by symmetry.
\end{proof}

We next characterize the sets with the cut point property that
meet both bipartition classes. Their complements have at most
six vertices, and their structure is determined by the deleted cycle.

\begin{proposition}\label{prop:mixed-cutsets}
	Let $n\geq5$, and let $T\subseteq V(G_n)$ satisfy
	$T\cap X\neq\varnothing$ and $T\cap Y\neq\varnothing$.
	Put \( U=V(G_n)\setminus T, \) \( U_X=U\cap X, \) and  \( U_Y=U\cap Y. \)
	Then $T\in\cC(G_n)$ if and only if one of the following holds:
	\begin{enumerate}[label=\textup{(\roman*)}]
		\item $(|U_X|,|U_Y|)\in\{(3,2),(2,3)\}$ and
		$F_n[U]\cong P_5$;
		
		\item $(|U_X|,|U_Y|)\in\{(4,2),(2,4)\}$ and
		$F_n[U]\cong2P_3$, where the two vertices in the
		smaller part are the centres of the two paths;
		
	\item $|U_X|=|U_Y|=3$ and
	$\bigl(F_n[U]\cong P_6 \text{ or } F_n[U]\cong2P_3\bigr)$.
	\end{enumerate}
\end{proposition}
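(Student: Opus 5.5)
We prove the forward direction first. Suppose $T\in\cC(G_n)$ meets both classes, and write $p=|U_X|$, $q=|U_Y|$; then $p,q\leq n-1$. Since $T\neq\varnothing$, Lemma~\ref{lem:component-criterion} forces $G_n[U]$ to have at least two components, so $G_n[U]$ is disconnected.

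First we bound $p$ and $q$. If $p,q\geq3$, Lemma~\ref{lem:induced-connectivity} applies directly, since $U\subsetneq V(G_n)$. It gives $p=q=3$ and $F_n[U]\cong P_6$ or $2P_3$, which is case (iii). So we may assume, by symmetry, that $q\leq2$.

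The case $q=0$ is impossible. Then $G_n[U]$ consists of the isolated vertices of $U_X$, and a vertex $u_i\in T\cap X$ has no neighbours in $U$, so the criterion fails.

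For $q=1$, say $U_Y=\{v_j\}$. The components of $G_n[U]$ are the star at $v_j$ together with the isolated vertices $u_j,u_{j+1}$ that lie in $U_X$. A vertex $u_i\in T\cap X$ has no $G_n$-neighbours in $U$ other than possibly $v_j$, so it meets at most one component. This contradicts the criterion, so $q=1$ is impossible.

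The main case is $q=2$, say $U_Y=\{v_i,v_j\}$. Each $u_k\in T\cap X$ can only reach components through $v_i$ or $v_j$. So $v_i$ and $v_j$ must lie in different components, and $u_k$ must be adjacent in $G_n$ to both; that is, $u_k\notin\{u_i,u_{i+1},u_j,u_{j+1}\}$.

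Next we see what separates $v_i$ from $v_j$. Every vertex of $U_X$ outside $N_{F_n}(v_i)\cup N_{F_n}(v_j)$ is adjacent in $G_n$ to both $v_i$ and $v_j$, so no such vertex may exist. Hence
\[
U_X\subseteq\{u_i,u_{i+1},u_j,u_{j+1}\},
\]
and $p\leq4$. In addition, $T\cap X\neq\varnothing$ requires some $u_k$ outside this set, which holds because $n\geq5$.

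Now each $v\in T\cap Y$ needs neighbours in two components. The components of $G_n[U]$ are $\{v_i\}$ joined to $U_X\setminus N_{F_n}(v_i)$, and $\{v_j\}$ joined to $U_X\setminus N_{F_n}(v_j)$, plus any isolated vertex, which must lie in $N_{F_n}(v_i)\cap N_{F_n}(v_j)$. There are two sub-cases.
\begin{itemize}
\item If $v_i,v_j$ are non-adjacent on $F_n$, that is $j\neq i\pm1$, their $F_n$-neighbourhoods are disjoint. Then $p\geq3$ is needed for both components to receive an $X$-vertex. With $p=4$ we get $2P_3$ centred at $v_i,v_j$; with $p=3$ we get $P_5$, apparently. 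We then check the criterion for $v\in T\cap Y$: $v$ has at most two $F_n$-neighbours, so it avoids at least one $X$-vertex in each component. This needs each component to have at least two $X$-vertices, or some such count. This bookkeeping is the main obstacle, and I would settle it by enumerating configurations.
\item If $j=i+1$, then $u_{i+1}$ is the common neighbour, and the analysis is similar, yielding $P_5$ or failure. For example, $U_X=\{u_i,u_{i+1},u_{i+2}\}$ gives $F_n[U]\cong P_5$.
\end{itemize}
In each sub-case one must also rule out the configurations in which some $v\in T\cap Y$ is $F_n$-adjacent to every $X$-vertex of a component.

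For the converse, take each listed configuration and read off the components of $G_n[U]$ from the cross edges of $F_n$, as in the proof of Lemma~\ref{lem:induced-connectivity}. Then verify Lemma~\ref{lem:component-criterion}. A vertex $w\in T$ has only two $F_n$-neighbours, and it lies outside $U$, so it cannot be an $F_n$-neighbour of the interior path vertices already of degree two. Hence $w$ misses at most two vertices of $U$ on the opposite side. Since each component contains at least two opposite-side vertices not both $F_n$-adjacent to $w$, the vertex $w$ meets at least two components. Degree counting on the cycle $F_n$ handles each of (i)–(iii) in turn.
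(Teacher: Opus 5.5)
There is a genuine gap in the necessity direction for $q=2$. You correctly derive two constraints there but never combine them. First, every $u_k\in T\cap X$ must be $G_n$-adjacent to both $v_i$ and $v_j$, which says $T\cap X$ misses $N_{F_n}(v_i)\cup N_{F_n}(v_j)$, i.e.\ $N_{F_n}(v_i)\cup N_{F_n}(v_j)\subseteq U_X$. Second, no vertex of $U_X$ lies outside this union. Together they give $U_X=N_{F_n}(v_i)\cup N_{F_n}(v_j)$. This settles the case at once: if $j=i\pm1$ then $p=3$ and $F_n[U]\cong P_5$; otherwise $p=4$ and $F_n[U]\cong 2P_3$ centred at $v_i,v_j$. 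That is the paper's argument, and it needs no analysis of $T\cap Y$ at all.

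Without this step you fall back on an enumeration that you leave unfinished, and the claims you make in it are wrong. If $j\neq i\pm1$, then $p=3$ cannot occur: the omitted vertex of the four-element union would lie in $T\cap X$ and be nonadjacent to $v_i$ or $v_j$. Moreover such a $U$ would give $F_n[U]\cong P_3\sqcup P_2$, not $P_5$. Likewise, $p\geq 3$ is not what is needed for both components to contain $X$-vertices, since $U_X=\{u_i,u_j\}$ already yields the components $\{u_i,v_j\}$ and $\{u_j,v_i\}$.

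The sufficiency sketch points the right way, but its stated justification is false. Write the path in (i) as $a-v_i-b-v_j-c$. Then the three components $\{a,v_j\}$, $\{c,v_i\}$, $\{b\}$ each contain only one $X$-vertex. In (iii), the $K_2$-component contains only one vertex of each class. Two facts are actually needed. First, a vertex of $U$ whose two $F_n$-neighbours both lie in $U$ is $G_n$-adjacent to every vertex of $T$ in the opposite class. Second, you need the large girth of $F_n$. In (i), a vertex $w\in T\cap Y$ is adjacent to $b$, and also to $a$ or $c$, since otherwise $a-v_i-b-v_j-c-w-a$ would be a $6$-cycle in $F_n$. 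In (ii) and (iii) the obstruction is a $4$-cycle, namely two distinct vertices sharing both $F_n$-neighbours. ``Degree counting on the cycle'' alone does not supply this second ingredient.
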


\begin{proof}
	Put $H=G_n[U]$, $p=|U_X|$, and $q=|U_Y|$.
	
	\smallskip
	\noindent\emph{Necessity.}
	Suppose that $T\in\cC(G_n)$.
	Since $T\cap Y\neq\varnothing$, the component criterion
	in Lemma~\ref{lem:component-criterion} implies that at
	least two components of $H$ contain vertices of $X$.
	Similarly, at least two components contain vertices
	of $Y$. In particular, $H$ is disconnected and $p,q\geq2$. If $p,q\geq3$, then
	Lemma~\ref{lem:induced-connectivity} applies because
	$U\subsetneq V(G_n)$. It follows that $p=q=3$ and
	$F_n[U]\cong P_6$ or $2P_3$, giving
	condition~\textup{(iii)}.
	
	It remains to consider the case where at least one
	of $p,q$ equals two. By symmetry, assume that $q=2$
	and write $U_Y=\{v_i,v_j\}$.
	Every vertex of $T\cap X$ must have neighbours in at
	least two components of $H$. Since $v_i$ and $v_j$
	are the only surviving vertices of $Y$, they must
	lie in distinct components, and every vertex of
	$T\cap X$ must be adjacent in $G_n$ to both of them.
	Hence,
	\[
	\N_{F_n}(v_i)\cup\N_{F_n}(v_j)\subseteq U_X.
	\]
	Conversely, a vertex of $U_X$ outside this union would
	be adjacent in $G_n$ to both $v_i$ and $v_j$, placing
	them in the same component of $H$. Therefore
	\begin{equation}\label{eq:union-neighborhood}
		U_X=\N_{F_n}(v_i)\cup\N_{F_n}(v_j).
	\end{equation}
	
	If $j=i+1$ or $j=i-1$ modulo $n$, then the two
	$F_n$-neighbourhoods meet in exactly one vertex.
	Equation~\eqref{eq:union-neighborhood} gives $p=3$,
	and $F_n[U]\cong P_5$. Thus
	condition~\textup{(i)} holds. Otherwise, the two $F_n$-neighbourhoods are disjoint.
	Equation~\eqref{eq:union-neighborhood} then gives
	$p=4$, and $F_n[U]\cong2P_3$, with $v_i$ and $v_j$
	as the centres of the two paths. This gives
	condition~\textup{(ii)}. In particular, $p=q=2$ is impossible, since the union
	in~\eqref{eq:union-neighborhood} has at least three
	vertices. This completes the necessity argument.
	
	\smallskip
	\noindent\emph{Sufficiency.}
	We verify that every vertex of $T$ has neighbours
	in at least two components of $H$.
	
	\smallskip
	\noindent\textup{(i)}
	By symmetry, suppose that $(p,q)=(3,2)$.
	Label the path $F_n[U]$ as \( a-v_i-b-v_j-c, \)
	where $a,b,c\in X$.
	The connected components of $H$ have vertex sets
	$\{a,v_j\}$, $\{c,v_i\}$, and $\{b\}$.
	Thus $H$ consists of two disjoint edges and an isolated vertex. Both $v_i$ and $v_j$ have their two $F_n$-neighbours
	in $U$. Hence every vertex of $T\cap X$ is adjacent
	in $G_n$ to both $v_i$ and $v_j$, and therefore has
	neighbours in the two edge components.
	Now let $w\in T\cap Y$.
	Since $\N_{F_n}(b)=\{v_i,v_j\}$, the vertex $w$ is
	adjacent in $G_n$ to $b$.
	It is also adjacent in $G_n$ to at least one of
	$a,c$. Otherwise, $F_n$ would contain the six-cycle \( a-v_i-b-v_j-c-w-a, \)
	which is impossible because $F_n=C_{2n}$ and $2n>6$.
	Thus $w$ has neighbours in the isolated component
	and in at least one edge component.
	Lemma~\ref{lem:component-criterion} now gives
	$T\in\cC(G_n)$.
	
	\smallskip
	\noindent\textup{(ii)}
	By symmetry, suppose that $(p,q)=(4,2)$.
	Write $U_Y=\{v_i,v_j\}$ and put \( A=\N_{F_n}(v_i) \) and  \( B=\N_{F_n}(v_j). \)
	Since $v_i$ and $v_j$ are the centres of the two
	paths in $F_n[U]$, we have
	$|A|=|B|=2$, $A\cap B=\varnothing$, and $U_X=A\cup B$.
	The graph $H$ consists of two copies of $K_{1,2}$,
	with vertex sets $A\cup\{v_j\}$ and $B\cup\{v_i\}$. Every vertex of $T\cap X$ is adjacent in $G_n$ to
	both $v_i$ and $v_j$, because all their
	$F_n$-neighbours lie in $U_X$.
	Thus every such vertex has neighbours in both components of $H$. Let $w\in T\cap Y$.
	If $w$ had no $G_n$-neighbour in $A$, then
	$A\subseteq\N_{F_n}(w)$.
	Both sets have cardinality two, so
	\[
	\N_{F_n}(w)=A=\N_{F_n}(v_i).
	\]
	This is impossible: two distinct vertices with the
	same two $F_n$-neighbours would give a four-cycle
	in $F_n$. Hence $w$ has a $G_n$-neighbour in $A$.
	The same argument gives a $G_n$-neighbour in $B$.
	Therefore every vertex of $T$ has neighbours in
	both components, and
	Lemma~\ref{lem:component-criterion} yields
	$T\in\cC(G_n)$.
	
	\smallskip
	\noindent\textup{(iii)}
	Suppose that $p=q=3$ and
	$F_n[U]\cong P_6$ or $2P_3$.
	In the latter case, the two centres lie in opposite
	bipartition classes, since each class contains
	three vertices.
	In either configuration, we may label \( U_X=\{a,c,d\} \) and \( U_Y=\{b,e,f\} \)
	so that the edges of $F_n[U]$ are
	$ae,af,bc,bd$, together with at most one additional
	edge between $\{c,d\}$ and $\{e,f\}$.
	Consequently, $H$ has exactly two components:
	the edge on $\{a,b\}$ and a graph on $\{c,d,e,f\}$
	isomorphic to $K_{2,2}$ or $P_4$. Let $w\in T\cap X$.
	Since $\N_{F_n}(b)=\{c,d\}$, the vertex $w$ is
	adjacent in $G_n$ to $b$.
	It is also adjacent in $G_n$ to at least one of
	$e,f$. Otherwise, $F_n$ would contain the four-cycle
	$a-e-w-f-a$.
	Thus $w$ has neighbours in both components of $H$.
	
	Similarly, if $w\in T\cap Y$, then
	$\N_{F_n}(a)=\{e,f\}$ implies that $w$ is adjacent
	in $G_n$ to $a$.
	Moreover, $w$ is adjacent in $G_n$ to at least one
	of $c,d$, since otherwise $F_n$ would contain the
	four-cycle $b-c-w-d-b$.
	Hence every vertex of $T$ has neighbours in both
	components of $H$.
	By Lemma~\ref{lem:component-criterion},
	$T\in\cC(G_n)$.
\end{proof}

We can now combine the preceding propositions into a
complete description of the sets with the cut point property.

\begin{theorem}\label{thm:cut-point-classification}
	Let $T\subseteq V(G_n)$, and put
	$U=V(G_n)\setminus T$, $U_X=U\cap X$, and $U_Y=U\cap Y$.
	Then $T\in\cC(G_n)$ if and only if one of the following
	mutually exclusive conditions holds:
	\begin{enumerate}[label=\textup{(\roman*)}]
		\item $T=\varnothing$.
		
		\item $U=X$ or $U=Y$.
		
		\item $U=X\cup\{v_i\}$ or $U=Y\cup\{u_i\}$
		for some $i\in\mathbb Z_n$.
		
		\item $U=X\cup\{v_i,v_{i+1}\}$ or
		$U=Y\cup\{u_i,u_{i+1}\}$
		for some $i\in\mathbb Z_n$.
		
		\item $(|U_X|,|U_Y|)\in\{(3,2),(2,3)\}$ and
		$F_n[U]\cong P_5$.
		
		\item $(|U_X|,|U_Y|)\in\{(4,2),(2,4)\}$ and
		$F_n[U]\cong2P_3$, where the two vertices in
		the smaller part are the centres of the
		two copies of $P_3$.
		
		\item $|U_X|=|U_Y|=3$ and
		$\bigl(F_n[U]\cong P_6 \text{ or } F_n[U]\cong2P_3\bigr)$.
	\end{enumerate}
\end{theorem}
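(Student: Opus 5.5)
The plan is to partition the subsets $T\subseteq V(G_n)$ by how they meet the bipartition and to read off each case from Propositions~\ref{prop:one-sided-cutsets} and~\ref{prop:mixed-cutsets}. There are three exhaustive and disjoint cases: $T=\varnothing$; $T\neq\varnothing$ contained in one class; and $T$ meeting both $X$ and $Y$. The empty set has the cut point property by definition, which gives condition~(i).

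\emph{One-sided case.} Suppose $\varnothing\neq T\subseteq Y$, so $U=X\cup Y'$ with $Y'=Y\setminus T$. By Proposition~\ref{prop:one-sided-cutsets}, $T\in\cC(G_n)$ if and only if one of the following holds:
\begin{itemize}
\item $Y'=\varnothing$, that is, $U=X$;
\item $Y'=\{v_i\}$, that is, $U=X\cup\{v_i\}$;
\item $Y'=\{v_i,v_{i+1}\}$, that is, $U=X\cup\{v_i,v_{i+1}\}$.
\end{itemize}
These are exactly the $X$-halves of (ii), (iii) and (iv). The case $T\subseteq X$ is symmetric and gives the $Y$-halves. We must also check the converse direction: whenever $U$ is as in (ii)--(iv), the set $T$ is nonempty and contained in one class. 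This holds because $|U|\leq n+2<2n$, so $T\neq\varnothing$.

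\emph{Mixed case.} Suppose $T\cap X\neq\varnothing$ and $T\cap Y\neq\varnothing$. Proposition~\ref{prop:mixed-cutsets} gives precisely conditions (v), (vi) and (vii). For the converse we must confirm that any $U$ satisfying (v)--(vii) comes from such a $T$. In each of these conditions $|U_X|\leq4<n$ and $|U_Y|\leq4<n$, since $n\geq5$. Hence $T$ meets both classes and the proposition applies.

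\emph{Mutual exclusivity.} We compare the pairs $(|U_X|,|U_Y|)$.
\begin{itemize}
\item Condition (i) gives $(n,n)$.
\item Condition (ii) gives $(n,0)$ or $(0,n)$.
\item Condition (iii) gives $(n,1)$ or $(1,n)$.
\item Condition (iv) gives $(n,2)$ or $(2,n)$.
\item Condition (v) gives $(3,2)$ or $(2,3)$.
\item Condition (vi) gives $(4,2)$ or $(2,4)$.
\item Condition (vii) gives $(3,3)$.
\end{itemize}
Since $n\geq5$, none of the values $n,0,1,2$ on the large side of (i)--(iv) coincides with the sizes $2,3,4$ appearing in (v)--(vii). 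The seven lists of pairs are therefore pairwise disjoint, and so the conditions are mutually exclusive.

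The main point needing care is this bookkeeping, in particular the appeal to $n\geq5$: it ensures $4<n$, which both places the sets in (v)--(vii) in the mixed case and separates (iv) from (vi). Beyond that, the theorem is a direct repackaging of the two propositions.
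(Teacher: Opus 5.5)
Your proof is correct and is essentially the paper's argument: you split according to how $T$ meets the bipartition, invoke Propositions~\ref{prop:one-sided-cutsets} and~\ref{prop:mixed-cutsets}, and separate the seven types by the pairs $(|U_X|,|U_Y|)$; your explicit checks that types (ii)--(iv) give a nonempty one-sided $T$ and types (v)--(vii) give a mixed $T$ fill in bookkeeping the paper leaves implicit. One phrasing fix: (i)--(iv) are separated from (v)--(vii) because each pair in (i)--(iv) has a coordinate equal to $n\geq5$, while both coordinates in (v)--(vii) lie in $\{2,3,4\}$; the value $2$ occurs on both sides, so ``none of $n,0,1,2$ coincides with $2,3,4$'' is not literally right.
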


\begin{proof}
	The empty set belongs to $\cC(G_n)$ by definition.
	Suppose that $T\neq\varnothing$.
	If $T$ is contained in one bipartition class, then
	Proposition~\ref{prop:one-sided-cutsets} gives precisely
	conditions \textup{(ii)--(iv)}.
	If $T$ meets both bipartition classes, then
	Proposition~\ref{prop:mixed-cutsets} gives precisely
	conditions \textup{(v)--(vii)}.
	The conditions are mutually exclusive because, up to
	interchanging $X$ and $Y$, the corresponding pairs
	$(|U_X|,|U_Y|)$ are \( (n,n), (n,0), (n,1), (n,2),
	 (3,2), (4,2),\) and \( (3,3)
	\) respectively, and these pairs are distinct.
\end{proof}

\begin{corollary}\label{cor:number-cutsets}
For $n\geq5$, \( |\cC(G_n)|=2n^2+n+3\), and hence \( |\Min(J_{G_n})|=2n^2+n+3. \)
\end{corollary}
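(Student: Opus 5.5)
The plan is to count the sets in each of the seven mutually exclusive classes of Theorem~\ref{thm:cut-point-classification} separately and add the counts. The second equality then follows at once, since $T\mapsto P_T(G_n)$ is a bijection from $\cC(G_n)$ onto $\Min(J_{G_n})$ (distinct $T$ give distinct primes, because $T$ is recovered as the set of $i$ with $x_i\in P_T(G_n)$). Throughout I will use that $F_n=C_{2n}$ with $2n\geq10$. Hence an induced subgraph of $F_n$ on $k\leq 2n-1$ vertices that is isomorphic to $P_k$ is exactly an arc of $k$ consecutive vertices of the cycle. An induced $2P_3$ is exactly two vertex-disjoint arcs of length three with no cycle edge between them.

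The first four types are immediate.
\begin{itemize}
\item Type (i) contributes $1$.
\item Type (ii) contributes $2$.
\item Type (iii) contributes $2n$ ($n$ choices of $v_i$ and $n$ of $u_i$).
\item Type (iv) contributes $2n$. The sets $\{v_i,v_{i+1}\}$ are distinct for distinct $i\in\mathbb Z_n$ because $n\geq3$.
\end{itemize}

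For type (v), $U$ is an arc of $5$ consecutive vertices of $C_{2n}$, and there are $2n$ such arcs, one per starting vertex. An arc starting in $X$ has profile $(3,2)$, and one starting in $Y$ has profile $(2,3)$, so all $2n$ arcs qualify.

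For type (vi) with profile $(4,2)$, the two $Y$-centres $v_i,v_j$ determine $U=\{v_i,v_j\}\cup N_{F_n}(v_i)\cup N_{F_n}(v_j)$. The requirement that the two arcs $u_i v_i u_{i+1}$ and $u_j v_j u_{j+1}$ be disjoint and non-adjacent amounts to $j\neq i,i\pm1$. This is the condition already isolated in the proof of Proposition~\ref{prop:mixed-cutsets}; the non-adjacency is automatic, since both arcs end in $X$ and $X$-vertices are never adjacent to each other. This gives $\binom n2-n=n(n-3)/2$ sets, and doubling for the $(2,4)$ case gives $n(n-3)$.

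For type (vii), the $P_6$ case contributes the $2n$ arcs of six consecutive vertices, each of which automatically has profile $(3,3)$. In the $2P_3$ case the two centres lie in opposite classes, as noted in the proof of Proposition~\ref{prop:mixed-cutsets}. So I choose an $X$-centre $u_i$ with arc $v_{i-1}u_iv_i$ and a $Y$-centre $v_j$. In cyclic position along $C_{2n}$, $Y$-vertices sit at odd offsets from $u_i$. An offset $\pm1$ makes the arcs overlap and an offset $\pm3$ makes them adjacent, while offsets of absolute value at least $5$ are admissible. This excludes exactly $4$ of the $n$ choices of $v_j$, which requires $n\geq5$ so that these four offsets are distinct $Y$-vertices. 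Each unordered configuration arises once, giving $n(n-4)$ sets.

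Summing,
\[
1+2+2n+2n+2n+n(n-3)+2n+n(n-4)=2n^2+n+3.
\]
The main point needing care is the type (vii) $2P_3$ count: one must get the forbidden offsets right and check that no set is double counted. Double counting cannot occur because the ordered pair ($X$-centre, $Y$-centre) is determined by $U$. I would verify the count against $n=5$ by listing the admissible centres directly.
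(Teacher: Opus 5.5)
Your proposal is correct and follows the paper's own proof: the same type-by-type count over the seven classes of Theorem~\ref{thm:cut-point-classification}, where your cyclic-offset argument in the balanced $2P_3$ case excludes exactly the paper's four values $j\in\{i-2,i-1,i,i+1\}$ and gives $n(n-4)$. (Minor remark: those four offsets are already distinct for $n\geq4$, so $n\geq5$ is not what that step needs, though the overstatement is harmless.)
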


\begin{proof}
	We count the seven mutually exclusive types in
	Theorem~\ref{thm:cut-point-classification}. Since
	$T=V(G_n)\setminus U$, counting the surviving sets $U$
	is equivalent to counting the sets $T$.
	
	Types \textup{(i)} and \textup{(ii)} contribute one and
	two sets, respectively. Type \textup{(iii)} contributes
	$2n$ sets, since we choose the entire surviving
	bipartition class and one vertex of the other class.
	Type \textup{(iv)} also contributes $2n$ sets: each
	bipartition class has $n$ unordered pairs of cyclically
	consecutive vertices.
	
	For type \textup{(v)}, an induced $P_5$ in $F_n$ consists
	of five consecutive vertices of the cycle. There are
	$2n$ distinct such vertex sets, so this type contributes
	$2n$ sets.
	
	For type \textup{(vi)}, suppose first that the two centres
	lie in $Y$. Their neighbourhoods in $F_n$ must be disjoint,
	which occurs precisely when their indices are not
	consecutive modulo $n$. Of the $\binom{n}{2}$ unordered
	pairs in $Y$, exactly $n$ have consecutive indices.
	The two centres uniquely determine the surviving set.
	Allowing the centres to lie in either bipartition class,
	we therefore obtain \( 2\left(\binom{n}{2}-n\right)=n(n-3) \)
	sets of this type.
	
	For type \textup{(vii)}, the induced copies of $P_6$
	contribute $2n$ sets, one for each set of six consecutive
	vertices of $F_n$.
	It remains to count the induced copies of $2P_3$ having
	three vertices in each bipartition class. In such a copy,
	one centre lies in $X$ and the other lies in $Y$.
	Fix the centre $u_i\in X$. Its path is
	$v_{i-1}-u_i-v_i$. If the other centre is $v_j\in Y$,
	its path is $u_j-v_j-u_{j+1}$.
	The two paths overlap when $j=i-1$ or $j=i$.
	When $j=i-2$, the edge $u_{i-1}v_{i-1}$ joins them;
	when $j=i+1$, the edge $u_{i+1}v_i$ joins them.
	For every other value of $j$, the paths are disjoint
	and there is no edge between them. Hence precisely
	$n-4$ choices of $v_j$ give an induced $2P_3$.
	Each such induced subgraph has a unique centre in $X$
	and a unique centre in $Y$, so it is counted exactly
	once. There are therefore $n(n-4)$ balanced copies of
	$2P_3$. Altogether, type \textup{(vii)} contributes
	$2n+n(n-4)=n(n-2)$ sets.
	Summing the seven contributions yields
	\[
	|\cC(G_n)|
	=1+2+2n+2n+2n+n(n-3)+n(n-2)
	=2n^2+n+3.
	\]
	The equality for $|\Min(J_{G_n})|$ follows from the
	bijection $T\mapsto P_T(G_n)$ between $\cC(G_n)$ and
	the minimal primes of $J_{G_n}$.
\end{proof}

\subsection{Dimension and height}\label{sec:dimension}

We now use the classification of sets with the cut point
property to determine the Krull dimension of
$S_{G_n}/J_{G_n}$ and the dimensions of its minimal-prime
quotients.

\begin{theorem}\label{thm:prime-dimensions}
	The Krull dimension of $S_{G_n}/J_{G_n}$ is $2n+1$.
	Moreover, for $T\in\cC(G_n)$, the following assertions hold:
	\begin{enumerate}[label=\textup{(\roman*)}]
		\item If $T=\varnothing$, then
		$\dim(S_{G_n}/P_T(G_n))=2n+1$.
		
		\item If $T=X$ or $T=Y$, then
		$\dim(S_{G_n}/P_T(G_n))=2n$.
		
		\item If $\varnothing\neq T\subsetneq X$ or
		$\varnothing\neq T\subsetneq Y$, then
		$\dim(S_{G_n}/P_T(G_n))=n+4$.
		
		\item If $T$ meets both bipartition classes, then
		$\dim(S_{G_n}/P_T(G_n))=8$.
	\end{enumerate}
\end{theorem}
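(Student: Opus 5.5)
The plan is to combine formula \eqref{eq:dimension-prime}, $\dim(S_{G_n}/P_T(G_n))=2n-|T|+c_{G_n}(T)$, with the classification in Theorem~\ref{thm:cut-point-classification}. For each of the seven types we compute $|T|$ and the number of components $c_{G_n}(T)$ of $G_n[U]$. Most of these component counts were already found in the proofs of Propositions~\ref{prop:one-sided-cutsets} and~\ref{prop:mixed-cutsets}. The Krull dimension is then the maximum of these values over $\cC(G_n)$.

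The cases go as follows.
\begin{enumerate}[label=\textup{(\arabic*)}]
\item For $T=\varnothing$, $G_n$ is connected by Proposition~\ref{prop:vertex-connectivity}, so the dimension is $2n+1$.
\item For $T=Y$, $U=X$ consists of $n$ isolated vertices, giving $2n-n+n=2n$; the case $T=X$ is symmetric.
\item For a proper one-sided $T$ there are two subcases. If $U=X\cup\{v_i\}$, then $|T|=n-1$ and $G_n[U]$ is a star plus the isolated vertices $u_i,u_{i+1}$, so $c=3$ and the dimension is $2n-(n-1)+3=n+4$. If $U=X\cup\{v_i,v_{i+1}\}$, then $|T|=n-2$ and $c=2$ (the isolated $u_{i+1}$ and one big component), giving $2n-(n-2)+2=n+4$.
\item For mixed $T$ we use the component structures from the sufficiency part of Proposition~\ref{prop:mixed-cutsets}.
\begin{itemize}
\item Type (v): $|U|=5$, and $H$ is two edges plus an isolated vertex, so $c=3$ and $|T|=2n-5$, giving $5+3=8$.
\item Type (vi): $|U|=6$ and $H$ is two copies of $K_{1,2}$, so $c=2$, giving $6+2=8$.
\item Type (vii): $|U|=6$ with $c=2$ (an edge plus a $K_{2,2}$ or $P_4$), again giving $8$.
\end{itemize}
In every mixed case the dimension is $|U|+c$, which equals $8$.
\end{enumerate}

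Finally, compare the values. Since $n\geq5$, we have $2n+1>2n>n+4$ (the last because $n>4$), and $2n+1\geq 11>8$. So the maximum is attained only at $T=\varnothing$, and $\dim S_{G_n}/J_{G_n}=2n+1$.

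There is no serious obstacle. The only care needed is to read off $c_{G_n}(T)$ correctly in each type, in particular to confirm that type (iv) has exactly two components and type (iii) exactly three, as established in the proof of Proposition~\ref{prop:one-sided-cutsets}.
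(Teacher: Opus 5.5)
Your proposal is correct and follows essentially the same route as the paper. Both arguments rewrite the dimension formula as $|U|+c_{G_n}(T)$, read off the component counts for each type of Theorem~\ref{thm:cut-point-classification} from Propositions~\ref{prop:one-sided-cutsets} and~\ref{prop:mixed-cutsets}, and then take the maximum $2n+1$ over $\cC(G_n)$.
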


\begin{proof}
	Put $U=V(G_n)\setminus T$.
	By \eqref{eq:dimension-prime}, we have
	\[
	\dim(S_{G_n}/P_T(G_n))
	=2n-|T|+c_{G_n}(T)
	=|U|+c_{G_n}(T).
	\]
	We compute $|U|$ and $c_{G_n}(T)$ in each case.
	
	\smallskip
	\noindent\textup{(i)}
	If $T=\varnothing$, then $U=V(G_n)$.
	Since $G_n$ is connected, $c_{G_n}(T)=1$, and hence
	$\dim(S_{G_n}/P_T(G_n))=2n+1$.
	
	\smallskip
	\noindent\textup{(ii)}
	Suppose that $T=X$; the case $T=Y$ is symmetric.
	Then $U=Y$, and $G_n[U]$ consists of $n$ isolated
	vertices. Thus $|U|=c_{G_n}(T)=n$, giving
	$\dim(S_{G_n}/P_T(G_n))=2n$.
	
	\smallskip
	\noindent\textup{(iii)}
	By symmetry, suppose that
	$\varnothing\neq T\subsetneq Y$.
	Proposition~\ref{prop:one-sided-cutsets} shows that
	either $U=X\cup\{v_i\}$ or
	$U=X\cup\{v_i,v_{i+1}\}$ for some $i\in\mathbb Z_n$.
	
	In the first case, $G_n[U]$ consists of a star on
	$n-1$ vertices and two isolated vertices.
	Therefore $|U|=n+1$ and $c_{G_n}(T)=3$, so
    \( \dim(S_{G_n}/P_T(G_n))=(n+1)+3=n+4. \)
	In the second case, $G_n[U]$ consists of a connected
	component on $n+1$ vertices and one isolated vertex.
	Hence $|U|=n+2$ and $c_{G_n}(T)=2$, giving
	\( \dim(S_{G_n}/P_T(G_n))=(n+2)+2=n+4. \)
	
	\smallskip
	\noindent\textup{(iv)}
	Suppose that $T$ meets both bipartition classes.
	By Proposition~\ref{prop:mixed-cutsets}, there are
	three possible configurations. If $F_n[U]\cong P_5$, then $G_n[U]$ consists of two disjoint edges and one isolated vertex.
	Thus $|U|=5$ and $c_{G_n}(T)=3$, yielding
	$\dim(S_{G_n}/P_T(G_n))=8$. In each of the other two configurations, $|U|=6$
	and $G_n[U]$ has exactly two components.
	More precisely, when the surviving part sizes are
	$(4,2)$ or $(2,4)$, these components are two copies
	of $K_{1,2}$. When the surviving part sizes are
	$(3,3)$, they are a copy of $K_2$ together with
	a copy of $P_4$ or $K_{2,2}$.
	Consequently,
	$\dim(S_{G_n}/P_T(G_n))=8$.
	
	Finally, Theorem~\ref{thm:cut-point-classification}
	accounts for all minimal primes of $J_{G_n}$.
	Therefore
	\[
	\begin{aligned}
		\dim(S_{G_n}/J_{G_n}) =\max_{T\in\cC(G_n)}
		\dim(S_{G_n}/P_T(G_n))=\max\{2n+1,\,2n,\,n+4,\,8\}=2n+1.
	\end{aligned}
	\]
\end{proof}
The dimension formula also admits a direct proof using
a Hamiltonian cycle.

\begin{proposition}\label{prop:Hamiltonian}
	The graph $G_n$ is Hamiltonian for every $n\geq4$.
\end{proposition}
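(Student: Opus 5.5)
We will write down an explicit Hamiltonian cycle using the adjacency rule \eqref{eq:adjacency}: $u_i$ is adjacent to $v_j$ exactly when $j\notin\{i,i-1\}$. The natural idea is to walk with a constant index shift. We try the closed walk
\[
u_0, v_{s}, u_{2s'}, \ldots
\]
in which every step from $u_i$ goes to $v_{i+a}$ and every step from $v_j$ goes to $u_{j+b}$, for fixed residues $a,b$ modulo $n$. The step $u_i\to v_{i+a}$ is an edge of $G_n$ iff $a\notin\{0,-1\}$. The step $v_j\to u_{j+b}$ is an edge iff $j\notin\{j+b,j+b-1\}$, that is, iff $b\notin\{0,1\}$.

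After one round trip $u_i\to v_{i+a}\to u_{i+a+b}$, the $X$-index advances by $d=a+b$. The walk therefore visits all $n$ vertices of $X$, and all $n$ vertices of $Y$ (whose indices are the $X$-indices shifted by $a$), before closing up exactly when $\gcd(d,n)=1$. So we need $a,b$ with
\[
a\notin\{0,n-1\}, \qquad b\notin\{0,1\}, \qquad \gcd(a+b,n)=1 .
\]

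\textbf{Choice of $a,b$.} We take $a=1$ and $b=n-1$, which gives $d=0$; this fails. We therefore choose $a=1$ and $b=2$, which is allowed once $n\geq4$, since $1\neq n-1$ and $2\notin\{0,1\}$. This gives $d=3$, which works whenever $3\nmid n$. For $3\mid n$ we instead take $a=2$ and $b=2$, giving $d=4$, which is coprime to $n$ when $n$ is odd. When $n$ is divisible by $6$, a constant-shift walk needs an odd $d=a+b$ coprime to $n$, hence coprime to $6$. For $n\geq 6$ the choice $a=2$, $b=3$ gives $d=5$, and a single choice cannot cover all $n$, so we take $d$ to be a unit modulo $n$ with $d\geq5$ and avoid the forbidden values. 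Concretely: if $n$ is even, then $n-1$ is a unit, and we want $a+b\equiv n-1$ with $a\notin\{0,-1\}$ and $b\notin\{0,1\}$. Taking $a=1$ and $b=n-2$ is valid for $n\geq4$ because $n-2\notin\{0,1\}$. If $n$ is odd, we take $d=2$ via $a=b=1$; but $b=1$ is forbidden, so instead we use $a=n-2$ (allowed, since $n-2\neq n-1$ and $n-2\neq 0$) and $b=4$ (allowed). Then $d\equiv 2$, which is a unit for odd $n$. This case split, $n$ even versus $n$ odd, is the cleanest choice and covers every $n\geq4$.

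\textbf{Verification.} In each case the index sequences $i, i+d, i+2d, \ldots$ run through all of $\mathbb Z_n$ exactly once before returning, because $d$ is a unit modulo $n$. The $Y$-vertices visited are their translates by $a$, so they are also all distinct. Hence the walk is a cycle of length $2n$ through every vertex, and every step is an edge of $G_n$ by \eqref{eq:adjacency}.

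The main obstacle is only the bookkeeping of the forbidden residues together with the coprimality requirement. A constant-shift walk works as soon as suitable $a,b$ exist, and the parity split above supplies them. As a sanity check we will compare against $n=4$, where $G_4\cong C_8$.
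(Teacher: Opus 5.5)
Your final construction is correct and is essentially the paper's. The paper takes the union of the perfect matchings $\{u_iv_{i+1}\}$ and $\{u_iv_{i+2}\}$, which is exactly your constant-shift walk with $a=1$, $b=n-2$ (so $d\equiv-1$). Since $-1$ is a unit modulo every $n$, that single choice already works for all $n\geq4$. Your odd-$n$ case with $(a,b)=(n-2,4)$ is valid but unnecessary, and your remark that ``a single choice cannot cover all $n$'' is false; drop it, along with the other exploratory false starts.
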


\begin{proof}
	Consider the two perfect matchings \( M_1=\bigl\{\{u_i,v_{i+1}\}:i\in\mathbb Z_n\bigr\} \) and  \( M_2=\bigl\{\{u_i,v_{i+2}\}:i\in\mathbb Z_n\bigr\}. \)
	By \eqref{eq:adjacency}, all these edges belong to
	$G_n$. Moreover, $M_1$ and $M_2$ are disjoint, so
	their union is a spanning subgraph in which every
	vertex has degree two.
	
	Starting at $u_i$ and alternately following an edge of $M_1$
	and an edge of $M_2$, we obtain
	\[
	u_i \xrightarrow{M_1} v_{i+1}
	\xrightarrow{M_2} u_{i-1}.
	\]
	Thus the indices of the $X$-vertices decrease by one
	modulo $n$ after every two steps.
	Consequently, this traversal visits all $n$ vertices
	of $X$ and all $n$ vertices of $Y$ exactly once before
	returning to $u_i$.
	Hence $M_1\cup M_2$ is a Hamiltonian cycle of $G_n$.
\end{proof}

\begin{remark}
	Let $C$ be a Hamiltonian cycle of $G_n$, and let
	$\varnothing\neq T\subsetneq V(G_n)$.
	The graph $C\setminus T$ is a disjoint union of paths,
	where isolated vertices are allowed.
	Fix an orientation of $C$.
	The first vertex of each surviving path is immediately
	preceded along $C$ by a vertex of $T$, and distinct
	paths have distinct preceding vertices.
	Thus $C\setminus T$ has at most $|T|$ components. Since $C\setminus T$ is a spanning subgraph of
	$G_n\setminus T$, we obtain \( c_{G_n}(T)\leq |T|. \)
	Therefore, for every nonempty $T\in\cC(G_n)$,
	\[
	\dim(S_{G_n}/P_T(G_n))
	=2n-|T|+c_{G_n}(T)
	\leq2n.
	\]
	On the other hand, $G_n$ is connected, so
	$\dim(S_{G_n}/P_{\varnothing}(G_n))=2n+1$.
	Taking the maximum over the minimal primes therefore
	gives
	\[
	\dim(S_{G_n}/J_{G_n})=2n+1,
	\]
	without using the classification in
	Theorem~\ref{thm:cut-point-classification}.
\end{remark}

\begin{corollary}\label{cor:height-bight}
	The ideal $J_{G_n}$ satisfies
	$\height(J_{G_n})=2n-1$ and $\bight(J_{G_n})=4n-8$.
\end{corollary}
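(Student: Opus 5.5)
The plan is to read off both invariants from the prime dimensions already computed in Theorem~\ref{thm:prime-dimensions}. By \eqref{eq:height-prime} and \eqref{eq:dimension-prime}, every $T\in\cC(G_n)$ satisfies
\[
\height P_T(G_n)=4n-\dim(S_{G_n}/P_T(G_n)),
\]
since $\dim S_{G_n}=4n$. By Theorem~\ref{thm:cut-point-classification} the minimal primes of $J_{G_n}$ are exactly the $P_T(G_n)$ with $T\in\cC(G_n)$. So $\height(J_{G_n})$ is $4n$ minus the largest prime dimension, and $\bight(J_{G_n})$ is $4n$ minus the smallest prime dimension.

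Theorem~\ref{thm:prime-dimensions} gives the possible dimensions as $2n+1$, $2n$, $n+4$ and $8$. Each value is attained:
\begin{itemize}
\item $2n+1$ by $T=\varnothing$;
\item $2n$ by $T=X$;
\item $n+4$ by a type~(iii) set such as $U=X\cup\{v_0\}$;
\item $8$ by a mixed set, for instance one with $F_n[U]\cong P_5$, which exists for every $n\ge5$ (see the count in Corollary~\ref{cor:number-cutsets}).
\end{itemize}

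The maximum of these values is $2n+1$, so $\height(J_{G_n})=4n-(2n+1)=2n-1$.

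For the minimum, compare $8$ with $n+4$ and $2n$. For $n\ge5$ we have $n+4\ge9>8$ and $2n\ge10>8$, so the minimum is $8$. Hence $\bight(J_{G_n})=4n-8$.

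The only point needing care is that the classification is complete, so that no other minimal primes exist. That is supplied by Theorem~\ref{thm:cut-point-classification} together with the Herzog et al.\ correspondence between $\cC(G)$ and $\Min(J_G)$. The remaining checks are the trivial inequalities above, which use the hypothesis $n\ge5$.
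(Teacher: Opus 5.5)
Your proposal is correct and follows the paper's proof: you convert the four prime dimensions from Theorem~\ref{thm:prime-dimensions} into the heights $2n-1$, $2n$, $3n-4$ and $4n-8$ using $\dim S_{G_n}=4n$, then compare them using $n\geq5$. You also check explicitly that each dimension value is attained, in particular $8$ via a type~(v) set; the paper leaves this implicit, and it is what guarantees that the big height actually equals $4n-8$.
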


\begin{proof}
	Since $\dim S_{G_n}=4n$, every minimal prime
	$P_T(G_n)$ satisfies
	\[
	\height(P_T(G_n))
	=4n-\dim(S_{G_n}/P_T(G_n)).
	\]
Theorem~\ref{thm:prime-dimensions} therefore gives the possible
heights
\[
2n-1,~ 2n,~ 3n-4,~ 4n-8.
\]
Since $n\geq5$, these satisfy
\[
2n-1<2n<3n-4<4n-8.
\]
	The height and big height of $J_{G_n}$ are,
	respectively, the minimum and maximum heights
	among its minimal primes. Hence
	$\height(J_{G_n})=2n-1$ and
	$\bight(J_{G_n})=4n-8$.
\end{proof}

\begin{corollary}\label{cor:not-unmixed}
	The ideal $J_{G_n}$ is not unmixed.
	Consequently, $S_{G_n}/J_{G_n}$ is not Cohen--Macaulay.
\end{corollary}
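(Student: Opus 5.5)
The plan is to read both assertions off Corollary~\ref{cor:height-bight}, using the standard facts about associated primes of binomial edge ideals recalled in Section~\ref{sec:preliminaries}.

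First, since $J_{G_n}$ is radical, we have
\[
\Ass(S_{G_n}/J_{G_n})=\Min(S_{G_n}/J_{G_n})=\{P_T(G_n):T\in\cC(G_n)\}.
\]
Unmixedness therefore amounts to all minimal primes having the same height, that is, to $\height(J_{G_n})=\bight(J_{G_n})$.

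Corollary~\ref{cor:height-bight} gives $\height(J_{G_n})=2n-1$ and $\bight(J_{G_n})=4n-8$. These differ because $4n-8>2n-1$ exactly when $n>7/2$, which holds for $n\geq5$. For concrete witnesses, $P_\varnothing(G_n)$ has height $2n-1$, while any $T$ meeting both classes has height $4n-8$; the latter type is nonempty by Corollary~\ref{cor:number-cutsets}, since it contributes $2n$ sets of type~(v). Hence $J_{G_n}$ is not unmixed.

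For the Cohen--Macaulay assertion, I will invoke the standard fact that a Cohen--Macaulay ring is unmixed. If $R=S_{G_n}/J_{G_n}$ were Cohen--Macaulay, every associated prime $P$ would satisfy $\dim S_{G_n}/P=\dim R$. Since $S_{G_n}$ is a polynomial ring, this would force all associated primes of $J_{G_n}$ to have the same height, contradicting the first part.

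There is no real obstacle. The only care needed is to note that equal dimension of the quotients $S_{G_n}/P$ translates to equal height via $\height P=4n-\dim(S_{G_n}/P)$ in the polynomial ring.
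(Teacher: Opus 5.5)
Your proposal is correct and follows essentially the same route as the paper. Corollary~\ref{cor:height-bight} gives $\height(J_{G_n})=2n-1\neq 4n-8=\bight(J_{G_n})$, which rules out unmixedness, and Cohen--Macaulay quotients are unmixed; your explicit witnesses ($P_\varnothing$ versus a type~(v) prime) and the dimension-to-height translation in $S_{G_n}$ are accurate extra detail.
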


\begin{proof}
	By Corollary~\ref{cor:height-bight}, the minimal
	primes of $J_{G_n}$ have different heights.
	Thus $J_{G_n}$ is not unmixed.
	Since a homogeneous ideal with a Cohen--Macaulay
	quotient is unmixed, $S_{G_n}/J_{G_n}$ cannot be
	Cohen--Macaulay.
\end{proof}

\subsection{Multiplicity and arithmetic degree}

The classification of sets with the cut point property
also allows us to compute the multiplicity and arithmetic
degree of $S_{G_n}/J_{G_n}$.

\begin{proposition}\label{prop:degrees}
	The multiplicity and arithmetic degree of
	$S_{G_n}/J_{G_n}$ are given by \( e(S_{G_n}/J_{G_n})=2n, \) and \( \adeg(S_{G_n}/J_{G_n})=21n^2-33n+2. \)
\end{proposition}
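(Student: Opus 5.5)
The plan is to reduce both invariants to multiplicities of the individual minimal primes $P_T(G_n)$ and then read these off from the classification in Theorem~\ref{thm:cut-point-classification}. Since $J_{G_n}$ is radical with minimal primes $P_T(G_n)$, $T\in\cC(G_n)$, the associativity formula for multiplicity gives
\[
e(S_{G_n}/J_{G_n})=\sum_{\substack{T\in\cC(G_n)\\ \dim S_{G_n}/P_T(G_n)=2n+1}} e(S_{G_n}/P_T(G_n)).
\]
The arithmetic degree, by definition, is the same sum taken over all $T\in\cC(G_n)$.

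First I would compute $e(S_{G_n}/P_T(G_n))$ for an arbitrary $T$. Up to the variables $x_i,y_i$ with $i\in T$, the ring $S_{G_n}/P_T(G_n)$ is a tensor product over $\K$ of the rings $\K[x_i,y_i:i\in V(G_j)]/J_{\widetilde G_j}$. Here $J_{\widetilde G_j}$ is the ideal of $2\times2$ minors of a generic $2\times m_j$ matrix, where $m_j=|V(G_j)|$. This determinantal ring has multiplicity $m_j$, by the classical degree formula for rank-one $2\times m$ matrices (or because it is the Segre product of $\mathbb P^1\times\mathbb P^{m_j-1}$, of degree $\binom{m_j}{1}=m_j$). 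An isolated vertex gives a polynomial ring in two variables, which has multiplicity $1=m_j$. Multiplicity is multiplicative under tensor products of standard graded algebras. Hence
\[
e(S_{G_n}/P_T(G_n))=\prod_{j=1}^{c_{G_n}(T)}|V(G_j)|,
\]
the product of the sizes of the components of $G_n\setminus T$.

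For $e(S_{G_n}/J_{G_n})$, Theorem~\ref{thm:prime-dimensions} shows that only $T=\varnothing$ attains dimension $2n+1$. Since $G_n$ is connected, this prime contributes $2n$, so $e(S_{G_n}/J_{G_n})=2n$.

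For the arithmetic degree I would go through the seven types, using the component structures already found in the proofs of Proposition~\ref{prop:one-sided-cutsets}, Proposition~\ref{prop:mixed-cutsets} and Theorem~\ref{thm:prime-dimensions}, together with the counts from Corollary~\ref{cor:number-cutsets}:
\begin{enumerate}[label=\textup{(\roman*)}]
\item one set, with a single component of size $2n$, contributing $2n$;
\item two sets, all components isolated vertices, contributing $2\cdot1$;
\item $2n$ sets, each a star on $n-1$ vertices plus two isolated vertices, contributing $2n(n-1)$;
\item $2n$ sets, each a component on $n+1$ vertices plus one isolated vertex, contributing $2n(n+1)$;
\item $2n$ sets, each two edges plus an isolated vertex, contributing $2n\cdot 4$;
\item $n(n-3)$ sets, each two copies of $K_{1,2}$, contributing $9n(n-3)$;
\item $n(n-2)$ sets, each $K_2$ plus a $4$-vertex component, contributing $8n(n-2)$.
\end{enumerate}
Summing these gives
\[
2n+2+2n^2-2n+2n^2+2n+8n+9n^2-27n+8n^2-16n=21n^2-33n+2.
\]

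The main obstacle is justifying the multiplicity formula for $P_T(G_n)$. Two ingredients are needed: the degree $m$ of the $2\times2$ minor ideal of a generic $2\times m$ matrix, and multiplicativity of $e$ under tensor products. The latter follows from the product formula for Hilbert series: the $h$-polynomials multiply, so their values at $t=1$ multiply. The remaining work is careful bookkeeping of component sizes in each type.
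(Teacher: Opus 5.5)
Your proposal is correct and follows essentially the same route as the paper: you compute $e(S_{G_n}/P_T(G_n))$ as the product of component orders via tensor products of rank-one determinantal rings, apply the associativity formula with $P_{\varnothing}(G_n)$ as the unique top-dimensional minimal prime to get $e=2n$, and sum type by type using the counts from Corollary~\ref{cor:number-cutsets} to get the arithmetic degree. Your extra justifications, namely the degree of the Segre variety $\mathbb P^1\times\mathbb P^{m-1}$ and the multiplicativity of $h$-polynomials evaluated at $t=1$, fill in details that the paper treats as standard.
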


\begin{proof}
	Write $S=S_{G_n}$ and $J=J_{G_n}$.
	For $T\in\cC(G_n)$, let $r_1,\ldots,r_c$ be the
	orders of the connected components of $G_n\setminus T$.
	The quotient $S/P_T(G_n)$ is a tensor product over
	$\K$ of the complete-graph quotients corresponding
	to these components.
	Since $e(S_{K_r}/J_{K_r})=r$ and multiplicities
	multiply under tensor products over $\K$, we obtain
	\[
	e(S/P_T(G_n))=\prod_{j=1}^{c}r_j.
	\]
	The ring $S/J$ is reduced, so its associated primes
	are precisely its minimal primes, and its localization
	at each minimal prime is a field.
	Consequently, the local multiplicity factors appearing
	in the associativity formula are all equal to one.
	By Theorem~\ref{thm:prime-dimensions},
	$P_{\varnothing}(G_n)$ is the unique minimal prime
	whose quotient has dimension $\dim(S/J)$.
	Since $G_n$ is connected, the associativity formula
	therefore gives
	\[
	e(S/J)=e(S/P_{\varnothing}(G_n))=2n.
	\]
	
	For the arithmetic degree, every minimal prime
	contributes. Thus
	\[
	\adeg(S/J)
	=\sum_{T\in\cC(G_n)}e(S/P_T(G_n)).
	\]
	We evaluate this sum using the types in
	Theorem~\ref{thm:cut-point-classification} and
	the counts established in the proof of
	Corollary~\ref{cor:number-cutsets}. The empty set contributes $2n$.
	For each of the two sets of type \textup{(ii)},
	all surviving vertices are isolated, so the
	corresponding multiplicity is $1$.
	For types \textup{(iii)} and \textup{(iv)}, the
	component orders are $n-1,1,1$ and $n+1,1$,
	respectively. Each type contains $2n$ sets, so
	their contributions are $2n(n-1)$ and $2n(n+1)$.	The $2n$ sets of type \textup{(v)} have component
	orders $2,2,1$, giving multiplicity $4$ each.
	The $n(n-3)$ sets of type \textup{(vi)} have
	component orders $3,3$, giving multiplicity $9$ each.
	Finally, the $n(n-2)$ sets of type \textup{(vii)}
	have component orders $2,4$, giving multiplicity
	$8$ each. Therefore
	\[
	\begin{aligned}
		\adeg(S/J)
		=2n+2+2n(n-1)+2n(n+1)+8n+9n(n-3)+8n(n-2)=21n^2-33n+2.
	\end{aligned}
	\]
\end{proof}

\section{Graded and homological invariants}\label{sec:graded}
We now study the graded and homological structure of
$S_{G_n}/J_{G_n}$. We first determine the linear strand and
$\beta_{2,4}$ and obtain a finite expression for the Hilbert series.
We then use the big-height computation together with known
small-depth classifications to obtain bounds for projective dimension
and depth.
\subsection{Betti numbers and the Hilbert series}\label{sec:betti}
\begin{proposition}\label{prop:linear}
	The graded Betti numbers in the linear strand of \( S_{G_n}/J_{G_n} \) are given by
	\[
	\beta_{i,i+1}(S_{G_n}/J_{G_n})=
	\begin{cases}
		n(n-2), & i=1,\\
		0,      & i\geq2.
	\end{cases}
	\]
\end{proposition}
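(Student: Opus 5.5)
The case $i=1$ is immediate. The generators $f_{ij}=x_iy_j-x_jy_i$ of $J_{G_n}$ are quadrics, one for each edge. They are $\K$-linearly independent, since distinct edges give binomials with disjoint monomial supports. Hence $\beta_{1,2}(S_{G_n}/J_{G_n})$ equals the number of edges, which is $|E(G_n)|=n(n-2)$ by the description of $G_n$ in Section~\ref{sec:family}. For $i\geq 2$ we have $\beta_{i,i+1}(S/J)=\beta_{i-1,i+1}(J)$. The plan is to show that the linear strand of $J_{G_n}$ vanishes beyond homological degree $0$, using the known description of the linear strand of a binomial edge ideal.

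The key tool is the result of Herzog, Kiani and Saeedi Madani on the linear strand of binomial edge ideals, which I would cite. It states that
\[
\beta_{i,i+2}(J_G)=(i+1)f_{i+1}(\Delta(G)),
\]
where $f_{k}(\Delta(G))$ is the number of cliques of $G$ on $k+1$ vertices, that is, the number of $(k)$-faces of the clique complex. Thus $\beta_{i,i+1}(S_G/J_G)=i\,f_{i}(\Delta(G))$, the count of $i$-dimensional faces, namely cliques with $i+1$ vertices. For $i=1$ this recovers $|E(G)|$. For $i\geq2$ it counts cliques of size at least $3$, weighted by $i$.

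Since $G_n$ is bipartite, it contains no triangle, so $f_i(\Delta(G_n))=0$ for all $i\geq2$. This gives $\beta_{i,i+1}=0$ for $i\geq 2$.

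If one prefers to avoid the citation, I would argue $\beta_{2,3}=0$ directly: a linear first syzygy among the $f_{ij}$ must, in the column multigrading, live in a degree $\alpha$ with $|\alpha|=3$. Such a degree is supported either on three distinct vertices or has a repeated coordinate. In either case the relevant $f_{ij}$ involve only vertices in $\operatorname{supp}(\alpha)$, so the syzygy lives in the binomial edge ideal of the induced subgraph on at most three vertices. Such a subgraph of a bipartite graph is a forest, namely a path $P_3$ or smaller, whose binomial edge ideal is a complete intersection and has no linear syzygies. Vanishing for higher $i$ then follows because the linear strand stops once a term is zero.

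The only real obstacle is correctly invoking the linear-strand theorem, in particular its indexing conventions. The bipartite/triangle-free observation does the rest.
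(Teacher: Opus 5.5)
Your proposal is correct, but your main route differs from the paper's.

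\textbf{Your route.} You invoke the Herzog--Kiani--Saeedi Madani formula $\beta_{i,i+2}(J_G)=(i+1)f_{i+1}(\Delta(G))$ for the linear strand. Your indexing is right: it gives $\beta_{i,i+1}(S_G/J_G)=i\,f_i(\Delta(G))$. For $i=1$ this is the edge count, and for $i\geq2$ it counts cliques on $i+1\geq3$ vertices. Since $G_n$ is bipartite, it is triangle-free, so the whole strand beyond $i=1$ vanishes at once.

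\textbf{The paper's route.} The paper argues from scratch. It uses the column multigrading to reduce a linear syzygy to a multidegree supported on at most three vertices, where bipartiteness allows at most two edge binomials. It then shows that $af+bg=0$, with $f,g$ coprime irreducible quadrics and $a,b$ linear forms, forces $a=b=0$. Finally it propagates $\beta_{2,3}=0$ to all $\beta_{i,i+1}$ with $i\geq2$ by the standard minimality argument. Your fallback paragraph is essentially this argument, with ``$J_{P_3}$ is a complete intersection'' replacing the coprimality step.

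\textbf{Comparison.} The citation buys brevity and a uniform statement valid for every triangle-free graph, at the cost of relying on a substantial external theorem. The paper's argument is elementary and self-contained, and it also uses only triangle-freeness of $G_n$.
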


\begin{proof}
	The edge binomials form a minimal quadratic generating
	set of $J$. Since $|E(G_n)|=n(n-2)$, we obtain
	$\beta_{1,2}(S/J)=n(n-2)$. Equip $S$ with the column grading
	$\deg x_w=\deg y_w=e_w$.
	A relation with linear coefficients decomposes into
	multihomogeneous relations of total degree three,
	each supported on at most three vertices.
	Since $G_n$ is bipartite, at most two edge generators
	occur in such a component.
	A relation involving only one generator has the form $af=0$ with
	$f\neq0$, and hence $a=0$ since $S$ is a domain.
	If two generators $f,g$ occur, they are relatively prime irreducible
	quadrics. Thus $af+bg=0$ implies $f\mid b$ and $g\mid a$.
	Since $a$ and $b$ are linear, both must vanish. Hence $\beta_{2,3}(S/J)=0$.
	
	Let $\mathcal F_\bullet$ be the minimal graded free resolution
	of $S/J$. Its differentials have entries in the homogeneous maximal
	ideal $\mathfrak m$, and all shifts in $\mathcal F_i$ are at least
	$i+1$ for $i\geq1$.
	The preceding vanishing shows that the shifts in $\mathcal F_2$
	are at least four. Inductively, suppose that all shifts in $\mathcal F_{i-1}$
	are at least $i+1$.
	A basis element $z\in \mathcal F_i$ of degree $i+1$ would satisfy
	$\partial_i(z)=0$ by homogeneity and minimality.
	Exactness would then give
	\[
	z\in\operatorname{im}\partial_{i+1}
	\subseteq\mathfrak m \mathcal F_i,
	\]
	which is impossible for a basis element.
	Therefore $\beta_{i,i+1}(S/J)=0$ for every $i\geq2$.
\end{proof}

\begin{theorem}\label{thm:second-betti}
Set $m=n(n-2)$ and let \( q_n \) denote the number of induced four-cycles in \( G_n \). Then
\[
 q_n=n\binom{n-3}{2}+\frac{n(n-3)}2\binom{n-4}{2}.
\]
Moreover, \(  \beta_{2,4}(S_{G_n}/J_{G_n})
=\binom{m}{2}+2n\binom{n-2}{3}+3q_n. \)
\end{theorem}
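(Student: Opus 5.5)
The plan is to compute $\beta_{2,4}$ one multidegree at a time, using the column multigrading on $S=S_{G_n}$. Each graded piece $\operatorname{Tor}_2^S(S/J,\K)_\alpha$ with $|\alpha|=4$ depends only on the induced subgraph $G_n[\operatorname{supp}\alpha]$, which has at most four vertices. So $\beta_{2,4}$ splits as a sum of local contributions over vertex sets $W$ with $|W|\le 4$.

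\emph{Step 1: the expected local formula.} Since Proposition~\ref{prop:linear} gives $\beta_{2,3}(S/J)=0$, every quadratic syzygy of degree $4$ is a genuine syzygy between two edge binomials with linear or constant coefficients. I would show that for a triangle-free (here bipartite) graph $G$ with $m$ edges,
\[
\beta_{2,4}(S_G/J_G)=\binom{m}{2}+c(G)+3\,q(G),
\]
where $c(G)$ is the number of induced claws $K_{1,3}$ and $q(G)$ the number of induced $4$-cycles. The argument would proceed as follows.
\begin{enumerate}[label=\textup{(\alph*)}]
\item The Koszul syzygies $f_e\,g-g\,f$, one for each pair of edges, are linearly independent in homology. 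They account for $\binom m2$. For two edges sharing no vertex this is everything, because the two ideals live in disjoint variables.
\item Any additional minimal syzygy of degree $4$ has support on an induced subgraph with at most four vertices and at least three edges. Since the graph is bipartite, no such subgraph contains a triangle.
\item I would then check the small graphs. Paths $P_3$ and $P_4$ give complete intersections, so they contribute nothing extra. The claw contributes exactly one extra syzygy, since $\beta_{2,4}(S/J_{K_{1,3}})=4$. The $4$-cycle contributes three extra syzygies, since $\beta_{2,4}(S/J_{C_4})=9$. Both values can be checked with an explicit resolution or the known Betti tables.
\end{enumerate}
Extra syzygies living in distinct multidegrees add up, so the excess over the Koszul count is additive over induced claws and induced $C_4$'s. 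This step is the main obstacle. The delicate points are justifying that every non-Koszul generator is \emph{local}, and getting the exact small-case values. The cleanest route is to cite the known general description of $\beta_{2,4}$ for binomial edge ideals in terms of induced $K_{1,3}$, $C_4$ and triangle-containing subgraphs, and then specialize it to the triangle-free case.

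\emph{Step 2: counting claws.} Since $G_n$ is bipartite and $(n-2)$-regular, any three neighbours of a vertex are pairwise nonadjacent. Hence every $3$-subset of a neighbourhood gives an induced claw, and there are $2n\binom{n-2}{3}$ of them.

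\emph{Step 3: counting induced four-cycles.} A four-cycle in a bipartite graph is automatically induced. It corresponds to a pair $\{u_a,u_b\}\subseteq X$ together with a pair of common $G_n$-neighbours in $Y$. By \eqref{eq:adjacency}, the set of common neighbours is $Y\setminus\{v_{a-1},v_a,v_{b-1},v_b\}$.
\begin{enumerate}[label=\textup{(\alph*)}]
\item If $b=a+1$, then $v_a$ is counted twice and three vertices are excluded, leaving $n-3$ common neighbours. There are $n$ such pairs, contributing $n\binom{n-3}{2}$.
\item Otherwise the four excluded vertices are distinct, leaving $n-4$ common neighbours. There are $\binom n2-n=\frac{n(n-3)}2$ such pairs, contributing $\frac{n(n-3)}2\binom{n-4}{2}$.
\end{enumerate}
Each $4$-cycle has exactly one pair of vertices in $X$, so it is counted once. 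This gives the formula for $q_n$.

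Substituting Steps 2 and 3 into the formula of Step 1 with $m=n(n-2)$ yields the stated value of $\beta_{2,4}(S_{G_n}/J_{G_n})$.
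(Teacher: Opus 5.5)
Your proposal is correct and follows essentially the same route as the paper. Both arguments localize $\beta_{2,4}$ by column multidegree to induced subgraphs on at most four vertices, obtain $\binom m2$ from the Koszul pairs plus one extra syzygy per induced claw and three per induced $C_4$, and then count claws and four-cycles in the same way. The only difference is in how the local values are obtained. The paper derives them from the dimension count $4r-16+\prod_j(s_j+1)$ in the squarefree multidegree of a four-vertex set, using the multigraded Hilbert function. You read them off from the complete-intersection property of path forests and the values $\beta_{2,4}(S/J_{K_{1,3}})=4$ and $\beta_{2,4}(S/J_{C_4})=9$, both of which are correct.

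One slip to fix in Step 1: a degree-$4$ syzygy among the quadrics has quadratic, not linear or constant, coefficients, and it need not involve only two binomials (for the claw it is the three-term Plücker relation). The correct consequence of $\beta_{2,3}=0$ is simply that the first syzygy module vanishes in degree $3$, so every degree-$4$ syzygy is minimal.
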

\begin{proof}
	Since $G_n$ is triangle-free and has $m$ edges,
	Proposition~\ref{prop:linear} gives $\beta_{2,3}(S/J)=0$.
	Hence there are no second syzygies in total degree $3$, and the
	degree-$4$ part of the second syzygy module contributes directly to
	$\beta_{2,4}(S/J)$.
	
	In multidegree $2e_a+e_b+e_c$, the only contribution is the Koszul
	syzygy for incident edges $ab,ac$, when both are present. Other
	multidegrees with repeated vertices contribute nothing.
	
	On four distinct vertices $W$, each edge contributes four basis
	monomials to the degree-$\sum_{w\in W}e_w$ part of the free module
	of generators. If $H=G_n[W]$ has $r$ edges and component orders
	$s_1,\ldots,s_c$, the corresponding part of $S/J_H$ has dimension
$\prod_j(s_j+1)$. Indeed, the edge relations identify the column
monomials precisely according to the number of $x$'s in each
component. Moving an $x$ along an edge connects all configurations
with a fixed such number. The syzygy dimension is therefore
$4r-16+\prod_j(s_j+1)$.
The triangle-free graphs on four vertices are forests or $C_4$.
Among the triangle-free graphs on four vertices, the
squarefree syzygy dimension is one for $2K_2$, $P_4$,
and $K_{1,3}$, and five for $C_4$; it is zero for the
remaining graphs.
The graphs $2K_2$ and $P_4$ each contain one pair of
disjoint edges, while $C_4$ contains two.
Thus these contributions amount to one for each pair
of disjoint edges, one additional contribution for each
induced claw, and three additional contributions for each
induced four-cycle.
Including the incident-edge Koszul syzygies gives
\[
 \beta_{2,4}=\binom m2+\#\{\text{induced }K_{1,3}\}
                      +3\#\{\text{induced }C_4\}.
\]
In $G_n$, every induced claw is centered at a vertex, and each
vertex has degree $n-2$. Hence the number of induced claws is \( 2n\binom{n-2}{3}. \)
It remains to count the induced $4$-cycles. Fix an unordered pair
$\{u_i,u_j\}\subseteq X$. If $u_i$ and $u_j$ are cyclically
consecutive in $X$, then they have $n-3$ common neighbours in $Y$.
There are $n$ such pairs. If they are not cyclically consecutive,
then they have $n-4$ common neighbours, and there are \( \binom{n}{2}-n=\frac{n(n-3)}2 \)
such pairs. Choosing two common neighbours of a pair
$\{u_i,u_j\}$ produces an induced $4$-cycle, and every induced
$4$-cycle is obtained uniquely in this way. Therefore
\[
q_n
=
n\binom{n-3}{2}
+
\frac{n(n-3)}2\binom{n-4}{2}.
\]
Substituting the claw and $4$-cycle counts into the preceding formula
gives
\[
\beta_{2,4}(S/J)
=
\binom{m}{2}
+
2n\binom{n-2}{3}
+
3q_n.
\]
\end{proof}

\begin{proposition}\label{prop:Hilbert}
 For each $U\subseteq V(G_n)$, let $\pi(U)$
	denote the collection of vertex sets of the connected
	components of $G_n[U]$. Then the Hilbert series of
	$S_{G_n}/J_{G_n}$ is

	\[
	H_{S_{G_n}/J_{G_n}}(t)
	=\sum_{U\subseteq V(G_n)}
	\frac{t^{|U|}\prod_{C\in\pi(U)}(|C|+1-t)}
	{(1-t)^{|U|+|\pi(U)|}}.
	\]
Here $\pi(\varnothing)=\varnothing$, and the empty product
is interpreted as $1$, so the empty-set summand is $1$.
\end{proposition}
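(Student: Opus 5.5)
The plan is to compute the Hilbert series through the column multigrading $\deg x_w=\deg y_w=e_w$ and sort monomials by their support. Since $J$ is generated by binomials that are homogeneous in this grading, $S/J$ is $\mathbb N^{V(G_n)}$-graded. Hence
\[
H_{S/J}(t)=\sum_{\alpha\in\mathbb N^{V(G_n)}}\dim_{\K}(S/J)_\alpha\, t^{|\alpha|}.
\]
We group the multidegrees $\alpha$ according to $U=\operatorname{supp}(\alpha)$.

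\emph{Localizing to $U$.} In multidegree $\alpha$ with support $U$, only variables indexed by $U$ occur. The generators of $J$ that can contribute are those edge binomials whose edge lies in $G_n[U]$: any binomial involving a vertex outside $U$, multiplied by anything, has that vertex in its support. Therefore $(S/J)_\alpha=(S_U/J_{G_n[U]})_\alpha$, where $S_U=\K[x_w,y_w:w\in U]$.

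\emph{Factoring over components.} The ring $S_U/J_{G_n[U]}$ is the tensor product over $\K$ of the rings $S_C/J_{G_n[C]}$ for $C\in\pi(U)$. For each component we need $\dim(S_C/J_{H})_\beta$, where $H=G_n[C]$ is connected and $\beta$ has full support $C$. I would use the argument already sketched in the proof of Theorem~\ref{thm:second-betti}. A monomial of column degree $\beta$ picks, at each vertex $w$, $k_w$ copies of $x_w$ and $\beta_w-k_w$ copies of $y_w$. The edge relations let one move an $x$ across an edge, so modulo $J_H$ the classes depend only on $\sum_w k_w\in\{0,\ldots,|\beta|\}$. To show that these $|\beta|+1$ classes are linearly independent, I would pass to the prime component $P_\varnothing(H)=J_{\widetilde H}$, which contains $J_H$. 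Modulo $J_{\widetilde H}$ the ring is the determinantal ring of the $2\times|C|$ matrix, and its $\beta$-graded piece has dimension exactly $|\beta|+1$, with one standard monomial for each value of the total $x$-count. This is classical, via the Segre-type parametrization $x_w\mapsto s z_w$, $y_w\mapsto t z_w$. Consequently $\dim(S_C/J_H)_\beta=|\beta|+1$. I expect this step, that independence survives modulo $J_H$ and not merely modulo $J_{\widetilde H}$, to be the main point requiring care; the surjection onto the determinantal quotient settles it.

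\emph{Summing.} It remains to evaluate, for each component $C$,
\[
\sum_{\beta:\,\operatorname{supp}\beta=C}(|\beta|+1)t^{|\beta|}.
\]
Write $s=|C|$ and $d=|\beta|$. The number of $\beta$ with support $C$ and total degree $d$ is $\binom{d-1}{s-1}$. So the sum becomes $\sum_d\binom{d-1}{s-1}(d+1)t^d$. Split $(d+1)$ as $d+1$; each piece is a derivative of $t^s/(1-t)^s$. This gives
\[
\frac{t^s}{(1-t)^s}+\frac{s\,t^s}{(1-t)^{s+1}}
=\frac{t^s\,(s+1-t)}{(1-t)^{s+1}}.
\]
Multiplying over $C\in\pi(U)$ yields $t^{|U|}\prod_C(|C|+1-t)/(1-t)^{|U|+|\pi(U)|}$. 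Summing over $U$, with the empty support contributing the constant $1$, gives the stated formula.
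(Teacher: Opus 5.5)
Your argument is correct and shares the paper's skeleton: column multigrading, grouping by support, the count $\prod_{C\in\pi(U)}(1+\sum_{w\in C}\alpha_w)$, and the same generating-function computation. The differences lie in how the dimension count is justified.

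You first reduce to $S_U/J_{G_n[U]}$ and factor it as a tensor product over components. The paper instead treats all components of $G_n[U]$ simultaneously inside $S_\alpha$.

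More substantively, you get linear independence of the $|\beta|+1$ classes from the parametrization $x_w\mapsto sz_w$, $y_w\mapsto tz_w$. (Rename $t$ there, since it clashes with the Hilbert-series variable.) The paper notes that $J_\alpha$ is spanned by differences of monomials related by moves. Hence $\dim_{\K}(S/J)_\alpha$ equals the number of move classes, and invariance of the component totals handles independence and spanning at once. For your lower bound you need only $\varphi(J_H)=0$, not the classical equality $\ker\varphi=J_{\widetilde H}$. With that equality, though, your route gives the sharper statement $(J_H)_\beta=(J_{\widetilde H})_\beta$ for every $\beta$ with $\operatorname{supp}\beta=C$.

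Finally, the step you flag as delicate is the easy one; the step that actually needs an argument is your spanning claim. A move across $\{u,v\}$ is legal only when $k_u\geq1$ and $k_v<\beta_v$. So you must show that any two bounded exponent vectors with the same total are joined by legal moves. This is where $\operatorname{supp}\beta=C$ enters. To pass a unit from $u$ to $w$ through an intermediate vertex $v$, do the move $v\to w$ first if $k_v\geq1$, and the move $u\to v$ first if $k_v=0$; the latter is legal precisely because $\beta_v\geq1$. The paper supplies this through its spanning-tree argument with positive capacities. The sketch in Theorem~\ref{thm:second-betti} that you cite covers only squarefree multidegrees, so you should add this argument explicitly.
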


\begin{proof}
	Write $S=S_{G_n}$ and $J=J_{G_n}$, and equip $S$ with
	the multigrading $\deg x_w=\deg y_w=e_w$.
	Fix a multidegree
	$\alpha\in\mathbb N^{V(G_n)}$ with support
	$U=\{w:\alpha_w>0\}$.
	Every monomial of multidegree $\alpha$ has the form
	\[
	\prod_{w\in U}x_w^{k_w}y_w^{\alpha_w-k_w},
	\]
	where \( 0\leq k_w\leq\alpha_w \). For an edge $\{u,v\}$ of $G_n[U]$, the relation
	$x_u y_v-x_v y_u\in J$ identifies two such monomials
	whenever their exponent vectors differ by moving one
	unit from $k_u$ to $k_v$, or conversely, within the
	prescribed bounds.
	Within each connected component $C$, these moves
	preserve the sum $\sum_{w\in C}k_w$.
	
	Conversely, because each component is connected, the edge moves
	generate all transfers of units between vertices of that component.
	Using a spanning tree and moving units successively from leaves to the
	root shows that any two bounded exponent vectors having the same total
	on each connected component are connected by these moves.
	Indeed, all capacities $\alpha_w$ are positive on
	$U$, so units can be moved along paths of a spanning
	tree, successively matching the desired exponents
	at its leaves.
	
	The multihomogeneous component $J_\alpha$ is spanned
	by the differences arising from these moves.
	Consequently, a basis of $(S/J)_\alpha$ is indexed by
	the possible totals of the $x$-exponents on the
	components of $G_n[U]$. Thus
	\[
	\dim_{\K}(S/J)_\alpha
	=
	\prod_{C\in\pi(U)}
	\left(1+\sum_{w\in C}\alpha_w\right).
	\]
	
	For a component with $r$ vertices, summing over all
	positive column degrees gives
	\[
	\begin{aligned}
		\sum_{\alpha_1,\ldots,\alpha_r\geq1}
		(1+\alpha_1+\cdots+\alpha_r)
		t^{\alpha_1+\cdots+\alpha_r}&=
		\left(\frac{t}{1-t}\right)^r
		+r\frac{t}{(1-t)^2}
		\left(\frac{t}{1-t}\right)^{r-1}=
		\frac{t^r(r+1-t)}{(1-t)^{r+1}}.
	\end{aligned}
	\]
	Multiplying these expressions over the components
	of $G_n[U]$, and using
	$\sum_{C\in\pi(U)}|C|=|U|$, yields the summand
	corresponding to $U$. Summing over all supports
	$U\subseteq V(G_n)$ proves the formula.
\end{proof}
\begin{example}
	Let $n=5$. Applying Proposition~\ref{prop:Hilbert} to
	$G_5$, the bipartite complement of $C_{10}$, gives
	\[
	H_{S_{G_5}/J_{G_5}}(t)
	=
	\frac{
		1+9t+30t^2+30t^3-50t^4-118t^5
		+48t^6+100t^7-10t^8-30t^9
	}{(1-t)^{11}}.
	\]
	The numerator evaluates to $10$ at $t=1$, in agreement
	with $\dim(S_{G_5}/J_{G_5})=11$ and
	$e(S_{G_5}/J_{G_5})=10$.
\end{example}

\subsection{Projective dimension and depth}\label{sec:depth}
The big-height computation immediately gives a lower bound for the
projective dimension.

\begin{proposition}\label{prop:pd-depth-bounds}
For $n\geq5$, \(  4n-8\leq\pd(S_{G_n}/J_{G_n})\leq4n-6 \)
and hence,
\( 6\leq\depth(S_{G_n}/J_{G_n})\leq8. \)

\end{proposition}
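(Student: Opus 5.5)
The two bounds are obtained independently, and the depth bounds then follow from the Auslander--Buchsbaum formula $\depth+\pd=4n$ in $S_{G_n}$, which has $4n$ variables.

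\emph{Lower bound.} For any homogeneous ideal $I$ one has $\pd(S/I)\geq\bight(I)$. Indeed, each minimal prime $P$ is associated, and localizing at $P$ gives $\pd(S/I)\geq\pd_{S_P}(S_P/I_P)$. Since $I_P$ is $PS_P$-primary, $S_P/I_P$ has depth $0$. The Auslander--Buchsbaum formula over $S_P$ then gives $\pd_{S_P}(S_P/I_P)=\dim S_P=\height P$. Combined with $\bight(J_{G_n})=4n-8$ from Corollary~\ref{cor:height-bight}, this yields $\pd\geq 4n-8$, equivalently $\depth\leq 8$.

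\emph{Upper bound.} We must show $\depth(S_{G_n}/J_{G_n})\geq 6$. The plan is to invoke the known small-depth classifications alluded to at the start of this subsection. For a connected non-complete graph $G$ on $m$ vertices, it is known that $\depth S_G/J_G\geq \kappa(G)+2$; this is due to Banerjee and N\'u\~nez-Betancourt, with $\depth\geq 4$ for non-complete connected graphs. Proposition~\ref{prop:vertex-connectivity} gives $\kappa(G_n)=n-2$, so this bound gives $\depth\geq n\geq 5$, which is not quite 6. This is still one short only when $n=5$; for $n\geq 6$ it already gives $\depth\geq 6$. If the cited lower bound is instead $\depth\geq\kappa+2$ with no improvement, then for $n=5$ I would use the Rouzbahani Malayeri--Saeedi Madani--Kiani characterization of graphs with depth $4$ and $5$. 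These graphs are described via cut sets and connected components, for instance the existence of a vertex whose removal leaves a small specific structure. I would check from the classification in Theorem~\ref{thm:cut-point-classification} that $G_5$ satisfies none of these conditions, hence has depth at least $6$. Alternatively, for $n=5$ I would cite the computed Hilbert series or a direct computer-free argument via the exact sequence
\[
0\to S/J\to S/P_\varnothing\oplus S/Q\to S/(P_\varnothing+Q)\to 0,
\]
where $Q=\bigcap_{T\neq\varnothing}P_T$, and use the depth lemma. Here $S/P_\varnothing$ is Cohen--Macaulay of dimension $2n+1$.

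The main obstacle is precisely this depth-$\geq6$ step: the connectivity bound alone falls short by one in the boundary case $n=5$. I would first try the small-depth classification, verifying that $G_n$ has no cut-set configuration forcing depth $4$ or $5$. This is where the explicit list of cut point sets, none with $|U|\leq 4$, should make the check routine. Once both bounds are established, Auslander--Buchsbaum gives $6\leq\depth\leq 8$.
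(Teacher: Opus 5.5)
\textbf{There is a genuine gap in the depth $\geq 6$ step.} Your lower bound $\pd\geq\bight(J_{G_n})=4n-8$ is correct and is the paper's argument; your localization proof that $\pd(S/I)\geq\bight(I)$ is fine.

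The inequality you attribute to Banerjee and N\'u\~nez-Betancourt points the wrong way. Their theorem is an \emph{upper} bound, $\depth S_G/J_G\leq|V(G)|-\kappa(G)+2$ for connected non-complete $G$. For $G_n$ it gives only $\depth\leq n+4$. A lower bound $\depth\geq\kappa(G)+2$ is refuted by $G_n$ itself. With $\kappa(G_n)=n-2$ it would give $\depth\geq n$, while your first step gives $\depth\leq8$, which is impossible for $n\geq9$. So your argument for $n\geq6$ fails.

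Your $n=5$ fallback is also only a plan. You never state the depth-$4$ and depth-$5$ characterizations, so nothing is actually checked. The exact-sequence alternative would need $\depth S/Q\geq6$ and $\depth S/(P_\varnothing+Q)\geq5$, and you have neither.

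What is needed, and what the paper does uniformly for all $n\geq5$ with no special case at $n=5$, is the following. Start from $\depth S_H/J_H\geq4$ for connected graphs on at least four vertices, then exclude depths $4$ and $5$ using the small-depth classifications.
\begin{enumerate}
\item Depth $4$ occurs exactly for $H'\ast2K_1$. This forces two nonadjacent vertices of degree $|V|-2=2n-2$.
\item Depth $5$ occurs exactly for the $D_5$-type graphs. The types $H'\ast3K_1$ and $H'\ast(K_1\sqcup K_2)$ force a vertex of degree $2n-3$.
\item The remaining $D_5$-type needs nonadjacent $a,b$ with $N(a)\cup N(b)=V\setminus\{a,b\}$. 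This is impossible because $|N(a)\cup N(b)|\leq2(n-2)<2n-2$.
\end{enumerate}
Since $G_n$ is $(n-2)$-regular on $2n$ vertices, every case fails, and $\depth\geq6$ follows. The check is by vertex degrees, not by the cut-point classification you suggested.
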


\begin{proof}
For every homogeneous ideal, the projective dimension of its quotient
is at least the big height.  Corollary~\ref{cor:height-bight} therefore 
implies that \( \pd(S_{G_n}/J_{G_n})\geq4n-8. \) Since $S_{G_n}$ has dimension $4n$, the Auslander--Buchsbaum formula
yields \( \depth(S_{G_n}/J_{G_n})\leq8. \)
It remains to prove the lower bound for the depth.  By
\cite{SmallDepth}, a connected graph $H$ on at least four vertices
satisfies $\depth(S_H/J_H)=4$ precisely when
$H=H'\ast2K_1$ for some graph $H'$.  Such a graph contains two
nonadjacent vertices of degree $|V(H)|-2$.  Since every vertex of $G_n$
has degree $n-2<2n-2$, the graph $G_n$ is not of this form.  Hence
$\depth(S_{G_n}/J_{G_n})\neq4$.

The graphs whose binomial edge ideals have depth five are the
$D_5$-type graphs described in \cite{OnDepth}.  We verify that $G_n$
is not of any of the three possible types.  It cannot be $H\ast3K_1$, because each of the three independent
vertices would have degree $2n-3$. It cannot be
$H\ast(K_1\mathbin{\dot\cup}K_2)$ either, because the isolated vertex
of $K_1\mathbin{\dot\cup}K_2$ has degree $2n-3$ in the join, whereas
every vertex of $G_n$ has degree $n-2$.

For the remaining $D_5$-type, there would be two nonadjacent vertices
$a,b$ satisfying \( V(G_n)\setminus\{a,b\}=N_{G_n}(a)\cup N_{G_n}(b). \) However,
\[
\bigl|N_{G_n}(a)\cup N_{G_n}(b)\bigr|
\leq2(n-2)<2n-2,
\]
which is impossible.  Thus $G_n$ is not a $D_5$-type graph,
and therefore its depth is not five. The general lower bound
$\depth(S_H/J_H)\geq4$ for connected graphs on at least four vertices,
together with the
classifications just used, yields
\[
 \depth(S_{G_n}/J_{G_n})\geq6.
\]
The projective-dimension upper bound follows again from the
Auslander--Buchsbaum formula.
\end{proof}

The preceding proposition leaves three possibilities:
\[
 \pd(S_{G_n}/J_{G_n})\in\{4n-8,4n-7,4n-6\}.
\]
The crown-graph argument in \cite{Crown} rules out only depths four and
five, which is sufficient there because the crown graph has big height
$4n-6$.  For $G_n$, two additional possibilities must be excluded.
The preceding proposition leaves three possible values
for the depth. Thus
\[
\depth(S_{G_n}/J_{G_n})\in\{6,7,8\},
\qquad
\pd(S_{G_n}/J_{G_n})\in\{4n-8,4n-7,4n-6\}.
\]
To establish
\[
\pd(S_{G_n}/J_{G_n})=\bight(J_{G_n})=4n-8,
\]
it remains to prove
\[
H_{\mathfrak m}^{6}(S_{G_n}/J_{G_n})
=
H_{\mathfrak m}^{7}(S_{G_n}/J_{G_n})
=
0,
\]
where $\mathfrak m$ is the homogeneous maximal ideal.

%

\section{Connected domination and Vasconcelos numbers}\label{sec:vnumber}

A set $D\subseteq V(G)$ is a \emph{connected dominating set} if
$G[D]$ is connected and every vertex outside $D$ is adjacent to a
vertex in $D$.  The minimum cardinality of such a set is the connected
domination number $\gamma_c(G)$. We first compute the connected domination number of $G_n$.

\begin{proposition}\label{prop:connected-domination}
	The connected domination number of $G_n$ is
	$\gamma_c(G_n)=4$.
\end{proposition}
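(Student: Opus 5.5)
We prove $\gamma_c(G_n)\le 4$ by exhibiting a connected dominating set of size four, and $\gamma_c(G_n)\ge 4$ by ruling out connected dominating sets of size at most three.

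\emph{Upper bound.} Take $u_0,u_2\in X$ and $v_a,v_b\in Y$, chosen so that $G_n[D]\cong C_4$ and $D$ dominates. By \eqref{eq:adjacency}, $u_0$ is nonadjacent only to $v_0,v_{n-1}$, and $u_2$ only to $v_2,v_1$. Hence $u_0,u_2$ together dominate all of $Y$, since the missing sets $\{v_{n-1},v_0\}$ and $\{v_1,v_2\}$ are disjoint (this needs $n\ge4$).

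For $X$, the vertex $v_j$ misses exactly $u_j,u_{j+1}$. Choose $v_a,v_b$ with $\{u_a,u_{a+1}\}\cap\{u_b,u_{b+1}\}=\varnothing$, and with both adjacent to $u_0$ and $u_2$, i.e.\ $a,b\notin\{n-1,0,1,2\}$. Then $v_a,v_b$ dominate $X$, and $D$ induces $K_{2,2}$, which is connected.

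With $n\ge5$, the admissible indices $\{3,\dots,n-2\}$ contain two non-consecutive values once $n\ge 7$; for example $a=3$, $b=5$ works when $n\ge7$. For $n=5,6$ the available indices are too few, so there I instead choose $D$ to induce a path $P_4$ rather than a $C_4$. A concrete check: for $n=5$, take $D=\{u_0,v_2,u_3,v_0\}$. Its edges are $u_0v_2$ (valid since $2\notin\{0,4\}$), $v_2u_3$ (valid since $2\notin\{3,2\}$ fails). So this choice is wrong, and I would search the small cases by hand in the same manner, with the same style of verification. This small-$n$ bookkeeping is the fiddly part of the upper bound.

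\emph{Lower bound.} Let $D$ be a connected dominating set with $|D|\le3$. If $|D|=1$, its single vertex has degree $n-2<2n-1$. If $D$ lies in one class, then $G[D]$ is disconnected unless $|D|=1$; so $D$ must meet both classes.

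If $|D|=2$, then $D=\{u_i,v_j\}$. This set dominates at most $2(n-2)+2=2n-2<2n$ vertices.

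If $|D|=3$, say $D=\{u_i,u_k,v_j\}$, then $v_j$ is the only $Y$-vertex in $D$ and must dominate $X\setminus\{u_i,u_k\}$. But $v_j$ misses $u_j,u_{j+1}$, so domination forces $\{u_i,u_k\}=\{u_j,u_{j+1}\}$. Those two vertices are then nonadjacent to $v_j$, so $G[D]$ is disconnected. This contradiction gives $\gamma_c\ge4$.

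The main obstacle is the upper bound construction for the small cases $n=5,6$, where the index constraints are tight. The lower bound is a straightforward degree and neighbourhood count using \eqref{eq:adjacency}.
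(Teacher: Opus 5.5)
\paragraph{Lower bound.} Your lower bound is correct and is essentially the paper's argument. The paper also reduces to $D\cap Y=\{v_j\}$; it uses connectivity to push $u_j,u_{j+1}$ out of $D$ and then observes they are undominated. You apply the two facts in the opposite order: domination forces $\{u_i,u_k\}=\{u_j,u_{j+1}\}$, and then $G_n[D]$ is edgeless. Your count of at most $2n-2$ dominated vertices for $|D|=2$ is also fine.

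\paragraph{The gap: the upper bound for $n=5,6$.} The statement is for all $n\geq5$, but you only exhibit a connected dominating set of size four for $n\geq7$. For $n=5,6$ your candidate $\{u_0,v_2,u_3,v_0\}$ fails, as you noticed, because $u_3v_2\notin E(G_n)$. Saying you ``would search the small cases by hand'' does not establish $\gamma_c(G_n)\leq4$ there.

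The obstruction is self-imposed. You required both $Y$-vertices to be adjacent to both $u_0$ and $u_2$, so that $G_n[D]\cong C_4$, but connectivity only needs a spanning path. The paper takes
\[
D=\{u_0,u_2,v_0,v_3\},
\]
which induces the path $u_0-v_3-u_2-v_0$ for every $n\geq5$. Indeed, $v_3$ is adjacent to $u_0$ because $3\notin\{0,n-1\}$, and to $u_2$ because $3\notin\{2,1\}$. The vertex $v_0$ is adjacent to $u_2$ but not to $u_0$, so $D$ induces a path rather than a $4$-cycle.

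Since $N_{F_n}(v_0)=\{u_0,u_1\}$ and $N_{F_n}(v_3)=\{u_3,u_4\}$ are disjoint once $n\geq5$, the vertices $v_0,v_3$ dominate $X$. The vertices $u_0,u_2$ dominate $Y$ exactly as in your argument. This one set works uniformly for all $n\geq5$ and makes your case split at $n=7$ unnecessary.
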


\begin{proof}
	We first show that no set of at most three vertices
	is a connected dominating set.
	A singleton does not dominate $G_n$, since it has no
	neighbours in its own bipartition class.
	
	Now let $D\subseteq V(G_n)$ satisfy $2\leq |D|\leq3$,
	and suppose that $G_n[D]$ is connected.
	Then $D$ meets both bipartition classes, and one of
	these classes contains exactly one vertex of $D$.
	By symmetry, we may assume that $D\cap Y=\{v_j\}$.
	Connectivity forces every vertex of $D\cap X$ to be
	adjacent to $v_j$ in $G_n$.
	Consequently, the two vertices of
	$\N_{F_n}(v_j)=\{u_j,u_{j+1}\}$ lie outside $D$.
	Neither has a neighbour in $D$, because neither is
	adjacent to $v_j$ and all other vertices of $D$
	belong to $X$.
	Thus $D$ is not a dominating set, proving that
	$\gamma_c(G_n)\geq4$.
	
	For the reverse inequality, consider \( D=\{u_0,u_2,v_0,v_3\}. \)
	The induced subgraph $G_n[D]$ is the path
	$u_0-v_3-u_2-v_0$, so it is connected.
	Moreover, \( \N_{F_n}(v_0)=\{u_0,u_1\}, \)
     and \( \N_{F_n}(v_3)=\{u_3,u_4\}. \)
	These sets are disjoint, so every vertex of $X$
	is adjacent in $G_n$ to at least one of $v_0,v_3$. Similarly, \( \N_{F_n}(u_0)=\{v_0,v_{n-1}\} \) and \( \N_{F_n}(u_2)=\{v_1,v_2\} \)
	are disjoint, and hence every vertex of $Y$ is
	adjacent in $G_n$ to at least one of $u_0,u_2$.
	Therefore $D$ is a connected dominating set, giving
	$\gamma_c(G_n)\leq4$.
	The two inequalities establish the assertion.
\end{proof}

\begin{definition}
For $P\in\Min(J_G)$, the local $v$-number of $J_G$ at $P$ is
\[
v_P(J_G)
=
\min\{\deg f:f\text{ is homogeneous and }J_G:f=P\}.
\]
We call such an $f$ a separator for $P$. The $v$-number of $J_G$ is
\[
v(J_G)=\min_{P\in\Min(J_G)}v_P(J_G).
\]
Since $J_G$ is radical, $\Ass(S_G/J_G)=\Min(J_G)$.
\end{definition}
It was proved in \cite{ConnectedDomination} that for every connected
noncomplete graph $G$, \(   v_{P_\varnothing(G)}(J_G)=\gamma_c(G). \) Hence, Proposition~\ref{prop:connected-domination} gives the
following.

\begin{corollary}\label{cor:empty-local-v}
For $n\geq5$, \(  v_{P_\varnothing(G_n)}(J_{G_n})=4. \)
In particular, $ v(J_{G_n})\leq4$.
\end{corollary}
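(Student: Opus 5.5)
The first assertion follows directly from the result of \cite{ConnectedDomination} quoted just before the statement, so the plan is mainly to check that its hypotheses hold for $G_n$. That result requires a connected noncomplete graph. Connectivity of $G_n$ is in the proof of Proposition~\ref{prop:vertex-connectivity}: any two vertices of $X$ share at least $n-4\geq1$ neighbours in $Y$, and every vertex has positive degree. Equivalently, $\kappa(G_n)=n-2\geq3>0$. Noncompleteness is immediate, since $G_n$ is bipartite on $2n\geq10$ vertices; for example, $u_0$ and $u_1$ are not adjacent. Hence
\[
v_{P_\varnothing(G_n)}(J_{G_n})=\gamma_c(G_n),
\]
and Proposition~\ref{prop:connected-domination} gives $\gamma_c(G_n)=4$.

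We also need $P_\varnothing(G_n)$ to be a minimal prime, so that the local $v$-number at it is one of the terms in the definition of $v(J_{G_n})$. This holds because $\varnothing\in\cC(G_n)$ by definition; it is type~(i) of Theorem~\ref{thm:cut-point-classification}. The inequality
\[
v(J_{G_n})=\min_{P\in\Min(J_{G_n})}v_P(J_{G_n})\leq v_{P_\varnothing(G_n)}(J_{G_n})=4
\]
then follows from the definition of the $v$-number as a minimum over minimal primes.

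There is no real obstacle here; the only point needing care is to confirm that the quoted theorem uses the same normalization of $v_P$ (minimal degree of a homogeneous $f$ with $J_G:f=P$) as the definition above. If it were phrased differently, for instance in terms of $f\notin P$ with $(J_G:f)=P$, I would note that the two versions agree for radical ideals, since $J_{G_n}$ is radical.
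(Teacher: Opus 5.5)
Your proposal is correct and follows the paper's own route: the equality comes from combining the cited result $v_{P_\varnothing(G)}(J_G)=\gamma_c(G)$ for connected noncomplete graphs with Proposition~\ref{prop:connected-domination}, and the bound $v(J_{G_n})\leq 4$ is immediate from the definition of $v(J_{G_n})$ as a minimum over the minimal primes. Your explicit checks that $G_n$ is connected and noncomplete, and that $P_\varnothing(G_n)$ is a minimal prime, are steps the paper leaves implicit.
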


\subsection{Separators, support, and component merging}
Write $\br{a}{b}=x_a y_b-x_b y_a$.
By the radical minimal-prime decomposition, a polynomial $f$ is a
separator for $P_T(G)$ if and only if \( f\notin P_T(G) \) and 
\[
f\in P_Q(G)
\quad\text{for every }
Q\in\cC(G)\setminus\{T\}.
\]
The radical minimal-prime decomposition shows that this is equivalent
to $J_G:f=P_T$. The least degree of a separator is the local $v$-number.

\begin{lemma}\label{lem:multiseparator}
	Let $T\in\cC(G)$ and put $U=V(G)\setminus T$.
	A separator for $P_T(G)$ of minimum total degree
	can be chosen multihomogeneous with respect to
	the column grading $\deg x_w=\deg y_w=e_w$.
	If $\alpha$ is its multidegree, then
	$\operatorname{supp}(\alpha)\subseteq U$.
	Moreover, if $A\subseteq V(G)$ satisfies
	$P_T(G)\not\subseteq P_A(G)$, then every separator
	for $P_T(G)$ belongs to $P_A(G)$.
\end{lemma}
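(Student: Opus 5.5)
The plan is to treat the three assertions separately, using only that every $P_A(G)$ is multihomogeneous for the column grading and that $J_G=\bigcap_{Q\in\cC(G)}P_Q(G)$.

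\emph{Multihomogeneity.} Both $J_G$ and $P_T(G)$ are generated by multihomogeneous elements: the variables $x_i,y_i$ and the binomials $\br{i}{j}$ all have this property. Let $f$ be a separator of minimum total degree $d$, and write $f=\sum_\alpha f_\alpha$ as a sum of multihomogeneous components, all of total degree $d$. Since $f\in P_Q(G)$ for every $Q\in\cC(G)\setminus\{T\}$ and these ideals are multihomogeneous, every component $f_\alpha$ lies in all such $P_Q(G)$. Since $f\notin P_T(G)$, some component $f_\alpha$ is not in $P_T(G)$. By the characterization recalled just before the lemma, this $f_\alpha$ is a separator of the same degree $d$.

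\emph{Support.} Take such an $f_\alpha$ and write it as $g+h$. Here $h$ collects the monomials divisible by some $x_w$ or $y_w$ with $w\in T$, so $h\in P_T(G)$. If $\operatorname{supp}(\alpha)$ met $T$ at a vertex $w$, then every monomial of multidegree $\alpha$ would have positive degree in column $w$. That puts every monomial in $(x_w,y_w)\subseteq P_T(G)$, hence $f_\alpha\in P_T(G)$, a contradiction. Therefore $\operatorname{supp}(\alpha)\subseteq U$. This step is immediate once multihomogeneity is in hand.

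\emph{Last assertion.} Let $f$ be a separator and let $A$ satisfy $P_T(G)\not\subseteq P_A(G)$. First, $P_A(G)\supseteq J_G$, so $P_A(G)$ contains some minimal prime $P_Q(G)$ with $Q\in\cC(G)$. Since $P_T(G)\not\subseteq P_A(G)$, we have $Q\neq T$. Then $f\in P_Q(G)\subseteq P_A(G)$.

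Alternatively, one can argue through colon ideals: $fP_T\subseteq J_G\subseteq P_A$, and $P_A$ is prime with $P_T\not\subseteq P_A$, so $f\in P_A$. This version uses $J_G:f=P_T$ directly.

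The only point needing care is that each $P_A(G)$ is multihomogeneous, which is clear from its generators. I therefore expect no real obstacle.
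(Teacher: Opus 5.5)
Your proposal is correct and follows essentially the same route as the paper: you split a minimum-degree separator into column-homogeneous components, which works because every $P_Q(G)$ is column-homogeneous, and you rule out a component whose support meets $T$ via $(x_w,y_w)\subseteq P_T(G)$. You then get the last assertion from the fact that $P_A(G)$ contains a minimal prime $P_Q(G)$ with $Q\neq T$, exactly as the paper does; your alternative colon-ideal argument ($fP_T(G)\subseteq J_G\subseteq P_A(G)$ with $P_A(G)$ prime and $P_T(G)\not\subseteq P_A(G)$) is also valid and slightly more direct.
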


\begin{proof}
	Write $J=J_G$ and $P_B=P_B(G)$.
	Since $J$ is radical, its minimal-prime decomposition
	shows that a polynomial $f$ is a separator for $P_T$
	if and only if \( f\notin P_T \) and \( f\in P_B \) for every \( B\in\cC(G)\setminus\{T\}. \)
	Indeed, this follows by taking colons in the minimal-prime decomposition of $J$.
	Choose a separator $f$ of minimum total degree and write $f=\sum_\alpha f_\alpha$ as a sum of its column-homogeneous components.
	Every minimal prime of $J$ is column-homogeneous.
	Hence each $f_\alpha$ belongs to every minimal
	prime other than $P_T$.
	Since $f\notin P_T$, at least one component
	$f_\alpha$ does not belong to $P_T$.
	This component is therefore a separator.
	Its total degree is at most that of $f$, so the
	minimality of $\deg f$ implies that $f_\alpha$
	also has minimum degree.
	If $\alpha_w>0$ for some $w\in T$, then every
	monomial of $f_\alpha$ is divisible by $x_w$ or
	$y_w$. Consequently,
	$f_\alpha\in(x_w,y_w)\subseteq P_T$, a contradiction.
	Thus $\operatorname{supp}(\alpha)\subseteq U$.
	
	Finally, let $A\subseteq V(G)$ satisfy
	$P_T\not\subseteq P_A$.
	Since $P_A$ is a prime containing $J$, it contains
	a minimal prime $P_B$ of $J$.
	The hypothesis implies that $P_B\neq P_T$.
	Every separator for $P_T$ belongs to $P_B$,
	and therefore belongs to $P_A$.
\end{proof}

The following formula is due to
\cite[Theorem~4.15]{Liwski}. We include a direct
proof using supports of multihomogeneous separators.

\begin{lemma}\label{lem:twocomponents}
	Let $G$ be connected, and let $T\in\cC(G)$ such
	that $G\setminus T$ has exactly two connected components,
	with vertex sets $C_1$ and $C_2$.
	For $i=1,2$, let $d_i$ be the minimum cardinality
	of a connected dominating set of $G[C_i\cup T]$
	contained in $C_i$. Then \( {v}_{P_T(G)}(J_G)=d_1+d_2. \)
\end{lemma}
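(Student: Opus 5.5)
I would prove the two inequalities separately, using Lemma~\ref{lem:multiseparator} to reduce to column-homogeneous separators supported on $U=C_1\sqcup C_2$.

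\emph{Upper bound.} For $i=1,2$ choose a connected dominating set $D_i\subseteq C_i$ of $G[C_i\cup T]$ with $|D_i|=d_i$. Then $G[D_i]$ is connected and, because $G[C_i]$ is connected, we may take a spanning tree and build a polynomial $g_i$ of degree $d_i$ in the variables $x_w,y_w$, $w\in D_i$, lying in the ideal of $P_\varnothing$-type for $\widetilde{G[D_i]}$ but not in $J_{\widetilde{C_i}}$. My first choice is $g_i=x_{a_i}^{d_i}$-free products such as $\prod_{w\in D_i}y_w\cdot$ adjusted, but concretely I would use the separator for $P_\varnothing(G[C_i\cup T])$ supplied by \cite{ConnectedDomination} restricted to $D_i$, and set $f=g_1g_2$, say with $g_1$ built from $x$-variables and $g_2$ from $y$-variables so that $f$ is a ``cross-component'' product of the shape $x_{D_1}y_{D_2}$. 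Then $f\notin P_T$, since modulo $P_T$ the variables on different components are independent and a monomial $x_{D_1}y_{D_2}$ is nonzero in $S/P_T$. For $Q\in\cC(G)\setminus\{T\}$ I would show $f\in P_Q$: either $Q$ contains a vertex of $D_1\cup D_2$ (then $f\in(x_w,y_w)$), or $D_1\cup D_2\subseteq\overline Q$. In the latter case, since $Q\ne T$ and both are cut sets, some vertex of $T$ survives in $\overline Q$, or $Q$ meets $T$ only partially; domination of $T$ by both $D_1$ and $D_2$ then puts $D_1$ and $D_2$ in one component of $G\setminus Q$, so $x_ay_b-x_by_a\in P_Q$ for $a\in D_1,b\in D_2$, and I would arrange $f$ so that it is a multiple of such a binomial modulo monomials in $P_Q$ (replacing $x_{D_1}y_{D_2}$ by $\prod$ of a suitable binomial times the rest). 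The remaining subcase $T\subseteq Q$ with $Q\supsetneq T$ is handled by Lemma~\ref{lem:multiseparator}'s last assertion, since then $P_T\not\subseteq P_Q$ fails only if $Q\subseteq T$.

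\emph{Lower bound.} Take a multihomogeneous separator $f$ of minimal degree with support $\operatorname{supp}(\alpha)\subseteq U$, and let $S_i=\operatorname{supp}(\alpha)\cap C_i$. It suffices to show that each $S_i$ is a connected dominating set of $G[C_i\cup T]$, giving $\deg f\ge|S_1|+|S_2|\ge d_1+d_2$. Domination: if some $t\in T$ has no neighbour in $S_1$, consider $A=T\setminus\{t\}$ (or a suitable set obtained by re-adding $t$); in $G\setminus A$ the vertex $t$ merges $C_2$-side pieces but not $S_1$, and I would check $P_T\not\subseteq P_A$ and that $f\notin P_A$ because the multidegree-$\alpha$ part of $S/P_A$ agrees with that of $S/P_T$ when $t$ does not connect the supports, contradicting Lemma~\ref{lem:multiseparator}. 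Similarly if some $c\in C_1$ has no neighbour in $S_1$, delete $c$ together with $T$, i.e.\ $A=T\cup\{c\}$ (or a set with the cut point property inside it), to contradict. Connectivity: if $G[S_1]$ is disconnected, split $C_1$ by a set $A\supseteq T$ separating two pieces of $S_1$ and argue as before, using that $f\notin P_T$ forces nonvanishing modulo $P_A$ in degree $\alpha$ by the explicit basis of Proposition~\ref{prop:Hilbert}'s proof (dimension $\prod_C(1+\sum_{w\in C}\alpha_w)$).

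The main obstacle is this lower bound: making precise that, in multidegree $\alpha$, membership in $P_A$ versus $P_T$ depends only on how the components partition $\operatorname{supp}(\alpha)$, so that a non-dominating or disconnected support yields a prime $P_A\ne P_T$ with $f\notin P_A$. I would first establish this via the monomial basis indexed by component $x$-totals, then do the case analysis.
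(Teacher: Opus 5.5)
Your overall plan matches the paper's. For the upper bound you use a bracket times a monomial of degree $d_1+d_2$. For the lower bound you show that the support of a minimum column-homogeneous separator meets each $C_i$ in a connected dominating set of $G[C_i\cup T]$. The candidate must be fixed independently of $Q$, e.g.\ $f=\br{a}{b}\prod_{w\in D\setminus\{a,b\}}x_w$ with $a\in D_1$, $b\in D_2$. A monomial such as $x_{D_1}y_{D_2}$ can never work, since $P_\varnothing(G)\neq P_T(G)$ contains no monomials. However, there are genuine gaps in both directions.

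\textbf{Upper bound.} Consider $Q\in\cC(G)$ with $Q\cap D=\varnothing$ and $T\subsetneq Q$. Here you appeal to the last assertion of Lemma~\ref{lem:multiseparator}, but that assertion concerns polynomials already known to be separators. Using it to show that your $f$ lies in $P_Q$ is circular. In this case $G\setminus Q$ is an induced subgraph of $G\setminus T$, so $D_1$ and $D_2$ really do lie in different components. Hence $\br{a}{b}\notin P_Q$, and since $P_Q$ is prime, $f\notin P_Q$.

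What must be shown is that no such $Q$ has the cut point property. This is exactly where domination of $C_i$ by $D_i$ is needed, and your argument never uses it. Because $D_i$ is connected and dominates $C_i$, the components of $G\setminus Q$ are precisely $C_1\setminus Q$ and $C_2\setminus Q$. Any $q\in Q\setminus T$ lies in some $C_i$ and has surviving neighbours in only one of them. It therefore fails the criterion of Lemma~\ref{lem:component-criterion}, which forces $Q=T$.

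Domination of $T$ alone is not enough. Take the path with vertices $1,\ldots,7$ in order and $T=\{4\}$. The set $\{3,5\}$ dominates $T$, yet $Q=\{2,4,6\}\in\cC(G)$ and $\br{3}{5}\notin P_Q$.

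\textbf{Lower bound.} Write $W=\operatorname{supp}(\alpha)$ and $W_i=W\cap C_i$ (your $S_i$). Your connectivity step is fine: any $A\supseteq T$ disjoint from $W$, e.g.\ $A=V(G)\setminus W$, works because $G\setminus A$ is an induced subgraph of $G\setminus T$. The domination steps fail as written.

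\emph{Domination of $T$.} With $A=T\setminus\{t\}$, the vertex $t$ has neighbours in both $C_1$ and $C_2$ by the cut point property, and $G[C_1]$, $G[C_2]$ are connected. So $G\setminus A$ is connected, $P_A\cap\K[x_w,y_w:w\in W]$ contains every bracket on $W$, and $f\in P_A$ is no contradiction. Your claim that $t$ does not merge $S_1$ overlooks the non-support vertices of $C_1$, which remain.

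\emph{Domination of $C_1$.} With $A=T\cup\{c\}$, one has $P_T\subseteq P_A$ whenever $C_1\setminus\{c\}$ is connected, so Lemma~\ref{lem:multiseparator} does not apply at all.

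The missing idea is to delete everything outside the support except the witness vertex: $A=V(G)\setminus(W\cup\{t\})$, respectively $A=V(G)\setminus(W\cup\{c\})$. Such $A$ need not have the cut point property, since the last assertion of Lemma~\ref{lem:multiseparator} is stated for arbitrary $A$. The components of $G[W\cup\{t\}]$, respectively $G[W\cup\{c\}]$, never join $W_1$ to $W_2$, so $P_A\cap\K[x_w,y_w:w\in W]\subseteq P_T$. Meanwhile $x_t$, respectively $\br{a}{c}$ with $a\in W_1$, lies in $P_T\setminus P_A$. Then $f\in P_A$ forces $f\in P_T$, a contradiction, and no appeal to the Hilbert-series basis is needed.
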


\begin{proof}
	Write $P_A=P_A(G)$.
	Since $G$ is connected, $T\neq\varnothing$.
	The cut point property implies that every vertex
	of $T$ has a neighbour in each $C_i$.
	Consequently, $C_i$ itself is a connected
	dominating set of $G[C_i\cup T]$, so $d_i$
	is well defined.
	
	By Lemma~\ref{lem:multiseparator}, choose a
	minimum-degree column-homogeneous separator $f$
	for $P_T$, with support
	$W\subseteq V(G)\setminus T$.
	Put $W_i=W\cap C_i$ and
	$S_W=\mathbb K[x_w,y_w:w\in W]$.
	We show that $W_i$ is connected and dominates
	$G[C_i\cup T]$ for each $i$.
	First, every $t\in T$ has a neighbour in each $W_i$.
	Otherwise, put $A=V(G)\setminus(W\cup\{t\})$.
	Since $x_t\in P_T\setminus P_A$,
	Lemma~\ref{lem:multiseparator} gives $f\in P_A$.
	However, $t$ has neighbours in at most one of
	$W_1,W_2$. Thus no connected component of
	$G[W\cup\{t\}]$ meets both $W_1$ and $W_2$.
	It follows that $P_A\cap S_W\subseteq P_T$,
	contradicting $f\notin P_T$.
	Since $T\neq\varnothing$, both $W_1$ and $W_2$
	are nonempty.
	Next, suppose that $G[W_i]$ is disconnected.
	Choose $a,b\in W_i$ in different connected
	components and put $A=V(G)\setminus W$.
	Then $\br{a}{b}\in P_T\setminus P_A$.
	Moreover, every component of $G[W]$ is contained
	in one of $C_1,C_2$, so
	$P_A\cap S_W\subseteq P_T$.
	Lemma~\ref{lem:multiseparator} again gives the
	contradiction $f\in P_T$.
	Hence each $G[W_i]$ is connected.
	Finally, suppose that some $w\in C_i\setminus W_i$
	has no neighbour in $W_i$.
	Set $A=V(G)\setminus(W\cup\{w\})$.
	The vertex $w$ is isolated in $G[W\cup\{w\}]$.
	Choosing $a\in W_i$, we obtain
	$\br{a}{w}\in P_T\setminus P_A$.
	As before, $P_A\cap S_W\subseteq P_T$,
	which contradicts Lemma~\ref{lem:multiseparator}.
	Thus $W_i$ dominates $C_i$, and the first part
	of the argument shows that it also dominates $T$.
	Therefore \( \deg f\geq |W|=|W_1|+|W_2|\geq d_1+d_2. \)
	
	Conversely, choose sets $D_i\subseteq C_i$
	attaining $d_i$, and put $D=D_1\cup D_2$.
	Choose $a\in D_1$ and $b\in D_2$, and define \( f=\br{a}{b}
	\prod_{w\in D\setminus\{a,b\}}x_w. \)
	Since $a,b$ belong to different components of
	$G\setminus T$, the bracket $\br{a}{b}$ does not belong
	to $P_T$. None of the other factors belongs
	to $P_T$ either. As $P_T$ is prime, $f\notin P_T$.
	
	Let $Q\in\cC(G)\setminus\{T\}$.
	If $Q\cap D\neq\varnothing$, then $f\in P_Q$.
	Suppose that $Q\cap D=\varnothing$.
	Each $D_i$ is connected in $G\setminus Q$.
	If $D_1$ and $D_2$ lie in the same component,
	then $\br{a}{b}\in P_Q$, and hence $f\in P_Q$. Otherwise, every vertex of $T$ belongs to $Q$:
	a surviving vertex of $T$ would have neighbours
	in both $D_1$ and $D_2$, joining their components.
	Since $D_i$ dominates $C_i$, every surviving
	vertex of $C_i$ belongs to the component of $D_i$.
	If $q\in Q\setminus T$, then $q$ belongs to some
	$C_i$ and has no neighbours in the other component.
	Restoring $q$ therefore cannot make it a cut vertex,
	contrary to $Q\in\cC(G)$.
	Thus $Q=T$, a contradiction.
	
	Therefore, $f$ belongs to every minimal prime
	other than $P_T$ and lies outside $P_T$.
	It is therefore a separator of degree
	$|D|=d_1+d_2$, proving the reverse inequality.
\end{proof}
\begin{lemma}\label{lem:mixed-degree}
	Let $G$ be bipartite with bipartition $X\sqcup Y$, and let
	$T\in\cC(G)$ meet both classes. If $f$ is a column-homogeneous
	separator for $P_T(G)$ with multidegree $\alpha$, then
	\( 	|\operatorname{supp}(\alpha)\cap X|\geq2
	~\text{and}~
	|\operatorname{supp}(\alpha)\cap Y|\geq2. \)
	In particular,
	\[
	\sum_{w\in X}\alpha_w\geq2
	~\text{and}~
	\sum_{w\in Y}\alpha_w\geq2,
	\]
	and therefore $v{P_T(G)}(J_G)\geq4$.
\end{lemma}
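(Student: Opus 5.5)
We argue by contradiction, in the style of the proof of Lemma~\ref{lem:twocomponents}, using the last assertion of Lemma~\ref{lem:multiseparator}. Let $f$ be a column-homogeneous separator with multidegree $\alpha$, and put $W=\operatorname{supp}(\alpha)$. By Lemma~\ref{lem:multiseparator} we may take $W\subseteq U=V(G)\setminus T$; we only need $f\in S_W=\K[x_w,y_w:w\in W]$, which holds by definition of the support. By the symmetry between $X$ and $Y$, it suffices to show $|W\cap X|\geq2$.

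We will produce a set $A\subseteq V(G)$ with $P_T(G)\not\subseteq P_A(G)$ and $P_A(G)\cap S_W\subseteq P_T(G)$. Lemma~\ref{lem:multiseparator} then gives $f\in P_A(G)$. Since $f\in S_W$, this forces $f\in P_T(G)$, contradicting that $f$ is a separator.

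Suppose $|W\cap X|\leq1$. Because $T$ meets $Y$, choose $t\in T\cap Y$ and put $A=V(G)\setminus(W\cup\{t\})$. Then $t\notin A$, so $x_t\in P_T(G)\setminus P_A(G)$, and hence $P_T(G)\not\subseteq P_A(G)$.

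Next we compare the two primes on $S_W$. The ideal $P_A(G)\cap S_W$ is generated by brackets $\br{a}{b}$ with $a,b\in W$ lying in the same component of $G[W\cup\{t\}]$. The vertex $t$ lies in $Y$, so it is adjacent only to vertices of $W\cap X$. Since $|W\cap X|\leq1$, the vertex $t$ has at most one neighbour in $W$. Restoring $t$ therefore cannot merge two components of $G[W]$: it is either isolated or a pendant vertex attached to one component. So the components of $G[W\cup\{t\}]$, restricted to $W$, are exactly the components of $G[W]$.

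Each component of $G[W]$ lies inside a single component of $G\setminus T$, so its complete-graph binomials lie in $P_T(G)$. This gives $P_A(G)\cap S_W\subseteq P_T(G)$, which is the required contradiction. Hence $|W\cap X|\geq2$, and interchanging the roles of $X$ and $Y$ (using $t\in T\cap X$) gives $|W\cap Y|\geq2$.

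The inequalities $\sum_{w\in X}\alpha_w\geq2$ and $\sum_{w\in Y}\alpha_w\geq2$ follow at once, since $\alpha_w\geq1$ on the support. Adding them gives $\deg f\geq4$ for every column-homogeneous separator. By Lemma~\ref{lem:multiseparator}, a minimum-degree separator can be chosen column-homogeneous, so $v_{P_T(G)}(J_G)\geq4$.

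The main point requiring care is the description of $P_A(G)\cap S_W$. Since $P_A(G)$ is the sum of the variables of $A$ and of complete-graph ideals on disjoint vertex sets, its contraction to $S_W$ is generated by the brackets within the components of $G[W\cup\{t\}]$ intersected with $W$. This is the same fact used implicitly in Lemma~\ref{lem:twocomponents}.
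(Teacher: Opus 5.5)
Your argument is correct, but it takes a different route from the paper's.

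\textbf{The paper's route.} The paper works algebraically at a vertex $t\in T\cap X$. It uses Ohtani's decomposition $J_G=J_{G_t}\cap\bigl((x_t,y_t)+J_{G\setminus t}\bigr)$. Since $t$ is simplicial in $G_t$, it lies in no cut point set of $G_t$, so $x_t$ is a nonzerodivisor modulo $J_{G_t}$ and $J_G:x_t=J_{G_t}$. Then $x_tf\in J_G$ forces $f\in J_{G_t}$. The extra generators of $J_{G_t}$ are brackets $\br{a}{b}$ with $a,b\in N_G(t)$, of multidegree $e_a+e_b$. Comparing multidegree-$\alpha$ components therefore gives $f\in J_G$, a contradiction, unless $\alpha$ is supported on two neighbours of $t$.

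\textbf{Your route.} You take the prime $P_A$ with survivor set $W\cup\{t\}$ and apply the last clause of Lemma~\ref{lem:multiseparator}, exactly as in the first step of Lemma~\ref{lem:twocomponents}.

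\textbf{Comparison.} Your approach avoids Ohtani's decomposition and the nonzerodivisor argument, and stays within the support/component-merging toolkit of Section~\ref{sec:vnumber}. It also proves something slightly stronger for arbitrary $G$: restoring any $t\in T$ must merge two components of $G[W]$. Bipartiteness enters only at the end, to place those neighbours in the opposite class. The cost is that you rely on the inclusion $P_A(G)\cap S_W\subseteq P_T(G)$ whenever the partition of $W$ induced by $G\setminus A$ refines the one induced by $G\setminus T$. The paper's argument needs neither this nor $W\subseteq U$.

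\textbf{One small correction.} The inclusion $W\subseteq U$ is not optional in your proof. You use it when you say each component of $G[W]$ lies inside a component of $G\setminus T$. Since $f$ is an arbitrary column-homogeneous separator, not a chosen minimum-degree one, justify it directly: if $\alpha_w>0$ for some $w\in T$, then every monomial of $f$ is divisible by $x_w$ or $y_w$. Hence $f\in(x_w,y_w)\subseteq P_T(G)$, which is the one-line argument from the proof of Lemma~\ref{lem:multiseparator}.
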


\begin{proof}
	Write $J=J_G$ and choose $t\in T\cap X$.
	The cut point property implies that $t$ is
	an internal vertex of $G$.
Let $G_t$ be obtained from $G$ by completing the neighbourhood of
$t$. By Ohtani's decomposition~\cite{Ohtani},
\[
J
=
J_{G_t}
\cap
\bigl((x_t,y_t)+J_{G\setminus t}\bigr).
\]
The vertex $t$ is simplicial in $G_t$, and hence it belongs to no
cut point set of $G_t$. Therefore $x_t$ is contained in no minimal
prime of $J_{G_t}$ and is a nonzerodivisor modulo $J_{G_t}$.
Consequently,
\[
J:x_t=J_{G_t}.
\] Since $(J:f)=P_T(G)$ and $x_t\in P_T(G)$,
	we have $x_tf\in J$, and hence $f\in J_{G_t}$.
	The ideal $J_{G_t}$ is obtained from $J$ by
	adjoining minors $\br{a}{b}$ between distinct
	vertices $a,b\in N_G(t)\subseteq Y$.
	Every such minor has column multidegree $e_a+e_b$. If $\operatorname{supp}(\alpha)$ contained at most
	one vertex of $Y$, none of these additional
	generators could contribute to the
	multidegree-$\alpha$ component of $J_{G_t}$.
	It would follow that $f\in J$, contradicting
	the fact that $f$ is a separator.
	Thus $|\operatorname{supp}(\alpha)\cap Y|\geq2$. Choosing a vertex of $T\cap Y$ and interchanging
	the roles of $X$ and $Y$ gives
	$|\operatorname{supp}(\alpha)\cap X|\geq2$.
	The degree inequalities follow immediately.
	Finally, Lemma~\ref{lem:multiseparator} allows a
	minimum-degree separator to be chosen
	column-homogeneous, so
	$v_{P_T(G)}(J_G)\geq4$.
\end{proof}

Let $T\in\mathcal \cC(G)$, and let $f$ be a column-homogeneous
separator for $P_T(G)$ of multidegree $\alpha$, with support $W$.
Suppose that the connected components of $G\setminus T$ induce the partition
$W=A\sqcup B\sqcup C$ into three nonempty blocks. Assume that there exist
$Q_{AB},Q_{AC},Q_{BC}\in\mathcal C(G)$, each disjoint from $W$,
whose corresponding surviving graphs induce the partitions
\[
\{A\cup B,C\},~
\{A\cup C,B\},~
\{B\cup C,A\},
\]
respectively.
Thus each of these cut point sets merges exactly one pair of the
original blocks.
For $D\in\{A,B,C\}$, put
$d_D=\sum_{w\in D}\alpha_w$.

\begin{lemma}\label{lem:mergers}
	With the above notation and assumptions,
	$d_A,d_B,d_C\geq2$. In particular, $\deg f\geq6$.
\end{lemma}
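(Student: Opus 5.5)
The plan is to prove $d_A\geq2$; the bounds $d_B\geq2$ and $d_C\geq2$ then follow by permuting the roles of $A,B,C$, and $\deg f=d_A+d_B+d_C\geq6$. Only the two merging sets $Q_{AB}$ and $Q_{AC}$ are needed for the block $A$.

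Suppose instead that $d_A=1$. Then $A\cap W=\{a\}$ with $\alpha_a=1$. Since $f$ is column-homogeneous, it is linear in the column of $a$:
\[
f=x_a g+y_a h,
\]
where $g,h$ are column-homogeneous of multidegree $\alpha-e_a$ and are supported on $B\cup C$. All the relevant ideals are column-homogeneous, and $f$ only involves the variables of $W$. So membership of $f$ can be tested after intersecting with $S_W=\K[x_w,y_w:w\in W]$.

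Because $W\cap T=\varnothing$ and the blocks $A,B,C$ come from distinct components of $G\setminus T$, we have
\[
P_T\cap S_W=J_{K_A}+J_{K_B}+J_{K_C},
\]
where each complete-graph ideal is taken on the block's vertices in $W$, and $J_{K_A}=0$ since $A\cap W=\{a\}$. Likewise, since $Q_{AB}$ is disjoint from $W$ and its surviving graph induces the partition $\{A\cup B,C\}$, we get $P_{Q_{AB}}\cap S_W=J_{K_{A\cup B}}+J_{K_C}$, and symmetrically for $Q_{AC}$. By Lemma~\ref{lem:multiseparator}, $f$ lies in both of these ideals (as $P_T\not\subseteq P_{Q_{AB}},P_{Q_{AC}}$, the sets $Q_{AB}$ and $Q_{AC}$ being distinct from $T$), while $f\notin P_T$.

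The key step is a geometric, or domain-theoretic, argument on the prime $\mathfrak p=J_{K_B}+J_{K_C}$ of $S'=\K[x_w,y_w:w\in B\cup C]$. Let $R=S'/\mathfrak p$, a domain, and pass to its fraction field $L$. In $R$ all columns of $B$ are proportional, so over $L$ there is a nonzero vector $p_B\in L^2$ with every $B$-column a multiple of $p_B$; similarly there is $p_C$. Since $B$ and $C$ are separate factors of a tensor product of domains, $p_B$ and $p_C$ are linearly independent over $L$. Take, for instance, $p_B$ to be the column of a fixed $b\in B$ and $p_C$ the column of a fixed $c\in C$; then $\det[p_B\,p_C]=x_by_c-x_cy_b$, which is nonzero in $R$.

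Now write $\Phi=(\bar g,\bar h)\in R^2$, so that $\bar f=\langle(x_a,y_a),\Phi\rangle$ in $R[x_a,y_a]$. The condition $f\in J_{K_{A\cup B}}+J_{K_C}$ means that $\bar f$ lies in the ideal generated by the minors $[a,b]$ with $b\in B$. Passing to the quotient $R[x_a,y_a]/([a,b]:b\in B)$, a domain in which $(x_a,y_a)$ is proportional to $p_B$, the element $\bar f$ vanishes there. Specializing $(x_a,y_a)\mapsto p_B$ (one should check the specialization is valid over the fraction field) gives $\langle p_B,\Phi\rangle=0$. Symmetrically $f\in P_{Q_{AC}}$ gives $\langle p_C,\Phi\rangle=0$. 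Since $\det[p_B\,p_C]\neq0$ in the domain $R$, Cramer's rule forces $\Phi=0$. Hence $g,h\in\mathfrak p$, so $f\in J_{K_B}+J_{K_C}\subseteq P_T$, contradicting the fact that $f$ is a separator.

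The main obstacle is making the specialization step rigorous, that is, deducing $\langle p_B,\Phi\rangle=0$ from $\bar f\in([a,b]:b\in B)$. My first attempt would be a direct argument: the ideal $([a,b]:b\in B)$ in $R[x_a,y_a]$ is the kernel of the map $R[x_a,y_a]\to R[s]$ given by $(x_a,y_a)\mapsto s\,p_B$, after localizing at a nonzero coordinate of $p_B$. Applying this map to the linear form $\bar f$ then yields $s\langle p_B,\Phi\rangle=0$, as required. Alternatively, over an infinite extension of $\K$, one can argue pointwise on the irreducible variety $V(P_T\cap S_W)$. Since $R$ is a domain, it is enough to check vanishing on a dense set of points.
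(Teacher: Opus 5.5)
Your argument is correct, but it takes a different route from the paper.

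\textbf{The paper's route.} The paper uses the rank-one parametrization $\varphi$, which sends the column of $w\in D$ to $t_w(z_{D1},z_{D2})$ and whose kernel on $S_W$ is $P_T(G)\cap S_W$. It writes $\varphi(f)=t^\alpha F(z)$ with $F\neq0$ homogeneous of degree $d_D$ in each pair $(z_{D1},z_{D2})$. Each membership $f\in P_{Q_{DE}}(G)$ then forces $\Delta_{DE}\mid F$. Since the three $\Delta$'s are pairwise nonassociate irreducibles in a UFD, their product divides $F$, and this gives all three bounds at once.

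\textbf{Your route.} You argue one block at a time by contradiction. If $d_A=1$, then $f=x_ag+y_ah$. The two mergers through $A$ give the linear system $x_b\bar g+y_b\bar h=0$, $x_c\bar g+y_c\bar h=0$ over the domain $R$. Its determinant $\br{b}{c}$ is nonzero in $R$, so $g,h\in\mathfrak p$ and hence $f\in P_T(G)$, a contradiction.

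\textbf{Comparison.} The paper's version is uniform in the degrees and yields the stronger structural statement $\Delta_{AB}\Delta_{AC}\Delta_{BC}\mid F$. Yours needs only linear algebra over a domain, with no explicit parametrization and no irreducibility or nonassociateness of the $\Delta$'s. It also makes explicit that the bound for a block uses only the two mergers involving that block. Both proofs rest on the same standard description of $P\cap S_W$, for a minimal prime $P$ whose deleted set avoids $W$, as a sum of complete-graph ideals on the induced partition.

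\textbf{Your flagged obstacle.} The step you call the main obstacle is in fact immediate, because you only need one direction. Consider the $R$-algebra map $R[x_a,y_a]\to R$ given by $x_a\mapsto x_b$, $y_a\mapsto y_b$. You only need the ideal $(\br{a}{b'}:b'\in B)$ to lie in its kernel. This holds because $\br{a}{b'}\mapsto\br{b}{b'}=0$ in $R$. Applying the map to $\bar f$ gives $x_b\bar g+y_b\bar h=0$ directly. No localization, no equality of kernels, and no density argument over an infinite field are needed.
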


\begin{proof}
	Write $S_W=\mathbb K[x_w,y_w:w\in W]$.
	For each block $D\in\{A,B,C\}$, introduce
	variables $z_{D1},z_{D2}$, and consider the
	homomorphism
	\[
	\varphi:S_W\longrightarrow
	\mathbb K[t_w:w\in W]
	[z_{A1},z_{A2},z_{B1},z_{B2},z_{C1},z_{C2}]
	\]
	defined, for $w\in D$,  by \( 	\varphi(x_w)=t_wz_{D1},
	~\text{and}~
	\varphi(y_w)=t_wz_{D2} \). The standard rank-one parametrization gives
	$\ker\varphi=P_T(G)\cap S_W$.
	Since $f\notin P_T(G)$ and $f$ has column
	multidegree $\alpha$, we may write
	\[
	\varphi(f)=t^\alpha F(z),
	~
	t^\alpha=\prod_{w\in W}t_w^{\alpha_w},
	\]
	where $F\neq0$.
	Moreover, $F$ is homogeneous of degree
	$d_D=\sum_{w\in D}\alpha_w$ in the pair
	$(z_{D1},z_{D2})$ for each block $D$.
	For distinct blocks $D,E$, put \( \Delta_{DE}=z_{D1}z_{E2}-z_{D2}z_{E1}. \)
	The partition associated with $Q_{AB}$ differs
	from that associated with $T$, so $Q_{AB}\neq T$.
	Since $f$ is a separator for $P_T(G)$,
	we have $f\in P_{Q_{AB}}(G)\cap S_W$.
	This intersection is generated by the minors
	within $A\cup B$ and those within $C$.
	Under $\varphi$, minors within an individual
	block vanish, while a minor between
	$a\in A$ and $b\in B$ maps to
	$t_at_b\Delta_{AB}$.
	Consequently, $\Delta_{AB}$ divides
	$t^\alpha F(z)$, and hence divides $F(z)$.
	
	Applying the same argument to $Q_{AC}$ and
	$Q_{BC}$ shows that $\Delta_{AC}$ and
	$\Delta_{BC}$ also divide $F$.
	These three determinants are pairwise
	nonassociate irreducible polynomials.
	Since the polynomial ring is a unique
	factorization domain, we see that \( \Delta_{AB}\Delta_{AC}\Delta_{BC}\mid F. \)
	This product has degree two in each of the
	three pairs of variables. Therefore
	$d_A,d_B,d_C\geq2$.
\end{proof}

\subsection{Local Vasconcelos numbers}

Throughout this subsection, write $S=S_{G_n}$,
$J=J_{G_n}$, and $P_T=P_T(G_n)$.
The types of cut point sets refer to
Theorem~\ref{thm:cut-point-classification}.
For $W\subseteq V(G_n)$, put
$S_W=\K[x_w,y_w:w\in W]$.

By Lemma~\ref{lem:multiseparator}, a separator of minimum
degree may be chosen column-homogeneous.
If $W$ is its support, then
$W\subseteq V(G_n)\setminus T$ and $\deg f\geq|W|$.
We shall repeatedly use the following observation:
if $Q\in\cC(G_n)\setminus\{T\}$ avoids $W$ and the
components of $G_n\setminus Q$ and $G_n\setminus T$
induce the same partition of $W$, then
\[
P_Q\cap S_W=P_T\cap S_W.
\]
This contradicts $f\in P_Q\setminus P_T$.
We refer to this as the support observation.

In checking the explicit separators below, a prime
deleting a supported column contains the polynomial
by column homogeneity.
Moreover, every displayed product containing a bracket
belongs to $P_{\varnothing}$.

\begin{lemma}\label{lem:local-v-two-components}
	If $T$ is of type \textup{(iv)}, then
	$ v_{P_T}(J)=5$.
	If $T$ is of type \textup{(vi)} or \textup{(vii)}, then
	$ v_{P_T}(J)=6$.
\end{lemma}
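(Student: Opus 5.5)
All three types have exactly two components in $G_n\setminus T$, so Lemma~\ref{lem:twocomponents} applies (note that $G_n$ is connected). I therefore plan to compute $v_{P_T}(J)=d_1+d_2$ component by component. Here $d_i$ is the least size of a connected dominating set of $G_n[C_i\cup T]$ lying inside $C_i$.

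Two facts from the adjacency rule \eqref{eq:adjacency} do all the work. First, a single vertex $u_k$ dominates every vertex of $Y$ except $\N_{F_n}(u_k)=\{v_{k-1},v_k\}$, and symmetrically for $v_k$. Second, any vertex whose two $F_n$-neighbours all survive in $U$ is adjacent to every vertex of $T$ on the other side.

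\emph{Type \textup{(iv)}.} Take $U=X\cup\{v_i,v_{i+1}\}$. The components are $C_1=\{u_{i+1}\}$ and $C_2=U\setminus\{u_{i+1}\}$.
\begin{itemize}
\item Since $\N_{F_n}(u_{i+1})=\{v_i,v_{i+1}\}$, the vertex $u_{i+1}$ dominates $T$, so $d_1=1$.
\item For $C_2$, the only $Y$-vertices available are $v_i$ and $v_{i+1}$. The vertex $u_i$ is adjacent only to $v_{i+1}$, and $u_{i+2}$ only to $v_i$. Hence both $v_i$ and $v_{i+1}$ are needed.
\item A single $X$-vertex $u_k\in C_2$ with $k\neq i+1$ misses some vertex of $T$. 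Hence at least two $X$-vertices are needed, and $d_2\ge4$.
\item For the upper bound I will exhibit $u_a,u_b\in C_2$ such that the sets $\{v_{a-1},v_a\}$ and $\{v_{b-1},v_b\}$ share no vertex of $T$. At least one of $u_a,u_b$ must be adjacent to both $v_i$ and $v_{i+1}$, and each must be adjacent to one of them, so that $\{v_i,v_{i+1},u_a,u_b\}$ is connected.
\end{itemize}
For $n\ge6$ a choice with $a,b\notin\{i,i+1,i+2\}$ nonconsecutive works. For $n=5$ I expect to use $u_{i+2}$ (adjacent to $v_i$) together with $u_{i+4}$; the small case must be checked by hand. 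This gives $v_{P_T}(J)=1+4=5$.

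\emph{Type \textup{(vi)}.} Take $U_Y=\{v_i,v_j\}$, $A=\N_{F_n}(v_i)$ and $B=\N_{F_n}(v_j)$. The components are the stars on $A\cup\{v_j\}$ and $B\cup\{v_i\}$.
\begin{itemize}
\item In $A\cup\{v_j\}$, a dominating set must contain $v_j$, since it is the only $Y$-vertex and the vertices of $T\cap X$ need a neighbour.
\item It must also contain both vertices of $A$. One $u_k\in A$ misses $\N_{F_n}(u_k)$, and this set contains a vertex of $T\cap Y$ other than $v_i$ (indeed $\N_{F_n}(u_k)\neq\{v_i,v_j\}$ because the neighbourhoods are disjoint).
\item So $d_1=d_2=3$, since the whole star is connected and dominating. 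This gives $6$.
\end{itemize}

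\emph{Type \textup{(vii)}.} Use the labelling from Proposition~\ref{prop:mixed-cutsets}(iii). The components are the edge $\{a,b\}$ and the graph $K_{2,2}$ or $P_4$ on $\{c,d,e,f\}$.
\begin{itemize}
\item For the edge, $d_1=2$: $a$ alone fails to dominate $T\cap X$, and $b$ alone fails to dominate $T\cap Y$.
\item For the second component, the set must meet both sides. A connected $2$-set $\{x,y\}$ with $x\in\{c,d\}$ and $y\in\{e,f\}$ misses some vertex of $T$, via the $F_n$-neighbourhood argument: $x$ misses $\N_{F_n}(x)$, of which at most one vertex lies in $U$.
\item Three vertices, such as a connected triple containing both $e$ and $f$, or both $c$ and $d$, suffice by the four-cycle argument already used in Proposition~\ref{prop:mixed-cutsets}. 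So $d_2=3$ and the total is $5$?
\end{itemize}
This last tally contradicts the claimed value $6$. Lemma~\ref{lem:mixed-degree} only gives a lower bound of $4$, and it is the recount of $d_1$ that needs checking. Since $\N_{F_n}(b)=\{c,d\}\subseteq U$, $b$ is adjacent to all of $T\cap X$; similarly $a$ is adjacent to all of $T\cap Y$. So $d_1=2$ stands, and I will recheck $d_2=4$ instead. The point is that $\{e,f\}$ contributes $\N_{F_n}(e)\cup\N_{F_n}(f)$, which contains vertices of $T\cap X$ outside $\{c,d\}$, so both sides need two vertices. This pinning down of $d_2$ in type \textup{(vii)}, including the $P_4$ versus $K_{2,2}$ subcases, is the main obstacle.
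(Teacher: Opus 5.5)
\textbf{Types (iv) and (vi).} Your treatment of these two types follows the paper's proof and is sound. In the type (iv) lower bound, make the connectivity step explicit: if $v_{i+1}\notin D$, then $u_i$ must lie in $D$ and is isolated in $G_n[D]$. For the type (iv) upper bound, both your choices check out: two nonconsecutive indices outside $\{i,i+1,i+2\}$ when $n\ge6$, and $\{u_{i+2},u_{i+4}\}$ when $n=5$. The paper's single choice $\{u_i,u_{i+3},v_i,v_{i+1}\}$ handles all $n\geq5$ at once.

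\textbf{The gap: type (vii).} You never prove $d_2\geq4$ for the component on $\{c,d,e,f\}$. By Lemma~\ref{lem:twocomponents}, your argument therefore leaves open whether $v_{P_T}(J)$ is $5$ or $6$. Three specific problems:
\begin{itemize}
\item Your intermediate claim that a connected triple containing $\{e,f\}$ or $\{c,d\}$ suffices is false. The four-cycle argument of Proposition~\ref{prop:mixed-cutsets} only shows that $\{e,f\}$ jointly dominates $T\cap X$ and $\{c,d\}$ jointly dominates $T\cap Y$. A triple therefore covers only one side.
\item Your replacement sentence about $N_{F_n}(e)\cup N_{F_n}(f)$ supplies no argument: every vertex of $T\cap X$ lies outside $\{c,d\}$ trivially.
\item Your test for connected $2$-sets is inaccurate in the $P_4$ subcase. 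There $c$ has both $F_n$-neighbours in $U$, so ``$x$ misses a vertex of $T$'' fails for $x=c$.
\end{itemize}

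\textbf{The missing idea} is a private-neighbour argument, which is what the paper uses. Let $D\subseteq\{c,d,e,f\}$ be connected and dominate $G_n[\{c,d,e,f\}\cup T]$. Then $D\cap X\subseteq\{c,d\}$ and $D\cap Y\subseteq\{e,f\}$.
\begin{itemize}
\item If $F_n[U]\cong 2P_3$, the component is $K_{2,2}$. Each of $c,d$ has one $F_n$-neighbour in $U$, namely $b$, and one in $T\cap Y$. If only one of $c,d$ lay in $D$, its $F_n$-neighbour in $T$ would have no neighbour in $D$. Hence $c,d\in D$, and symmetrically (using $a$) $e,f\in D$.
\item If $F_n[U]\cong P_6$, relabel so that the extra edge is $ce$. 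Then $G_n[\{c,d,e,f\}]$ is the path $c-f-d-e$. The vertex $d$ has an $F_n$-neighbour $y\in T\cap Y$, and $y$ is adjacent to $c$ but not to $d$. Likewise $f$ has an $F_n$-neighbour $x\in T\cap X$, adjacent to $e$ but not to $f$. Thus both leaves $c,e$ are forced into $D$, and connectivity then forces $f$ and $d$.
\end{itemize}
In both subcases $d_2=4$. Combined with your correct $d_1=2$, this gives $v_{P_T}(J)=6$.
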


\begin{proof}
	In each case, $G_n\setminus T$ has exactly two components.
	Let $C_1,C_2$ denote their vertex sets.
	By Lemma~\ref{lem:twocomponents},
	$ v_{P_T}(J)=d_1+d_2$, where $d_i$ is the
	minimum size of a connected subset of $C_i$ dominating
	$G_n[C_i\cup T]$.
	The component criterion shows that each full set $C_i$
	is an admissible choice.
	
	\smallskip
	\noindent\emph{Type \textup{(iv)}.}
	By symmetry, suppose
	$U=X\cup\{v_0,v_1\}$.
	The components have vertex sets $\{u_1\}$ and
	$C=U\setminus\{u_1\}$.
	Since $N_{F_n}(u_1)=\{v_0,v_1\}$, the singleton
	$\{u_1\}$ dominates $T$, and its minimum is one.
	
	Consider a connected set $D\subseteq C$ dominating
	$G_n[C\cup T]$.
	It must contain a $Y$-vertex: otherwise connectivity
	would force $D$ to be a singleton in $X$, which cannot
	dominate the other vertices of $C\cap X$.
	In fact, $D$ must contain both $v_0$ and $v_1$.
	If $v_0$ were its only $Y$-vertex, then $u_0$ would
	have to belong to $D$, but would be isolated in $G_n[D]$.
	The case in which $v_1$ is the only $Y$-vertex is
	similar, using $u_2$. The set $D$ must also contain at least two $X$-vertices.
	Indeed, if $w$ were its only $X$-vertex, domination of
	$T=Y\setminus\{v_0,v_1\}$ would require
	$N_{F_n}(w)\subseteq\{v_0,v_1\}$.
	This forces $w=u_1$, contrary to $D\subseteq C$.
	Thus $|D|\geq4$.
	
	Conversely, the set $\{u_0,u_3,v_0,v_1\}$ is connected
	and dominates $G_n[C\cup T]$.
	The vertices $v_0,v_1$ dominate $X\setminus\{u_1\}$. Also, \( N_{F_n}(u_0)=\{v_0,v_{n-1}\} \) and \( N_{F_n}(u_3)=\{v_2,v_3\} \)
	are disjoint.
	The two minima are therefore one and four, giving
	$ v_{P_T}(J)=5$.
	
	\smallskip
	\noindent\emph{Type \textup{(vi)}.}
	The two components are three-vertex stars.
	Since $T$ meets both bipartition classes, every
	admissible dominating set in either component must
	meet both classes.
	It must therefore contain the centre and at least
	one leaf.
	Each leaf has one $F_n$-neighbour in $T$.
	If only one leaf were chosen, this neighbour would
	not be dominated: the centre lies in the same
	bipartition class as that neighbour.
	Hence both leaves are necessary.
	Both minima equal three, so
	$ v_{P_T}(J)=6$.
	
	\smallskip
	\noindent\emph{Type \textup{(vii)}.}
	The components are $K_2$ and either $P_4$ or $K_{2,2}$.
	An admissible set in the $K_2$ component must meet
	both bipartition classes, so its minimum is two. Suppose the other component is $K_{2,2}$.
	Each of its vertices has exactly one $F_n$-neighbour
	in $U$ and hence another in $T$.
	Choosing only one vertex from either bipartition
	class would leave that vertex's $F_n$-neighbour in
	$T$ undominated.
	Thus both vertices in each class are necessary.
	
	Suppose instead that the component is $P_4$.
	For either leaf, the other vertex in its bipartition
	class has exactly one $F_n$-neighbour in $U$ and
	another in $T$.
	Omitting the leaf would therefore leave a vertex
	of $T$ undominated.
	Both leaves are necessary, and connectivity then
	forces the two internal vertices.
	The minimum for the four-vertex component is four
	in either case.
	Hence $ v_{P_T}(J)=2+4=6$.
\end{proof}

\begin{lemma}\label{lem:local-v-one-survivor}
	If $T$ is of type \textup{(iii)}, then
	$ v_{P_T}(J)=5$.
\end{lemma}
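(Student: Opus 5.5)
The plan is to prove the two inequalities $v_{P_T}(J)\geq5$ and $v_{P_T}(J)\leq5$ separately. By symmetry we take $U=X\cup\{v_i\}$ and $T=Y\setminus\{v_i\}$. As computed in the proof of Theorem~\ref{thm:prime-dimensions}, $G_n\setminus T$ has three components:
\[
A=\{u_i\},\qquad B=\{u_{i+1}\},\qquad C=\{v_i\}\cup\bigl(X\setminus\{u_i,u_{i+1}\}\bigr),
\]
where $C$ is a star centred at $v_i$. Lemma~\ref{lem:twocomponents} does not apply because there are three components. Lemma~\ref{lem:mergers} would give the bound $6$, which is too large, and indeed only some of the pairwise mergers are realised by cut point sets avoiding $U$.

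\emph{Upper bound.} We exhibit an explicit separator of degree five. Choose $a,b\in X\setminus\{u_i,u_{i+1}\}$ whose $F_n$-neighbourhoods are disjoint from each other and both avoid $v_i$; for example $a=u_{i+2}$ and $b=u_{i+4}$ (adjusted if needed, using $n\geq5$). Then put
\[
f=\br{u_i}{a}\,\br{u_{i+1}}{b}\,x_{v_i}.
\]
The verification has two parts.
\begin{enumerate}[label=\textup{(\alph*)}]
\item $f\notin P_T$, since $u_i$ and $u_{i+1}$ are isolated components distinct from the component of $a$ and $b$, and $x_{v_i}\notin P_T$.
\item $f\in P_Q$ for every $Q\in\cC(G_n)\setminus\{T\}$. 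If $Q$ meets the support $\{u_i,u_{i+1},a,b,v_i\}$, this holds by column homogeneity. Otherwise $Q\subseteq Y\setminus\{v_i\}$, so by Theorem~\ref{thm:cut-point-classification} $Q$ is $\varnothing$ or of type (iii) or (iv) with surviving set $X\cup\{v_i\}$ or $X\cup\{v_i,v_{i\pm1}\}$. In each of these cases, except $Q=T$, one checks directly that $u_i$ and $a$, or $u_{i+1}$ and $b$, lie in a common component, so one of the brackets lies in $P_Q$.
\end{enumerate}
The choice of $a$ and $b$ in (b) is where care is needed: for $Q$ with surviving set $X\cup\{v_{i-1},v_i\}$ the isolated vertex is $u_i$, so we need $u_{i+1}$ joined to $b$ through $v_{i-1}$ or $v_i$.

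\emph{Lower bound.} By Lemma~\ref{lem:multiseparator} we take a column-homogeneous separator $f$ of minimum degree with support $W\subseteq U$, and argue as in Lemma~\ref{lem:twocomponents}, using the support observation, that $W$ meets each of $A$, $B$, $C$. Using the rank-one parametrisation $\varphi$ from the proof of Lemma~\ref{lem:mergers}, the primes $P_Q$ for the two type (iv) sets with surviving sets $X\cup\{v_i,v_{i+1}\}$ and $X\cup\{v_{i-1},v_i\}$ merge $A$ with $C$ and $B$ with $C$, respectively. Hence $\Delta_{AC}\Delta_{BC}$ divides $F$, which gives $d_C\geq2$ and $d_A,d_B\geq1$, so $\deg f\geq4$.

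\emph{Main obstacle.} The crux is improving this to $5$, which means showing $W\cap C$ contains $v_i$ together with at least two $X$-vertices, or else forcing $d_C\geq3$. First I will try a domination argument in the style of Lemma~\ref{lem:twocomponents}. Suppose $W\cap C\cap X=\{w\}$, and pick $t\in N_{F_n}(w)\cap T$. Set $A'=V(G_n)\setminus(W\cup\{t\})$ and compare the partition of $W$ induced by $G_n[W\cup\{t\}]$ with the partition induced by $T$. If that comparison gives $P_{A'}\cap S_W\subseteq P_T$, we get a contradiction from Lemma~\ref{lem:multiseparator}. If it fails because $t$ merges $A$ and $B$, the fallback is a finer degree count: bound $F$ modulo the image of $P_\varnothing$, using that $F$ is divisible by $\Delta_{AC}\Delta_{BC}$ and is not a multiple of $\Delta_{AB}$, to exclude the degree pattern $(d_A,d_B,d_C)=(1,1,2)$.
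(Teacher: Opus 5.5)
Your upper bound matches the paper's: $x_{v_i}\br{u_i}{u_{i+2}}\br{u_{i+1}}{u_{i+4}}$ is exactly the separator used there. The lower bound, however, is not proved. You reach only $v_{P_T}(J)\geq4$ and leave the step to $5$ open.

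The missing idea is that, for every support $W$ a degree-four separator could have, the third merger (of $A$ with $B$) \emph{is} realised by a cut point set avoiding $W$. Lemma~\ref{lem:mergers} then forces $d_A,d_B,d_C\geq2$. Concretely:
\begin{itemize}
\item Since $f\in P_Y$ and $P_Y\cap S_X=0$, we get $v_i\in W$. With your earlier steps, $\deg f\leq4$ leaves only $W=\{u_i,u_{i+1},v_i\}$ or $W=\{u_i,u_{i+1},u_k,v_i\}$.
\item Choose $t=v_j$ with $j\notin\{i-1,i,i+1\}$, and in the second case also $u_k\in N_{F_n}(t)$. One of $v_{k-1},v_k$ qualifies because $k\neq i,i+1$.
\item The type~\textup{(vi)} set with survivor set $N_{F_n}(v_i)\cup N_{F_n}(t)\cup\{v_i,t\}$ contains $W$ and induces the partition $\{u_i,u_{i+1}\}\mid W\cap C$. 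Hence $\Delta_{AB}\mid F$.
\item Together with your $\Delta_{AC}\Delta_{BC}\mid F$, this gives $\deg f\geq6$, a contradiction.
\end{itemize}
The paper handles $W=\{u_i,u_{i+1},v_i\}$ and $u_k\in\{u_{i-1},u_{i+2}\}$ via type~\textup{(v)} survivor sets and the support observation. It handles the remaining cases exactly this way.

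Your own domination attempt already contains the fix. In the generic case $t$ joins $u_i$ and $u_{i+1}$. Applying Lemma~\ref{lem:multiseparator} to $V(G_n)\setminus(W\cup\{t\})$ then yields $\Delta_{AB}\mid F$. So the case you regard as a failure is the one that closes the argument.

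Your fallback runs the wrong way. $F=c\,\Delta_{AC}\Delta_{BC}$ is consistent with every constraint you list, including not being a multiple of $\Delta_{AB}$, so no contradiction can come from those facts. Likewise, your remark that Lemma~\ref{lem:mergers} is ``too large'' applies only to supports as big as $U$. It does not apply to the small supports at issue here.

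There is also a smaller hole in your upper-bound check. A $Q$ avoiding the support need not lie in $Y\setminus\{v_i\}$, since it may contain $X$-vertices outside the support. You must also exclude mixed cut point sets whose survivor set contains $\{u_i,u_{i+1},u_{i+2},u_{i+4},v_i\}$. Such a set has four $X$-vertices, so it would be of type~\textup{(vi)}. Its second $Y$-centre would then need $F_n$-neighbourhood $\{u_{i+2},u_{i+4}\}$, which is impossible because these vertices are not cyclically consecutive. This exclusion, not the type~\textup{(iv)} check (which works for any choice of $a,b$), is what the nonconsecutiveness of $a$ and $b$ is needed for.
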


\begin{proof}
	By symmetry, suppose $U=X\cup\{v_0\}$, and write
	$a=u_0$, $b=u_1$, $c=u_2$, and $d=u_4$.
	The components of $G_n[U]$ are the singletons
	$\{a\}$ and $\{b\}$ and the star containing $v_0$. We first prove the upper bound.
	Consider \( f=x_{v_0}\br{a}{c}\br{b}{d}. \)
	Neither bracket belongs to $P_T$, and $x_{v_0}\notin P_T$.
	Since $P_T$ is prime, $f\notin P_T$.
	
	A prime deleting a supported column contains $f$,
	and $f\in P_{\varnothing}$.
	Consider a nonempty one-sided cut-point set $Q$
	whose survivor set contains the support.
	Apart from $Q=T$, the only possibilities are \( X\cup\{v_{n-1},v_0\} \) or \( X\cup\{v_0,v_1\}. \) In either case, one bracket has both endpoints in
	the nontrivial component, so $f\in P_Q$.
	A mixed survivor set containing the support would
	have to be of type \textup{(vi)}, since it contains
	four $X$-vertices.
	Its other $Y$-centre would have to have both $u_2$
	and $u_4$ as $F_n$-neighbours.
	This is impossible because these vertices are not
	cyclically consecutive.
	Thus $f$ is a separator, and
	$ v_{P_T}(J)\leq5$.
	
	For the reverse inequality, suppose that a
	column-homogeneous separator has degree at most four,
	and let $W$ be its support.
	Since it belongs to the distinct minimal prime
	$P_Y=(x_y,y_y:y\in Y)$, its support contains $v_0$.
	The support must also contain both $a$ and $b$.
	If either were omitted, one of the two type
	\textup{(iv)} survivor sets displayed above would
	induce the same partition on $W$ as the target
	prime, contradicting the support observation.
	If $W=\{a,b,v_0\}$, the type \textup{(v)} survivor set \( \{u_0,u_1,u_2,v_0,v_1\} \)
	likewise induces the target partition on $W$.
	
	Consequently, \( W=\{a,b,c',v_0\},
	~ c'\in X\setminus\{a,b\}, \)
	and the multidegree is squarefree.
	If $c'=u_2$, the preceding type \textup{(v)}
	configuration again gives a contradiction.
	If $c'=u_{n-1}$, use instead \( \{u_{n-1},u_0,u_1,v_{n-1},v_0\}. \)
	
	In the remaining cases, choose $w\in N_{F_n}(c')$.
	Then $w$ is neither $v_0$ nor a cyclic neighbour
	of $v_0$.
	The type \textup{(vi)} survivor set
	\( N_{F_n}(v_0)\cup N_{F_n}(w)\cup\{v_0,w\} \)
	induces on $W$ the partition
	$\{a,b\}\mid\{c',v_0\}$.
	Together with the two type \textup{(iv)} survivor
	sets above, this realizes all three pairwise mergers
	of the target partition \( \{a\}\mid\{b\}\mid\{c',v_0\}. \)
	Lemma~\ref{lem:mergers} forces degree at least two
	on each singleton block $\{a\}$ and $\{b\}$,
	contradicting squarefreeness.
	Therefore $ v_{P_T}(J)=5$.
\end{proof}

\begin{lemma}\label{lem:local-v-pfive}
	If $T$ is of type \textup{(v)}, then
	$ v_{P_T}(J)=6$.
\end{lemma}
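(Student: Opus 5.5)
By symmetry we take $(|U_X|,|U_Y|)=(3,2)$ and label the induced path as $a-v_i-b-v_j-c$, with $a,b,c\in X$. As in the proof of Proposition~\ref{prop:mixed-cutsets}, the components of $G_n[U]$ are then $A=\{a,v_j\}$, $C=\{c,v_i\}$ and $B=\{b\}$. The plan is to prove a lower bound of six from Lemma~\ref{lem:mergers}, and then to exhibit an explicit separator of degree six.

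\emph{Lower bound.} Let $f$ be a column-homogeneous separator of minimum degree, as provided by Lemma~\ref{lem:multiseparator}, and let $W\subseteq U$ be its support. First I would show that $W$ meets each of the three components; otherwise a coarser configuration violates the support observation. For instance, if $b\notin W$, the type \textup{(vi)} or \textup{(vii)} survivor set obtained by extending the path induces the same partition of $W$. Lemma~\ref{lem:mixed-degree} already gives $|W\cap X|\geq2$ and $|W\cap Y|\geq2$, so $v_i,v_j\in W$. Next I would produce three cut point sets avoiding $W$ that realize the pairwise mergers of $\{A\cap W,B\cap W,C\cap W\}$:
\begin{itemize}
\item Merging $A$ and $B$, and likewise $B$ and $C$: take a type \textup{(vii)} survivor set $P_6$ obtained by extending the path $P_5$ by one vertex of $F_n$ at either end. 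In $G_n$ this attaches $b$ to one of the edge components.
\item Merging $A$ and $C$: take a type \textup{(iv)} survivor set such as $X\cup\{v_i,v_j\}$, if $v_i,v_j$ are consecutive. Its components are $\{b\}$ and everything else, since $b$ is the common $F_n$-neighbour.
\end{itemize}
Each of these survivor sets contains $U$, so it avoids $W$. Checking that each induces exactly the claimed partition is routine using the adjacency rule~\eqref{eq:adjacency}. Lemma~\ref{lem:mergers} then gives $d_A,d_B,d_C\geq2$, hence $\deg f\geq6$.

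\emph{Upper bound.} I would take
\[
f=\br{a}{c}\,\br{v_i}{v_j}\,x_b\,y_b .
\]
Only $x_b^2$ or $x_by_b$ gives degree two on $B$; the ring $\K[x_b,y_b]$ has no bracket on $B$ alone. An alternative I would try is $f=\br{a}{c}\br{v_j}{v_i}\br{b}{w}$ with $w$ suitable, but $w$ must lie in $U$, so the candidate above seems cleaner. Since $a$ and $c$ lie in different components of $G_n\setminus T$, neither bracket lies in $P_T$, nor does $x_b$; as $P_T$ is prime, $f\notin P_T$. For every other $Q\in\cC(G_n)$ I would check $f\in P_Q$ by running through the types in Theorem~\ref{thm:cut-point-classification}:
\begin{itemize}
\item If $Q$ deletes a column of the support, then $f\in P_Q$ by column homogeneity.
\item If $Q=\varnothing$, then $f\in P_\varnothing$.
\item Types \textup{(iii)} and \textup{(iv)} with surviving class $X$ must contain $v_i$ and $v_j$, so they are of type \textup{(iv)} with $v_i,v_j$ consecutive; there $a$ and $c$ lie in the big component, so $\br{a}{c}\in P_Q$.
\item Types \textup{(iii)} and \textup{(iv)} with surviving class $Y$ cannot contain three $X$-vertices.
\item Types \textup{(v)}, \textup{(vi)} and \textup{(vii)} containing $U$ are exactly the configurations extending the path. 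In each, either $a$ and $c$ or $v_i$ and $v_j$ lie in a common component, which I expect to verify by the explicit component descriptions in the proof of Theorem~\ref{thm:prime-dimensions}.
\end{itemize}

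The main obstacle is the lower bound's merger step in the generic case. Here $U_X=\{a,b,c\}=N_{F_n}(v_i)\cup N_{F_n}(v_j)$ forces $v_i,v_j$ to be consecutive, so the $A$--$C$ merger via type \textup{(iv)} is available. Even so, I must carefully confirm that the $P_6$ extensions induce exactly one merger and do not collapse all three blocks, and that $W$ really meets each block. If the $P_6$ merger turns out to join all blocks at once, I would instead use a type \textup{(vi)} configuration centred at $v_i$ together with another $Y$-vertex.
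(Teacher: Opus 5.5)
\textbf{Verdict: the lower bound is correct, but the proposed separator fails, so the upper bound $v_{P_T}(J)\leq6$ is not established.}

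\textbf{Lower bound.} Your argument is sound, and a little more direct than the paper's. The paper first forces $W=U$, using type \textup{(v)}/\textup{(vi)} survivor sets to put all of $a,b,c$ into $W$, and then applies Lemma~\ref{lem:mergers} in the squarefree situation. You only need $v_i,v_j\in W$ from Lemma~\ref{lem:mixed-degree} and $b\in W$ from one $P_6$ extension. The two $P_6$ extensions and the type \textup{(iv)} set $X\cup\{v_i,v_j\}$ then do realize the three single mergers, so Lemma~\ref{lem:mergers} gives $d_A,d_B,d_C\geq2$ directly.

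\textbf{Upper bound.} The polynomial $f=\br{a}{c}\br{v_i}{v_j}x_by_y$ is not a separator; more precisely, $f=\br{a}{c}\br{v_i}{v_j}x_by_b$ fails as follows.
\begin{itemize}
\item Let $w$ be the $F_n$-neighbour of $c$ outside $U$. Let $Q$ be the type \textup{(vii)} set with survivor set $U\cup\{w\}$; this is the very set you use in the lower bound to merge $A$ with $B$.
\item The components of $G_n\setminus Q$ are $\{a,v_j,b,w\}$ and $\{c,v_i\}$. So $a$ and $c$ are separated, and so are $v_i$ and $v_j$.
\item Hence neither bracket lies in $P_Q$, and $x_b,y_b\notin P_Q$. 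Primality gives $f\notin P_Q$. The extension at the $a$ end fails in the same way.
\end{itemize}
Your claim that every configuration extending the path puts $a,c$ or $v_i,v_j$ in a common component is therefore false. It also contradicts your own lower bound. The divisibility step in the proof of Lemma~\ref{lem:mergers} applies to any column-homogeneous separator supported on $U$, and shows that its image under $\varphi$ must be divisible by $\Delta_{AB}\Delta_{AC}\Delta_{BC}$. Your $f$ has image $\pm t^\alpha\Delta_{AC}^2z_{B1}z_{B2}$. The degree two on the singleton block $\{b\}$ must come from brackets joining $b$ to the two other blocks, not from $x_by_b$.

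\textbf{Fix.} Take the paper's separator $\br{a}{b}\br{b}{c}\br{v_i}{v_j}$, which has one bracket for each pair of blocks ($A$--$B$, $B$--$C$, $C$--$A$).
\begin{itemize}
\item It lies outside $P_T$, because each bracket joins distinct components.
\item It lies in every $P_Q$ with $Q$ meeting $U$, by column homogeneity.
\item Let $Q\neq T$ avoid $U$. Its survivor set contains some $t\in T$. By Lemma~\ref{lem:component-criterion} applied to $T$, this $t$ has neighbours in two of $A,B,C$. The bracket joining those two blocks then lies in $P_Q$.
\end{itemize}
This avoids any type-by-type check.
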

\begin{proof}
	By symmetry, write the deleted path as
	$a-d-b-e-c$, where $a,b,c\in X$ and $d,e\in Y$.
	The components of $G_n[U]$ have vertex sets \( A=\{a,e\}, ~ B=\{c,d\},~\text{and}~ C=\{b\}. \) Consider \( f=\br{a}{b}\br{b}{c}\br{d}{e}. \)
	It has degree six and support $U$.
	Each bracket joins two distinct components, so
	none belongs to $P_T$.
	The primality of $P_T$ gives $f\notin P_T$.
	
	Any other minimal prime deleting a supported
	column contains $f$.
	If a different minimal prime avoids $U$, its survivor
	set strictly contains $U$, and hence contains some
	$t\in T$.
	Restoring $t$ joins at least two of $A,B,C$ by the
	component criterion.
	The product $f$ has a bracket joining each pair
	of these components, so it belongs to that prime.
	Thus $f$ is a separator and
	$ v_{P_T}(J)\leq6$.
	
	Suppose now that a column-homogeneous separator has
	degree at most five, and let $W$ be its support.
	Lemma~\ref{lem:mixed-degree} implies that $d,e\in W$
	and that at least two of $a,b,c$ belong to $W$.
	In fact, all three must belong to $W$.
	To see this, suppose one of $a,b,c$ is omitted,
	and let $r,s$ be the other two.
	The survivor set \( \{r,s\}\cup N_{F_n}(r)\cup N_{F_n}(s) \)
	is of type \textup{(v)} or \textup{(vi)}.
	It contains $W$ and induces the same partition
	on $W$ as the target prime.
	Indeed, the pairs $(r,s)$ are $(b,c)$, $(a,c)$,
	and $(a,b)$ when $a$, $b$, and $c$ are omitted,
	respectively.
	The support observation gives a contradiction.
	
	It follows that $W=U$ and the separator has
	squarefree multidegree.
	The type \textup{(iv)} survivor set
	$X\cup\{d,e\}$ merges $A$ with $B$.
	Extending the deleted path by one vertex at either
	end gives two type \textup{(vii)} survivor sets,
	which merge $C$ with $A$ and $C$ with $B$,
	respectively.
	Lemma~\ref{lem:mergers} therefore forces degree at
	least two on the singleton block $\{b\}$,
	contradicting squarefreeness.
	Hence $ v_{P_T}(J)=6$.
\end{proof}

\begin{lemma}\label{lem:local-v-part}
	Let $T=X$ or $T=Y$.
	Then $ v_{P_T}(J)\geq4$.
	If $n\geq6$, equality holds.
\end{lemma}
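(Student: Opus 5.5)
By the symmetry $X\leftrightarrow Y$ it suffices to treat $T=X$, so $U=Y$ and $G_n\setminus T$ consists of the $n$ isolated vertices of $Y$. By Lemma~\ref{lem:multiseparator}, a minimum-degree separator $f$ can be taken column-homogeneous, with support $W\subseteq Y$. Since every component of $G_n[Y]$ is a single vertex, $P_X\cap S_W=0$, so $f\notin P_X$ just says $f\neq0$. The lower bound comes from the support observation: it suffices to find, for each small $W$, some $Q\in\cC(G_n)\setminus\{X\}$ avoiding $W$ whose survivor graph separates all vertices of $W$. Then $P_Q\cap S_W=0$, which forces $f=0$.

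\emph{Lower bound.} If $|W|\le1$, take $Q=\varnothing$; on a singleton it induces the same (trivial) partition. If $W=\{v_i,v_j\}$, there are two cases.
\begin{itemize}
\item If the indices are cyclically consecutive, $j=i+1$, the type~(iv) set with survivors $Y\cup\{u_i,u_{i+1}\}$ isolates $v_i$ and separates the two vertices.
\item Otherwise, the type~(vi) set with $Y$-centres $v_i,v_j$ places them in different $K_{1,2}$'s.
\end{itemize}
Either way degree at most $3$ is excluded unless $|W|=3$ with squarefree multidegree. For $W=\{v_i,v_j,v_k\}$, I would first look for a mixed survivor set of type~(v), (vi) or (vii) containing $W$ with its three vertices in distinct components. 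If none exists, I would produce three cut sets each merging exactly one pair, using type~(iii) survivors $Y\cup\{u_l\}$, which isolate $v_{l-1},v_l$ and merge the rest, and type~(iv) survivors, which isolate a single vertex. Lemma~\ref{lem:mergers} then forces degree at least $2$ on each singleton block, contradicting squarefreeness. Together these give $v_{P_X}(J)\ge4$.

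\emph{Upper bound for $n\ge6$.} I would exhibit an explicit degree-four separator, a product of two brackets among $Y$-vertices, or a bracket times two variables, supported on four vertices of $Y$. Then $f\notin P_X$ automatically, and $f\in P_\varnothing$ because it contains a bracket. Primes deleting a supported column contain $f$ by column homogeneity. The remaining primes avoiding $W$ have survivors $Y\cup\{u_l\}$ or $Y\cup\{u_l,u_{l+1}\}$, or are of type~(vi) with four $Y$-survivors $N_{F_n}(u_i)\cup N_{F_n}(u_j)$. For each of these I must check that some bracket factor has both ends in one component. This requires choosing the index pattern of $W$ so that no pair $\{v_{l-1},v_l\}$ meets both brackets essentially, and so that $W$ is not a union of two disjoint consecutive pairs splitting both brackets. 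The hypothesis $n\ge6$ should be exactly what leaves room for such a choice.

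The main obstacle is this construction. Naive choices such as $\br{v_0}{v_2}\br{v_1}{v_4}$ fail against the type~(iii) set with survivors $Y\cup\{u_1\}$, which isolates $v_0$ and $v_1$. So I would first try mixed products such as $\br{v_0}{v_3}\,x_{v_1}x_{v_4}$, or a single bracket with repeated columns. I would then verify the three survivor families above case by case. If no squarefree-support example works, I would fall back on a non-monomial combination found by solving the degree-four multigraded membership conditions directly.
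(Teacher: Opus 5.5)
Your lower bound is correct. It differs only mildly from the paper's route. The paper restricts the type~(iii) primes to the ideals $J_{K_{X\setminus\{u_i,u_{i+1}\}}}$ and argues by divisibility by brackets. You instead use the support observation with $P_\varnothing$ and the type~(iv)/(vi) sets for $|W|\le 2$, and Lemma~\ref{lem:mergers} for $|W|=3$. The $|W|=3$ case needs no case split. For each $v_k\in W$, the type~(iv) survivor set $Y\cup\{u_k,u_{k+1}\}$ isolates exactly $v_k$ and merges the other two vertices of $W$, so all three pairwise mergers are always available.

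The genuine gap is the equality for $n\ge 6$: you never exhibit a degree-four separator, and the candidates you intend to try first cannot work. Suppose $f=\br{a}{b}\,m$ with $m$ a monomial in columns of $Y$ (a bracket times two variables, or a bracket with repeated columns), and write $a=v_l$. The type~(iii) set $Q$ with survivors $Y\cup\{u_l\}$ avoids all of $Y$ and isolates $N_{F_n}(u_l)=\{v_{l-1},v_l\}\ni a$. So $P_Q$ contains neither $\br{a}{b}$ nor any variable of $m$, and $f\notin P_Q$ by primality. Your fallback of solving the multigraded membership conditions directly is a plan, not a proof.

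What works is the mirror of the paper's separator, $f=\br{v_0}{v_1}\br{v_3}{v_4}$, that is, two brackets on disjoint cyclically consecutive pairs. The verification runs as follows.
\begin{enumerate}[label=\textup{(\alph*)}]
\item Type~(iii): the pair isolated by $Y\cup\{u_l\}$ meets $\{v_0,v_1\}$ only for $l\in\{0,1,2\}$, and meets $\{v_3,v_4\}$ only for $l\in\{3,4,5\}$. These index sets are disjoint modulo $n$ exactly when $n\ge 6$, which is where the hypothesis enters.
\item Type~(iv): a set $Y\cup\{u_l,u_{l+1}\}$ isolates only $v_l$, so one bracket lies in its large component.
\item Type~(vi): the only survivor set containing $\{v_0,v_1,v_3,v_4\}$ has components $\{v_0,v_1,u_4\}$ and $\{v_3,v_4,u_1\}$, each containing the pair of one bracket.
\item Types~(v) and~(vii) contain at most three $Y$-vertices, so they cannot avoid the support.
\item $P_\varnothing$ contains both brackets, and every remaining minimal prime other than $P_X$ deletes a column of the support.
\end{enumerate}
Your two stated requirements (no isolated pair meeting both brackets, no two-pair split cutting both brackets) were the right ones, and this $f$ satisfies them. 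Without such an explicit $f$, however, the equality part of the statement is not established.
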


\begin{proof}
	By symmetry, take $T=Y$.
	A column-homogeneous separator is supported on $X$.
	For each $i\in\mathbb Z_n$, the type \textup{(iii)}
	prime whose survivor set is $X\cup\{v_i\}$ restricts
	to the ideal \( I_i=J_{K_{X\setminus\{u_i,u_{i+1}\}}} \)
	in $S_X$. Thus every separator belongs to each $I_i$. Suppose a separator has degree at most three,
	and let $W$ be its support.
	If $|W|\leq2$, choose a cyclic pair
	$\{u_i,u_{i+1}\}$ leaving at most one vertex of $W$
	outside the pair.
	Then $I_i\cap S_W=0$, a contradiction.
	
	Suppose $|W|=3$.
	If $W$ contains two cyclically consecutive vertices,
	the same argument applies.
	Otherwise, write $W=\{a,b,c\}$.
	For each of these three vertices, choose a cyclic
	pair containing it.
	Such a pair contains neither of the other two vertices.
	Membership in the corresponding ideals therefore
	forces divisibility by \( \br{a}{b},~ \br{a}{c},~\text{and}~ \br{b}{c}. \)
	These are pairwise nonassociate irreducible polynomials,
	so their product divides the separator.
	Its degree is consequently at least six, again
	a contradiction.
	This proves $ v_{P_Y}(J)\geq4$.
	
	Now suppose $n\geq6$, and consider \( f=\br{u_0}{u_1}\br{u_3}{u_4}. \)
	This is a nonzero polynomial in $S_X$, so
	$f\notin P_Y$.
	Among the four indices $0,1,3,4$, the only cyclically
	consecutive pairs are $\{0,1\}$ and $\{3,4\}$.
	Hence any cyclic pair $\{i,i+1\}$ meets at most
	one of these two pairs.
	At least one bracket factor lies in $I_i$,
	which verifies membership in every type
	\textup{(iii)} prime.
	
	A type \textup{(iv)} prime avoiding the support
	has only one isolated $X$-vertex.
	At least one bracket therefore lies entirely in
	its nontrivial component.
	A mixed survivor set containing the support must
	be of type \textup{(vi)}.
	Its two deleted-cycle neighbourhood pairs are
	$\{u_0,u_1\}$ and $\{u_3,u_4\}$.
	Each pair lies in a component of the corresponding
	graph $G_n[U]$, so $f$ belongs to this prime as well.
	The empty-set prime contains $f$, and every remaining
	minimal prime other than $P_Y$ deletes a supported
	column.
	Thus $f$ is a separator of degree four, proving
	$ v_{P_Y}(J)=4$.
\end{proof}

\begin{lemma}\label{lem:local-v-part-five}
	If $n=5$ and $T=X$ or $T=Y$, then
	$ v_{P_T}(J)=6$.
\end{lemma}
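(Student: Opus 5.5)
The plan is to take $T=Y$ by symmetry and prove the two inequalities $v_{P_Y}(J)\geq6$ and $v_{P_Y}(J)\leq6$ separately. The constraints come from Theorem~\ref{thm:cut-point-classification} specialized to $n=5$. By Lemma~\ref{lem:multiseparator}, a minimum-degree separator $f$ can be taken column-homogeneous with support $W\subseteq X=\{u_0,\dots,u_4\}$. A minimal prime deleting a column of $W$ contains $f$ automatically. So only the primes $P_Q$ with $Q\cap W=\varnothing$ matter, and for those the condition is $f\in P_Q\cap S_W$. In $S_X$, these restrictions are of the following kinds:
\begin{itemize}
\item the ideals $I_i=J_{K_{X\setminus\{u_i,u_{i+1}\}}}$ coming from type \textup{(iii)};
\item the ideals $J_{K_{X\setminus\{u_{i+1}\}}}$ coming from type \textup{(iv)}, where $u_{i+1}$ is isolated;
\item for $n=5$, the ideals $(\br{u_i}{u_{i+1}},\br{u_j}{u_{j+1}})$ coming from type \textup{(vi)}, with $v_i,v_j$ nonconsecutive and the fifth $X$-vertex deleted;
\item the zero ideal on a consecutive triple $\{u_i,u_{i+1},u_{i+2}\}$, coming from type \textup{(v)} with three $X$-vertices.
\end{itemize}
The last kind forces $W$ not to lie inside three cyclically consecutive vertices.

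For the lower bound I would argue by cases on $|W|$, assuming $\deg f\leq5$. The case $|W|\leq3$ is handled verbatim by the argument of Lemma~\ref{lem:local-v-part}, which did not use $n\geq6$. That argument gives either $I_i\cap S_W=0$ or divisibility by three pairwise nonassociate brackets, hence degree at least $6$. In fact, for $n=5$ every $3$-subset of $X$ either is consecutive or contains a consecutive pair, so it is excluded outright. This leaves $|W|\in\{4,5\}$, and up to rotation $W=X\setminus\{u_4\}$ or $W=X$, with total degree $4$ or $5$. Here I would use the rank-one parametrization $\varphi$ of Lemma~\ref{lem:mergers}, with one pair $(z_{w1},z_{w2})$ per vertex $w\in W$, since the target partition of $W$ consists of singletons. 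Then $\varphi(f)=t^\alpha F$, where $F$ has degree $\alpha_w$ in the pair indexed by $w$.

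Each relevant $Q$ gives a condition on $F$.
\begin{itemize}
\item A two-block partition $\{D,E\}$ in which one block is a pair $\{a,b\}$ and the other block is a singleton or a pair forces $\Delta_{ab}\mid F$ whenever the type \textup{(vi)} or \textup{(iii)} ideal is generated by that single bracket.
\item The type \textup{(iv)} partitions $\{u_k\}\mid W\setminus\{u_k\}$ force $F$ to vanish once all $z_w$ with $w\neq u_k$ are made proportional.
\end{itemize}
For $W=X\setminus\{u_4\}$, one gets for instance $\Delta_{u_1u_2}\mid F$ from $I_3$ or $I_4$, restricted appropriately. Collecting these divisibilities, one should find enough pairwise nonassociate $\Delta$'s dividing $F$ that some vertex receives degree at least $2$ while the total degree is at least $6$. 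This is in the spirit of Lemma~\ref{lem:mergers}, but with four or five blocks.

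I expect this bookkeeping to be the main obstacle. The type \textup{(iv)} conditions are not principal, so they do not give divisibility directly. The first thing I would try is to specialize: set all $z_w$ with $w\neq u_k$ equal to a common generic pair, which turns such a condition into vanishing of a binary form. Combined with the principal conditions, this should force the degree count.

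For the upper bound I would write down an explicit degree-six product of brackets on $X$, for example
\[
f=\br{u_0}{u_2}\,\br{u_2}{u_4}\,\br{u_4}{u_1},
\]
or a variant chosen so that every pair of vertices separated by some admissible partition is joined by a bracket lying inside one of its blocks. Then $f\notin P_Y$, because a nonzero product of polynomials in $S_X$ is nonzero. Membership in $P_\varnothing$ and in every prime deleting a supported column is automatic. The finitely many remaining primes of types \textup{(iii)}, \textup{(iv)}, \textup{(v)} and \textup{(vi)} are checked by exhibiting one bracket whose two endpoints lie in a common component, adjusting the choice of brackets if some case fails. Since $n=5$ is a single finite instance, a computer-algebra verification of both bounds would be a useful sanity check.
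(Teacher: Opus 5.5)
\paragraph{The gap: the case $W=X$.} This case carries the real content of the lower bound, and your proposal does not handle it. Here $|W|=5$ and $\deg f\leq5$, so $f$ has squarefree multidegree $e_{u_0}+\dots+e_{u_4}$. None of the conditions you list is principal on $S_X$:
\begin{itemize}
\item each type (iii) prime restricts to $I_i=J_{K_{\{u_{i+2},u_{i+3},u_{i+4}\}}}$;
\item each type (iv) prime restricts to $J_{K_{X\setminus\{u_k\}}}$;
\item every mixed survivor set misses an $X$-vertex, so it deletes a supported column and imposes nothing.
\end{itemize}
So collecting pairwise nonassociate $\Delta$'s produces no divisibility at all, and the hypotheses of Lemma~\ref{lem:mergers} are never met. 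Your fallback, specializing the type (iv) conditions, also cannot suffice. Since $J_{K_{X\setminus\{u_{i+1}\}}}\supseteq I_i$, these conditions are weaker than the type (iii) ones. Concretely, $\br{u_0}{u_1}\br{u_2}{u_3}x_{u_4}$ has the right multidegree and lies in every $J_{K_{X\setminus\{u_k\}}}$. It does not lie in $I_1=J_{K_{\{u_3,u_4,u_0\}}}$.

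What is missing is a direct proof that the multidegree-$(1,\dots,1)$ component of $\bigcap_i I_i$ is zero, using the five non-principal type (iii) conditions. The paper does this by splitting $f$ by total $x$-degree $r$. For $r=2$, write $f=\sum c_{ij}x_{u_i}x_{u_j}\prod_{k\neq i,j}y_{u_k}$. Membership in $I_i$ forces $c_{i,i+1}=0$ and $c_{i+2,i+3}+c_{i+2,i+4}+c_{i+3,i+4}=0$, hence $c_{i+2,i+4}=0$. As $i$ varies this kills every coefficient. The cases $r\leq1$ are similar, and $r\geq3$ follows by exchanging $x$ and $y$. Your specialization idea, applied to the condition that $F$ vanishes whenever $z_{u_{i+2}},z_{u_{i+3}},z_{u_{i+4}}$ are parallel, would amount to the same computation, but it must actually be carried out.

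\paragraph{Two incorrect concrete claims.} First, for $W=\{u_0,u_1,u_2,u_3\}$, the ideals $I_3$ and $I_4$ restrict to $J_{K_{\{u_0,u_1,u_2\}}}$ and $J_{K_{\{u_1,u_2,u_3\}}}$. These are not principal, so they do not give $\Delta_{u_1u_2}\mid F$. In fact $\br{u_1}{u_2}$ need not divide a separator with this support. The principal conditions come from $I_0,I_1,I_2$ and give $\br{u_2}{u_3}$, $\br{u_0}{u_3}$, $\br{u_0}{u_1}$. Their product already has degree six, which settles $|W|=4$ exactly as in the paper.

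Second, your upper-bound candidate $\br{u_0}{u_2}\br{u_2}{u_4}\br{u_4}{u_1}$ is not a separator. Every factor involves $u_1$ or $u_2$, so it is not in the prime $I_1$; similarly it is not in $I_4$. The paper's choice $\br{u_0}{u_1}\br{u_2}{u_3}\br{u_4}{u_0}$ passes your bracket-by-bracket check, as does $\br{u_1}{u_2}\br{u_2}{u_4}\br{u_4}{u_0}$.
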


\begin{proof}
	By symmetry, take $T=Y$.
	The polynomial \( f=\br{u_0}{u_1}\br{u_2}{u_3}\br{u_4}{u_0} \)
	is nonzero and supported on all five $X$-vertices,
	so $f\notin P_Y$.
	Every cyclic pair misses at least one bracket pair,
	which verifies membership in every type
	\textup{(iii)} prime.
	For a type \textup{(iv)} prime, at least one bracket
	avoids its isolated $X$-vertex and therefore belongs
	to its nontrivial component.
	No mixed survivor set contains all five $X$-vertices.
	Together with membership in $P_{\varnothing}$,
	these observations show that $f$ is a separator.
	Hence $ v_{P_Y}(J)\leq6$.
	
	For the reverse inequality, suppose a
	column-homogeneous separator has degree at most five,
	and write $W$ for its support.
	As in Lemma~\ref{lem:local-v-part}, it belongs to
	\[
	I_i=J_{K_{X\setminus\{u_i,u_{i+1}\}}},
	~ i\in\mathbb Z_5.
	\]
	If $|W|\leq2$, some $I_i$ has zero intersection
	with $S_W$.
	The same holds when $|W|=3$, since every three
	vertices in the cyclic order on $X$ contain a
	consecutive pair.
	Thus $|W|\geq4$.
	
	If $|W|=4$, the consecutive pairs contained in $W$
	form a path on four vertices.
	Each of its three edges leaves a complementary pair
	in $W$, and membership in the corresponding $I_i$
	forces the bracket on that pair to divide the separator.
	The three brackets are distinct irreducible polynomials.
	Their product has degree six, a contradiction.
	
	It remains to consider $W=X$.
	The separator must then have degree five and squarefree
	column multidegree.
	All the ideals $I_i$ are homogeneous for the bigrading
	$\deg x_w=(1,0)$ and $\deg y_w=(0,1)$.
	We may therefore examine each total $x$-degree $r$
	separately.
	
	For $r=0$ or $r=5$, the only monomial does not belong
	to any $I_i$.
	For $r=1$, membership in an $I_i$ isolating a specified
	column forces the coefficient of the monomial whose
	$x$ occurs in that column to vanish.
	Thus all coefficients vanish.
	The case $r=4$ follows by exchanging $x$ and $y$.
	
	For $r=2$, write \( f=\sum_{0\leq i<j\leq4}
	c_{ij}x_{u_i}x_{u_j}
	\prod_{k\notin\{i,j\}}y_{u_k}, \)
	where $c_{ji}=c_{ij}$ for distinct indices.
	Membership in $I_i$ forces $c_{i,i+1}=0$,
	with indices read modulo five.
	It also forces the sum of the three coefficients
	on the complementary triple
	$X\setminus\{u_i,u_{i+1}\}$ to be zero.
	Two of those pairs are cyclically consecutive,
	so their coefficients already vanish.
	The remaining coefficient therefore vanishes as well.
	As $i$ varies, all five nonconsecutive pairs occur
	in this way.
	Hence every coefficient is zero.
	The case $r=3$ follows by exchanging $x$ and $y$. No nonzero separator of degree at most five exists.
	Therefore $ v_{P_Y}(J)=6$.
\end{proof}
\begin{theorem}\label{thm:local-v}
	Let $T\in\cC(G_n)$, with types as in
	Theorem~\ref{thm:cut-point-classification}.
	Then the following hold:
	\begin{enumerate}[label=\textup{(\roman*)}]
		\item If $T$ is of type \textup{(i)}, then
		$ v_{P_T}(J)=4$.
		
		\item If $T$ is of type \textup{(ii)}, then
		$ v_{P_T}(J)=6$ when $n=5$, and
		$ v_{P_T}(J)=4$ when $n\geq6$.
		
		\item If $T$ is of type \textup{(iii)} or
		\textup{(iv)}, then $ v_{P_T}(J)=5$.
		
		\item If $T$ is of type \textup{(v)},
		\textup{(vi)}, or \textup{(vii)}, then
		$ v_{P_T}(J)=6$.
	\end{enumerate}
	Consequently, $ v(J)=4$.
\end{theorem}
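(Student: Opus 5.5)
The theorem collects results already proved, so the plan is to assemble them type by type along the classification of Theorem~\ref{thm:cut-point-classification}, which guarantees that every $T\in\cC(G_n)$ falls into exactly one of the seven types.

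For type (i), $G_n$ is connected and not complete (it is bipartite with $n\geq5$). The result of \cite{ConnectedDomination} then gives $v_{P_\varnothing}(J)=\gamma_c(G_n)$. This equals $4$ by Proposition~\ref{prop:connected-domination}, as recorded in Corollary~\ref{cor:empty-local-v}.

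The remaining types are handled as follows.
\begin{itemize}
\item Type (ii): use Lemma~\ref{lem:local-v-part-five} when $n=5$ and Lemma~\ref{lem:local-v-part} when $n\geq6$.
\item Type (iii): use Lemma~\ref{lem:local-v-one-survivor}.
\item Type (iv): use the first assertion of Lemma~\ref{lem:local-v-two-components}.
\item Type (v): use Lemma~\ref{lem:local-v-pfive}.
\item Types (vi) and (vii): use the second assertion of Lemma~\ref{lem:local-v-two-components}.
\end{itemize}
The only care needed is that these lemmas are stated for one representative up to the symmetries of $G_n$. These symmetries are the rotation $u_i\mapsto u_{i+1}$, $v_i\mapsto v_{i+1}$, and the reflection exchanging $X$ and $Y$, namely $u_i\mapsto v_{-i}$, $v_i\mapsto u_{-i+1}$. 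Both preserve $F_n$, hence $G_n$, and they act transitively on each type. Local $v$-numbers are invariant under the induced automorphism of $S$, which permutes the minimal primes and preserves degrees. So the representative computations cover every set of each type.

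For the global statement, $v(J)=\min_{T\in\cC(G_n)}v_{P_T}(J)$. By parts (i)--(iv) every local value is at least $4$, and the value $4$ is attained at $T=\varnothing$. Hence $v(J)=4$ for all $n\geq5$, including $n=5$, where type (ii) gives $6$ but type (i) still gives $4$.

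The main obstacle is not the assembly but the symmetry bookkeeping. I would check explicitly that the reflection above maps edges of $F_n$ to edges of $F_n$: the edge $u_iv_i$ goes to $v_{-i}u_{-i+1}$, and the edge $u_iv_{i-1}$ goes to $v_{-i}u_{-i}$, both of which lie in $F_n$. Composing the reflection with rotations, the image of a set of a given type is again of that type, because the types are defined purely through the isomorphism class of $F_n[U]$ and the part sizes.
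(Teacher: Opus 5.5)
Your assembly is exactly the paper's proof. Type (i) comes from Corollary~\ref{cor:empty-local-v}, type (ii) from Lemmas~\ref{lem:local-v-part} and~\ref{lem:local-v-part-five}, type (iii) from Lemma~\ref{lem:local-v-one-survivor}, type (v) from Lemma~\ref{lem:local-v-pfive}, and types (iv), (vi), (vii) from Lemma~\ref{lem:local-v-two-components}. Then $v(J)=4$ is the minimum, attained at $T=\varnothing$.

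The symmetry paragraph should be dropped, because it is unnecessary and, as written, wrong.

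It is unnecessary because each cited lemma is stated for \emph{every} $T$ of its type. ``By symmetry'' appears only inside their proofs. There it is used either for types that form a single orbit ((ii), (iii), (iv)) or with a generic labelling ((v), (vi), (vii)).

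It is wrong on two counts.
\begin{enumerate}
\item Your map $u_i\mapsto v_{-i}$, $v_i\mapsto u_{-i+1}$ is not an automorphism. It sends $v_{i-1}$ to $u_{-i+2}$, not to $u_{-i}$, and $u_{-i+2}\notin N_{F_n}(v_{-i})=\{u_{-i},u_{-i+1}\}$. A correct $X$--$Y$ exchange is $u_i\mapsto v_{-i}$, $v_i\mapsto u_{-i}$.
\item Even the full automorphism group of $G_n$ is not transitive on each type. Any automorphism of the connected bipartite graph $G_n$ preserves $\{X,Y\}$, hence preserves $E(F_n)$ and the isomorphism class of $F_n[U]$. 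So the group is the dihedral group of $F_n\cong C_{2n}$, of order $4n$. Consequently the $P_6$ sets and the $2P_3$ sets of type (vii) lie in different orbits. For $n\geq6$, the type (vi) sets also split according to the cyclic distance between the two centres, for example centres $\{v_0,v_2\}$ versus $\{v_0,v_3\}$.
\end{enumerate}

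Had your argument really depended on transitivity, types (vi) and (vii) would be left uncovered. Since it does not, simply cite the lemmas as stated.
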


\begin{proof}
	The value for type \textup{(i)} follows from
	Corollary~\ref{cor:empty-local-v}.
	The values for type \textup{(ii)} follow from
	Lemmas~\ref{lem:local-v-part}
	and~\ref{lem:local-v-part-five}.
	Type \textup{(iii)} is covered by
	Lemma~\ref{lem:local-v-one-survivor}, and type
	\textup{(v)} by Lemma~\ref{lem:local-v-pfive}.
	The remaining types follow from
	Lemma~\ref{lem:local-v-two-components}.
	The minimum of these local values is four.
\end{proof}

\section{Induced paths and regularity}\label{sec:regularity}

We determine the length of a longest induced path in $G_n$ and
deduce  bounds for $\reg(S_{G_n}/J_{G_n})$.  For a graph $G$, let $\ell(G)$ denote the length of a longest induced
path in $G$.

\begin{proposition}\label{prop:longest-induced-path}
	For every $n\geq5$, the length of a longest induced path in $G_n$
	is $6$.
\end{proposition}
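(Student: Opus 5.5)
The plan is to prove the upper bound $\ell(G_n)\le 6$ by a degree count in the deleted cycle $F_n$, and to prove the lower bound by exhibiting an explicit induced path on seven vertices.

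\emph{Upper bound.} Suppose $G_n$ contains an induced path on at least eight vertices. Any eight consecutive vertices of it again form an induced path $P$, so we may assume $P$ has exactly eight vertices. Since $G_n$ is bipartite, $P$ alternates between $X$ and $Y$. Hence $P$ has four vertices in each class, and its two endpoints lie in opposite classes.

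Let $x\in X$ be the endpoint of $P$ lying in $X$. Because $P$ is induced, $x$ is adjacent in $G_n$ to exactly one of the four $Y$-vertices of $P$. By \eqref{eq:adjacency}, each of the other three $Y$-vertices must then be an $F_n$-neighbour of $x$. This contradicts the fact that every vertex of $F_n$ has degree two. Therefore every induced path has at most seven vertices, so $\ell(G_n)\le 6$.

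\emph{Lower bound.} Let $U$ consist of seven consecutive vertices of the cycle $F_n$. Since $2n\ge10>7$, the graph $F_n[U]$ is an induced path $P_7$. Label it
\[
x_2-y_3-x_1-y_2-x_4-y_1-x_3,
\]
with $x_i\in X$ and $y_j\in Y$. Then $|U_X|=4$ and $|U_Y|=3$, and $G_n[U]$ is the bipartite complement of $F_n[U]$ inside $K_{U_X,U_Y}$.

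Listing non-edges, the vertex $x_1$ misses $y_2,y_3$; the vertex $x_2$ misses $y_3$; the vertex $x_3$ misses $y_1$; and $x_4$ misses $y_1,y_2$. Consequently the edges of $G_n[U]$ are exactly
\[
x_1y_1,\; y_1x_2,\; x_2y_2,\; y_2x_3,\; x_3y_3,\; y_3x_4.
\]
So $G_n[U]$ is the induced path $x_1-y_1-x_2-y_2-x_3-y_3-x_4$ of length $6$. The only care needed is this bookkeeping: the labelling above is chosen so that the bipartite complement of $P_7$ in $K_{4,3}$ is visibly again a $P_7$.

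Combining the two bounds gives $\ell(G_n)=6$. There is no real obstacle in this proof. The main point to state precisely is the reduction in the upper bound to an induced subpath on exactly eight vertices, which is immediate because any set of consecutive vertices of an induced path again induces a path.
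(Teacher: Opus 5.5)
Your proof is correct and follows the paper's argument. The upper bound is the same endpoint argument: an endpoint would have three non-neighbours of the opposite class on the path, all of which would have to be $F_n$-neighbours. Your path built from seven consecutive vertices of $F_n$ is exactly the paper's path $u_0,v_1,u_{n-1},v_0,u_2,v_{n-1},u_1$ under $x_1=u_0$, $x_2=u_{n-1}$, $x_3=u_2$, $x_4=u_1$, $y_1=v_1$, $y_2=v_0$, $y_3=v_{n-1}$. The only point to state explicitly is that the seven consecutive vertices must start and end in $X$, so that $|U_X|=4$.
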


\begin{proof}
	Consider the sequence \(  	u_0,\ v_1,\ u_{n-1},\ v_0,\ u_2,\ v_{n-1},\ u_1. \)
	Consecutive vertices are adjacent in $G_n$ by
	\eqref{eq:adjacency}. Moreover, every nonconsecutive pair belonging
	to opposite bipartition classes is an edge of the deleted cycle
	$F_n$. Hence the displayed sequence is an induced path of length
	$6$, and therefore \( \ell(G_n)\geq6. \)
	
	Suppose that $G_n$ contains an induced path of length at least $7$.
	Such a path contains at least four vertices from each bipartition
	class. Consider an endpoint $u$ of the path. Among the at least four
	vertices of the opposite bipartition class occurring on the path,
	the endpoint $u$ is adjacent to the first such vertex and, by
	inducedness, is nonadjacent to all the other three. This is
	impossible, because every vertex of $G_n$ has exactly two
	non-neighbours in the opposite bipartition class, namely its two
	neighbours in $F_n$. Hence no induced path has length at least $7$,
	and therefore $\ell(G_n)=6$.
\end{proof}

\begin{corollary}\label{cor:regularity}
	For every $n\geq5$, \( 6\leq \reg(S_{G_n}/J_{G_n})\leq2n-2. \)
\end{corollary}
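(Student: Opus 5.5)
The plan is to prove the two inequalities separately, using known general bounds for the regularity of binomial edge ideals together with Proposition~\ref{prop:longest-induced-path}.

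For the lower bound, I will invoke the theorem of Matsuda and Murai: for every graph $G$, $\reg(S_G/J_G)\geq \ell(G)$, where $\ell(G)$ is the length of a longest induced path. The argument behind it is that for an induced subgraph $H$ of $G$, the graded Betti numbers of $S_H/J_H$ are bounded above by those of $S_G/J_G$. This follows from the column multigrading: restricting the multigraded resolution to multidegrees supported on $V(H)$ yields the resolution of $S_H/J_H$. For an induced path $P$ of length $\ell$, $J_P$ is a complete intersection generated by $\ell$ quadrics, so $\reg(S_P/J_P)=\ell$. Applying this to the induced path of length $6$ exhibited in Proposition~\ref{prop:longest-induced-path} gives $\reg(S_{G_n}/J_{G_n})\geq6$.

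For the upper bound, I will use the other half of the Matsuda--Murai theorem: $\reg(S_G/J_G)\leq |V(G)|-1$ for any graph. This only yields $2n-1$, so it has to be sharpened by one. The sharper tool is the Kiani--Saeedi Madani result that $\reg(S_G/J_G)=|V(G)|-1$ holds exactly when $G$ is a path; for connected $G$ that is not a path, this gives $\reg(S_G/J_G)\leq|V(G)|-2$. The graph $G_n$ is connected, by Proposition~\ref{prop:vertex-connectivity} or the Hamiltonicity in Proposition~\ref{prop:Hamiltonian}, and it is $(n-2)$-regular with $n-2\geq3$, so it is not a path. Hence $\reg(S_{G_n}/J_{G_n})\leq 2n-2$.

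The main obstacle is stating these external results precisely with correct citations, since the paper has not yet introduced them. In particular, I need to confirm that the characterization of equality in $\reg\leq |V|-1$ is stated for all graphs with $|V(G)|$ vertices, and not only in a form involving numbers of components. If that reference is unavailable, I will instead apply Ohtani's decomposition at an internal vertex $t$, together with the exact sequence
\[
0\to S/J\to S/J_{G_t}\oplus S/((x_t,y_t)+J_{G\setminus t})\to S/((x_t,y_t)+J_{G_t\setminus t})\to0.
\]
I would then bound each term inductively, using that $G_t$ has fewer internal vertices and that $G\setminus t$ has $2n-1$ vertices.
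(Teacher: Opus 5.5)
Your proposal is correct and matches the paper's proof: the lower bound comes from the Matsuda--Murai inequality $\ell(G)\leq\reg(S_G/J_G)$ applied to the length-$6$ induced path of Proposition~\ref{prop:longest-induced-path}, and the upper bound comes from the Kiani--Saeedi Madani theorem that $\reg(S_G/J_G)\leq |V(G)|-2$ whenever $G$ is not a path, with $|V(G_n)|=2n$. That theorem is exactly the result the paper invokes, so your Ohtani-decomposition fallback is not needed.
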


\begin{proof}
	By Proposition~\ref{prop:longest-induced-path} and
	\cite[Corollary~2.3]{MatsudaMurai},
	\[
	\reg(J_{G_n})\geq 6+1=7.
	\]
	Since $J_{G_n}\neq0$, \( 	\reg(S_{G_n}/J_{G_n})=\reg(J_{G_n})-1\geq6. \)
	On the other hand, $G_n$ is not a path and has $2n$ vertices.
	By \cite[Theorem~3.2]{RegularityUpper},
	\[
	\reg(J_{G_n})\leq2n-1,
	\]
	and hence \( \reg(S_{G_n}/J_{G_n})\leq2n-2. \)
\end{proof}

\end{document}